\documentclass[final,onefignum,onetabnum]{siamart220329}
\usepackage{braket,amsfonts}

\usepackage{array}
\usepackage{bm}
\usepackage{amsmath,amssymb}
\usepackage{mathtools}
\usepackage{stmaryrd}
\usepackage{blkarray}
\usepackage[caption=false]{subfig}
\captionsetup[subtable]{position=bottom}
\captionsetup[table]{position=bottom}

\usepackage{pgfplots}

\newsiamthm{claim}{Claim}
\newsiamremark{remark}{Remark}
\newsiamremark{hypothesis}{Hypothesis}
\newsiamremark{assumption}{Assumption}
\crefname{hypothesis}{Hypothesis}{Hypotheses}

\usepackage[noend]{algpseudocode}

\usepackage{graphicx,epstopdf}

\Crefname{ALC@unique}{Line}{Lines}
\allowdisplaybreaks

\usepackage{amsopn}

\DeclareMathOperator{\diag}{diag}
\DeclareMathOperator{\rk}{rank}


\newcommand{\R}{\mathbb{R}}

\newcommand{\proj}{\mathcal{P}}
\newcommand{\bigO}{\mathcal{O}}

\newcommand{\lowrank}[2]{\llbracket {#1} \rrbracket_{#2}}
\newcommand{\norm}[1]{\left\lVert#1\right\rVert}

\interfootnotelinepenalty=10000


\usepackage[normalem]{ulem}

\newcommand{\ignore}[1]{}

\usepackage{xspace}
\usepackage{bold-extra}
\usepackage[most]{tcolorbox}

\colorlet{texcscolor}{blue!50!black}
\colorlet{texemcolor}{red!70!black}
\colorlet{texpreamble}{red!70!black}
\colorlet{codebackground}{black!25!white!25}


\lstdefinestyle{siamlatex}{%
  style=tcblatex,
  texcsstyle=*\color{texcscolor},
  texcsstyle=[2]\color{texemcolor},
  keywordstyle=[2]\color{texemcolor},
  moretexcs={cref,Cref,maketitle,mathcal,text,headers,email,url},
}

\tcbset{%
  colframe=black!75!white!75,
  coltitle=white,
  colback=codebackground, 
  colbacklower=white, 
  fonttitle=\bfseries,
  arc=0pt,outer arc=0pt,
  top=1pt,bottom=1pt,left=1mm,right=1mm,middle=1mm,boxsep=1mm,
  leftrule=0.3mm,rightrule=0.3mm,toprule=0.3mm,bottomrule=0.3mm,
  listing options={style=siamlatex}
}

\newtcblisting[use counter=example]{example}[2][]{%
  title={Example~\thetcbcounter: #2},#1}

\newtcbinputlisting[use counter=example]{\examplefile}[3][]{%
  title={Example~\thetcbcounter: #2},listing file={#3},#1}

\DeclareTotalTCBox{\code}{ v O{} }
{ 
  fontupper=\ttfamily\color{black},
  nobeforeafter,
  tcbox raise base,
  colback=codebackground,colframe=white,
  top=0pt,bottom=0pt,left=0mm,right=0mm,
  leftrule=0pt,rightrule=0pt,toprule=0mm,bottomrule=0mm,
  boxsep=0.5mm,
  #2}{#1}

\patchcmd\newpage{\vfil}{}{}{}
\flushbottom


\begin{tcbverbatimwrite}{tmp_\jobname_header.tex}
\title{Accuracy and Stability of CUR decompositions with Oversampling
\thanks{Date: \today 
\funding{TP was supported by the Heilbronn Institute for Mathematical Research. YN is supported by EPSRC grants EP/Y010086/1 and EP/Y030990/1.}
}
}

\author{Taejun Park\thanks{Mathematical Institute, University of Oxford, Oxford, OX2 6GG, UK, (\email{park@maths.ox.ac.uk}, \email{nakatsukasa@maths.ox.ac.uk}).}
\and Yuji Nakatsukasa\footnotemark[2] }

\headers{Accuracy and Stability of CUR decompositions with Oversampling}{Taejun Park and Yuji Nakatsukasa}
\end{tcbverbatimwrite}
\input{tmp_\jobname_header.tex}

\ifpdf
\hypersetup{ pdftitle={Accuracy and Stability of CUR decompositions with Oversampling} }
\fi


\begin{document}
\maketitle

\begin{tcbverbatimwrite}{tmp_\jobname_abstract.tex}
\begin{abstract}
This work investigates the accuracy and numerical stability of CUR decompositions with oversampling. The CUR decomposition approximates a matrix using a subset of columns and rows of the matrix. When the number of columns and the rows are the same, the CUR decomposition can become unstable and less accurate due to the presence of the matrix inverse in the core matrix. Nevertheless, we demonstrate that the CUR decomposition can be implemented in a numerical stable manner and illustrate that oversampling, which increases either the number of columns or rows in the CUR decomposition, can enhance its accuracy and stability. Additionally, this work devises an algorithm for oversampling motivated by the theory of the CUR decomposition and the cosine-sine decomposition, whose competitiveness is illustrated through experiments. 
\end{abstract}

\begin{keywords}
Low-rank approximation, CUR decomposition, stability analysis, oversampling 
\end{keywords}

\begin{MSCcodes}
15A23, 65F55, 65G50
\end{MSCcodes}
\end{tcbverbatimwrite}
\input{tmp_\jobname_abstract.tex}

\section{Introduction} \label{sec:intro}
The computation of a low-rank approximation to a matrix is omnipresent in the computational sciences \cite{UdellTownsend2019}. It has seen increasing popularity due to its importance in tackling large scale problems. An important aspect of low-rank approximation is the selection of bases that approximate the span of a matrix's column and/or row spaces. In this work, we consider a natural choice of using a subset of rows and columns of the original matrix as low-rank bases, namely the CUR decomposition. The CUR decomposition \cite{BoutsidisWoodruff2017,MahoneyDrineas2009,SorensenEmbree2016}, also known as a ``matrix skeleton'' approximation \cite{GoreinovTyrtyshinikovZamarashkin1997}, is a low-rank approximation that approximates a matrix $A$ as a product\footnote{The notation $Z$ is temporarily used to denote the core matrix instead of the traditional notation $U$ as we use $U$ to denote the intersection of $C$ and $R$, i.e., $U = A(I,J)$ later.}
\begin{equation}
    \underset{m\times n}{A} \approx \underset{m\times k}{C} \; \underset{k\times k}{Z} \;\underset{k \times n}{R}
\end{equation}
 where in MATLAB notation, $C = A(:,J)$ is a $k$-subset of columns of $A$ with $J\subseteq \{1,\ldots,m\}$ being the column indices and $R = A(I,:)$ is a $k$-subset of the rows of $A$ with $I\subseteq \{1,\ldots,n\}$ being the row indices. The factors $C$ and $R$ are subsets of the original matrix $A$ that inherit certain properties of the original matrix, such as sparsity or nonnegativity, a property that is absent in the truncated SVD. They also assist with data interpretation by revealing the important rows and columns of $A$. The CUR decomposition is also memory efficient when the entries of $A$ can be computed or extracted quickly because we can just store the column and row indices without explicitly storing $C$ and $R$. 

There are two common choices for $Z$, which we refer to as the core matrix: $C^\dagger A R^\dagger$ or $A(I,J)^{-1}$ (or more generally $A(I,J)^\dagger$) \cite{HammHuang2020}. The choice $Z = C^\dagger AR^\dagger$ minimizes the Frobenius norm error 
$\|A-CZR\|_F$
given the choice of $C$ and $R$. Hence, we refer to the CUR decomposition with this choice as CURBA (CUR with Best Approximation) for shorthand in this work. While this choice is more robust, it involves the full matrix $A$ to form $C^\dagger A R^\dagger$, which can be costly, requiring $O(mnk)$ operations for a dense matrix $A$. On the other hand, the choice $Z = A(I,J)^{-1}$, which is often called the cross approximation \cite{CortinovisKressner2020}, is more efficient as we only require the overlapping entries of $C$ and $R$, and need not even read the whole matrix $A$. We refer to this version of the CUR decomposition as CURCA (CUR with Cross Approximation) for shorthand. However, this choice has the drawback that $A(I,J)$ can be (nearly) singular, which can lead to poor approximation error. Indeed, the ill-conditioning of $A(I,J)$ for CURCA becomes alarming when computing its matrix (pseudo)inverse, a concern that has been raised in various papers \cite{DongMartinsson2023,MartinssonTropp2020,SorensenEmbree2016}. 
For this reason, the related interpolative decomposition~\cite{martinsson2019randomized} is often advocated to avoid numerical instability with CUR, especially CURCA.
The aim of this paper is to address the instability of the CURCA by demonstrating that it can be implemented in a numerically stable manner using the $\epsilon$-pseudoinverse in the presence of roundoff errors. The $\epsilon$-pseudoinverse variant takes the $\epsilon$-pseudoinverse of $A(I,J)$ by first truncating the singular values of $A(I,J)$ less than $\epsilon$ before computing its pseudoinverse, which has been explored in a related context in \cite{CaiNagyXi2022,ChiuDemanet2013,Nakatsukasa2020,NakatsukasaPark2023Indef}. We abbreviate the CURCA with $\epsilon$-pseudoinverse as the stabilized CURCA (SCURCA for short). Furthermore, we illustrate that oversampling, which increases either the number of columns or rows in the CURCA, can be incorporated to enhance its accuracy and stability. Specifically, we recommend oversampling in proportion to the target rank; see Section \ref{subsec:oversampleNI}. Throughout this work, we concentrate on CURCA and take $U = A(I,J)$ unless otherwise stated, such that CURCA can be written as $CU^\dagger R$, but the analysis and the relevant counterparts regarding CURBA can be found in Appendix \ref{app:CURBA}.\footnote{For the CURBA, there already exists a numerically stable way of computing the CURBA given by the StableCUR algorithm in \cite{AndersonDuMahoneyMelgaardWuGu2015}. Furthermore, the effect of oversampling is less effective as the CURBA, $A\approx C(C^\dagger A R^\dagger) R$ is a more robust and stable (but expensive) approximation than the CURCA, $A \approx CU^\dagger R$; see Appendix \ref{app:CURBA}.}

\paragraph{Overview} In this work, we focus on the CURCA. Section \ref{sec:relatedwork} provides background information on the CUR decomposition, discusses related and existing works and outlines our contributions. In Section \ref{sec:analysis}, we prove a theoretical bound for the CURCA and its $\epsilon$-pseudoinverse variant, the SCURCA with oversampling and show that the SCURCA can be computed in a numerically stable way in the presence of roundoff errors.\footnote{The stability analysis of the plain CURCA, $CU^\dagger R$ is not covered in this work; in our analysis we require using the $\epsilon$-pseudoinverse. 
However, we observe its numerical stability in practice; see Section \ref{subsec:implementation}.} In Section \ref{sec:method}, we study oversampling of indices for the CURCA with the purpose of improving its accuracy and numerical stability based on the theory developed in Section \ref{sec:analysis}. The setting we are interested in is when we have row indices $I$ and column indices $J$ with $|I| = |J| = k$, how should we oversample one of $I$ or $J$. We address this question by proposing a deterministic way of oversampling for which the rationale for our idea can be explained by the cosine-sine (CS) decomposition. 
The construction could be of use in other contexts where (over)sampling from a subspace is desired; a common task that finds use, for example, in model reduction \cite{ChaturantabutSorenson2010} and active learning~\cite{shimizu2023improved}.
We conclude with numerical illustrations in Section \ref{sec:numill}, demonstrating the stability of our method in computing the CURCA and the strength of oversampling.

\paragraph{Notation} 
Throughout, we use $\norm{\cdot}_2$ for the spectral norm or the vector-$\ell_2$ norm, $\norm{\cdot}_F$ for the Frobenius norm and $\norm{\cdot}$ for any unitarily invariant norms. We use dagger $^\dagger$ to denote the pseudoinverse of a matrix and $\lowrank{A}{k}$ to denote the best rank-$k$ approximation to $A$ in any unitarily invariant norm, i.e., the approximation derived from truncated SVD~\cite{hornjohnson}. Unless specified otherwise, $\sigma_i(A)$ denotes the $i$th largest singular value of the matrix $A$. We use MATLAB style notation for matrices and vectors. For example, for the $k$th to $(k+j)$th columns of a matrix $A$ we write $A(:,k:k+j)$. We use $I$ and $J$ for the row and the column indices respectively and set $\Pi_I = I_m(:,I)$ and $\Pi_J = I_n(:,J)$ so that $A(:,J) = A\Pi_J$ and $A(I,:) = \Pi_I^T A$ for $A\in \R^{m\times n}$. Here, $I_m$ denotes the $m\times m$ identity matrix. Lastly, we use $|I|$ to denote the cardinality of the index set $I$ and define $[n]:=\{1,2,...,n\}$.


\section{Background, Related Work and Contributions} \label{sec:relatedwork}
In the CUR decomposition, it is important to get a good set of row and column indices as they dictate the quality of the low-rank approximation. There are many practical methods, but they largely fall into two different categories: pivoting or sampling. For pivoting based methods, we use pivoting schemes such as column pivoted QR (CPQR) \cite{GolubVanLoan2013} or LU with complete pivoting \cite{TrefethenBau} on $A$ or the singular vectors of $A$ to obtain the pivots which we then use as row or column indices. For example, these can be used on the dominant singular vectors of $A$ to obtain the pivots and the discrete empirical interpolation method (DEIM) is a popular example \cite{ChaturantabutSorenson2010,DrmacGugercin2016,SorensenEmbree2016}. There are also pivoting schemes that provide a strong theoretical guarantee \cite{GuEisenstat1996}. Applying pivoting schemes directly on $A$ can be prohibitive for large matrices, and for this reason a number of randomized algorithms based on \emph{sketching} have been proposed \cite{DongMartinsson2023,Duersch_Gu_SIREV,VoroninMartinsson2017}. These methods ``sketch'' the original matrix down to a smaller-sized matrix using randomization and perform the pivoting schemes there. See \cite{DongMartinsson2023} for a comparative study of randomized pivoting algorithms. On the other hand, for sampling based methods, we sample the column or row indices from some probability distribution obtained from certain information about $A$. For example, a popular choice is the row norms of the dominant singular vectors of $A$ for leverage scores \cite{DrineasMahoneyMuthukrishnan2008,MahoneyDrineas2009}. There are other sampling strategies such as uniform sampling \cite{ChiuDemanet2013}, volume sampling \cite{CortinovisKressner2020,DeshpandeRademacherVempalaWang2006,DeshpandeVempala2006,GoreinovTyrtyshinikovZamarashkin1997}, DPP sampling \cite{DerezinskiMahoney2021}, and BSS sampling \cite{BatsonSpielmanSrivastava2012,BoutsidisDrineasMagdon-Ismail2014}. In particular, volume sampling leads to a CUR approximation that have close-to-optimal error guarantees \cite{CortinovisKressner2020,ZamarashkinOsinsky2018}. Sampling based methods can also be prohibitive for large matrices, so a smaller-sized proxy of $A$ via sketching have been proposed \cite{DrineasMagdon-IsmailMahoneyWoodruff2012}. There is also a deterministic sampling method for leverage scores \cite{PapailiopoulosKyrillidis2014} and hybrid methods such as L-DEIM \cite{GidisuHochstenbach2022}.

The vast majority of CUR decompositions of $A$ comes with a theoretical guarantee that involves the term
\begin{equation} \label{eq:NBterm}
    \norm{V(J,:)^{-1}}_2 = \frac{1}{\sigma_{\min}(V(J,:))},
\end{equation} where $V\in \R^{n\times k}$ is the $k$ (approximate) dominant right singular vectors of $A \in \R^{m\times n}$ and $J \subset [n]$ with $|J| = k$ is the set of column indices; see Theorems \ref{thm:CAe} and \ref{thm:CURBA}. A similar term involving the left singular vectors and a row index set is also present. The term \eqref{eq:NBterm} is usually the deciding factor for the accuracy of the CUR decomposition and therefore, a majority of the algorithms aim at choosing a set of indices $J$ that would diminish the effect of \eqref{eq:NBterm}. A natural way of improving \eqref{eq:NBterm} is to \emph{oversample}, that is, obtain extra indices $J_0 \in \R^p$ distinct from $J$ so that $V(J\cup J_0,:) \in \R^{(k+p)\times k}$ becomes a rectangular matrix. By appending more rows to $V(J,:)$, $V(J\cup J_0,:)$ has a larger minimum singular value by the Courant-Fischer min-max theorem, which improves the accuracy of the CUR decomposition, as we will see in the later sections. The topic of oversampling is not new, in particular, it is shown in \cite{AndersonDuMahoneyMelgaardWuGu2015} that oversampling improves the accuracy of CURBA when the singular values decay rapidly. In the context of sampling based methods, oversampling has been suggested for theoretical guarantees; see e.g. \cite{BoutsidisWoodruff2017,ChiuDemanet2013,MahoneyDrineas2009}. It is easy to oversample for sampling based methods as we can simply sample more than $k$ indices from the given probability distribution. In contrast, it is often difficult to oversample for pivoting based methods. This is because we typically perform pivoting schemes on a smaller-sized surrogate $X\in \R^{n\times k}$, for example, the sketch of $A$ and the pivots beyond the first $k$ indices carry little to no information. Therefore, for pivoting based methods, usually a different strategy is used for oversampling. In the context of DEIM, various ways of oversampling have been suggested \cite{CarlbergFarhatCortialAmsallem2013,GidisuHochstenbach2022,PeherstorferDrmacGugercin2020,ZimmermannWillcox2016}. Specifically, Zimmermann and Willcox \cite{ZimmermannWillcox2016} show that oversampling can improve the condition number of oblique projections, and Donello et al. \cite{DonelloPalkarNaderiDelReyFernandezBabaee2023} proves a bound for DEIM projectors\footnote{DEIM projector is given by $\mathcal{P}_U = U(\Pi^T U)^\dagger \Pi^T$ where $U\in \R^{n\times k}$ is an orthonormal matrix and $\Pi \in \R^{n\times k}$ is a submatrix of the identity matrix that picks $k$ chosen rows.} with oversampling, which extends the proof without oversampling from \cite{SorensenEmbree2016}. A majority of the aforementioned literature focuses on the DEIM projector or the CURBA. However, oversampling can often be more effective for the CURCA as we not only improve the bound involving \eqref{eq:NBterm} (see Theorem \ref{thm:CAe}), we also make $A(I,J)$ a rectangular matrix when we oversample either $I$ or $J$ (but not both), which improves the condition number of $A(I,J)$ making the CURCA more accurate when computing the pseudoinverse of $A(I,J)$. Therefore, we advocate oversampling when possible over the standard choice $|I| = |J|$.

\paragraph{Existing work}
The numerical stability of the stabilized CURCA, $A\approx CU_{\epsilon}^\dagger R$, involving the $\epsilon$-pseudoinverse has not been studied previously to our knowledge. However, there are some related works exploring the idea of $\epsilon$-pseudoinverse in the core matrix. The $\epsilon$-pseudoinverse takes the core matrix $U = A(I,J)$, truncates its singular values that are less than $\epsilon$, and computes the pseudoinverse of the resulting matrix. Firstly, Chiu and Demanet \cite{ChiuDemanet2013} study the $\epsilon$-pseudoinverse in the context of the CURCA with uniform sampling and show that when the matrix is incoherent, the algorithm succeeds. However, this paper does not contain stability analysis, and has the condition that the matrix needs to be incoherent. The authors in \cite{CaiNagyXi2022,NakatsukasaPark2023Indef} explore the $\epsilon$-pseudoinverse in the context of the symmetric Nystr\"om method, $A\approx CU^\dagger_\epsilon C^T$, applied to symmetric indefinite matrices. However, they show that the $\epsilon$-pseudoinverse can deteriorate the accuracy of the symmetric Nystr\"om method when applied to symmetric indefinite matrices. Lastly, Nakatsukasa \cite{Nakatsukasa2020} studies the generalized Nystr\"om algorithm\footnote{The generalized Nystr\"om method is a variant of the CURCA where instead of subsets of rows and columns of $A$ that approximate the row and column space of $A$, we have the sketches, $Y^TA$ and $AX$ that approximate the row and column space of $A$ where $X$ and $Y$ are random embeddings. See \cite{Nakatsukasa2020,TroppYurtseverUdellCevher2017GenNys} for more details.} with $\epsilon$-pseudoinverse and oversampling, which provides a numerically stable way of computing the generalized Nystr\"om method and proves its stability. The paper also demonstrates that oversampling is necessary for a stable approximation in the generalized Nystr\"om method. 

In a related work, Hamm and Huang study the stability of sampling for CUR decompositions in \cite{HammHuang2020StabCUR} and the perturbation bounds of CUR decompositions in \cite{HammHuang2021PerturbCUR}. In \cite{HammHuang2020StabCUR}, they study the problem of determining when a column submatrix of a rank $k$ matrix $A$ also has rank $k$. This is important as if the chosen columns or rows of a matrix is (numerically) rank-deficient then $U$ is also (numerically) rank-deficient, which may cause numerical issues when computing the pseudoinverse of $U$. In \cite{HammHuang2021PerturbCUR}, they derive perturbation bounds for CUR decompositions under the influence of a noise matrix. They investigate several variants of the CUR decomposition including the $\epsilon$-pseudoinverse variant, SCURCA, and provide perturbation bounds in terms of the noise matrix. This work is related, but different from our work as they investigate the accuracy of the CUR decomposition for the perturbed matrix $\tilde{A} = A+E$, while we investigate the accuracy and stability for a numerical implementation of the CURCA in the presence of rounding errors.

We review two existing algorithms for oversampling. First, Gidisu and Hochstenbach \cite{GidisuHochstenbach2022} oversample $p$ extra indices by choosing the largest $p$ leverage scores (row norms of (approximate) dominant singular vectors) out of the unchosen indices. The complexity is $\bigO(nk)$ for computing the leverage scores of an orthonormal matrix $V\in \R^{n\times k}$. If we are only given an approximator that is not orthonormal then (approximate) orthonormalization needs to be done, usually at a cost of $\bigO(nk^2)$. Second, Peherstorfer, Drma\v{c} and Gugercin \cite{PeherstorferDrmacGugercin2020}, iteratively select $p$ extra indices for oversampling to maximize the minimum singular value of $V(J,:)$ in a greedy fashion. This approach, which is called the $\mathtt{GappyPOD\!+\!E}$ algorithm, uses perturbation bounds on the eigenvalues given in \cite{IpsenNadler2009} to find the next index that maximizes the lower bound for the minimum singular value of $V(J,:)$. This approach is also a special case of \cite{ZimmermannWillcox2016}. The algorithm runs with complexity $\bigO((k+p)^2k^2+nk^2p)$ where $p$ is the number of indices we oversample by. Again, if $V$ is not orthonormal to begin with, then (approximate) orthonormalization needs to be done usually at a cost of $\bigO(nk^2)$. For a treatment of approximate orthonormalization, see, for example \cite{avron10,BalabanovGrigori2022}.

\paragraph{Contributions}
Our contribution is twofold: (1) presenting a method for computing the CURCA in a numerically stable manner with a theoretical guarantee, and (2) advocating the use of oversampling to improve the accuracy and stability of the CURCA.

Our first and main contribution lies in presenting a method for computing the CURCA in a numerically stable manner, accompanied by an analysis that guarantees its stability. We show that with the $\epsilon$-pseudoinverse in the core matrix, the SCURCA, $A\approx CU_{\epsilon}^\dagger R$, can be computed in a numerically stable manner by taking the following steps. First, we compute each row of $C U_{\epsilon}^\dagger$ using a backward stable underdetermined linear solver. Then we compute the SCURCA by multiplying $CU_{\epsilon}^\dagger$ by $R$. See Section \ref{subsec:numstab} for details. In addition to the stability analysis, we also analyze the CURCA and its $\epsilon$-pseudoinverse variant, SCURCA in exact arithmetic by deriving a relative norm bound. While our analysis does not cover the stability of plain CURCA, $CU^\dagger R$, we observe its stability in practice without the $\epsilon$-truncation; see Sections \ref{subsec:numstab} and \ref{subsec:implementation}.

Our secondary contribution involves advocating the use of oversampling for the CURCA and for providing a deterministic algorithm to oversample row or column indices. We show that oversampling improves the accuracy and stability of the CURCA by providing a theoretical analysis that demonstrates the benefits of oversampling. We show that oversampling should be done such that it increases the minimum singular value(s) of $V(J,:)$ where $V\in \R^{n\times k}$ is the $k$ (approximate) dominant right singular vectors of $A$ and $J$ is a set of indices with $|J| = k$. Our algorithm is motivated by the cosine-sine (CS) decomposition and runs with complexity $\bigO(nk^2+nkp)$ 
where $k$ is the target rank and $p$ is the oversampling parameter. Note that this complexity only refers to the cost of the oversampling process, not the whole CUR process; the complexity of the initial CUR process is separate and depends on the method used to get the initial set of indices.
We show that our algorithm is competitive with existing algorithms and in particular, performs similarly to the $\mathtt{GappyPOD\!+\!E}$ algorithm in \cite{PeherstorferDrmacGugercin2020}, in which the oversampling process runs with complexity $\bigO((k+p)^2k^2+nk^2p)$. The numerical experiments illustrate that oversampling is recommended.

\section{Accuracy and stability of the stabilized CURCA} \label{sec:analysis}
In this section, we study two topics related to the CURCA, $A\approx CU^\dagger R$. We first analyze the accuracy of the CURCA, 
\begin{equation*}
    A_{IJ} = A(:,J)A(I,J)^\dagger A(I,:) = A\Pi_J (\Pi_I^TA\Pi_J)^\dagger \Pi_I^T A =: C U^\dagger R
\end{equation*}
and its $\epsilon$-pseudoinverse variant, the SCURCA,
\begin{equation*}
    A_{IJ}^{\epsilon} = A(:,J) A(I,J)_{\epsilon}^\dagger A(I,:) = A\Pi_J (\Pi_I^TA\Pi_J)_\epsilon^\dagger \Pi_I^T A = CU_\epsilon^\dagger R,
\end{equation*} and show that the $\epsilon$-truncation in the core matrix compromises the accuracy of the CURCA only by $\epsilon$ times the condition number of the CURCA; see Remark \ref{remark:CURCA}. We then analyze the numerical stability of SCURCA in the presence of roundoff errors and show that SCURCA satisfies a similar bound under roundoff errors, making the SCURCA numerically stable as long as the selected rows and columns well-approximate the dominant row and column spaces of $A$; see Section \ref{subsec:numstab}. 

We begin with some preliminaries: oblique projectors and standard assumptions. In the proofs below, we frequently use oblique projectors and their properties. We use $\proj_{X,Y} := X(Y^TX)^\dagger Y^T$ where $X\in \R^{n\times k}$ and $Y\in \R^{n\times \ell}$ to denote an oblique projection onto the column space of $X$ if $k\leq \ell$ and $Y^TX$ has full column rank or onto the row space of $Y^T$ if $\ell \leq k$ and $Y^TX$ has full row rank. For example, the CUR decomposition $A_{IJ}$ can be written as 
\begin{equation*}
    A_{IJ} = A\Pi_J (\Pi_I^TA\Pi_J)^\dagger \Pi_I^T A = \proj_{A\Pi_J,\Pi_I} A = A\proj_{\Pi_J,A^T\Pi_I}.
\end{equation*} Some of the important properties of projectors \cite{Szyld2006} are 
\begin{enumerate}
    \item $\proj_{X,Y}\proj_{X,Y} = \proj_{X,Y}$,
    \item $\proj_{X,Y}X = X$ if $Y^TX$ has full column rank,
    \item $Y^T\proj_{X,Y} = Y^T$ if $Y^TX$ has full row rank,
    \item $\norm{\proj_{X,Y}}_2 = \norm{I-\proj_{X,Y}}_2$ if $\proj_{X,Y} \neq 0, I$.
\end{enumerate} Lastly, sometimes we can simplify the norm of oblique projectors, which is given by the lemma below.
\begin{lemma} \label{lemma:obproj}
    Let $\proj_{X,Y} \in \R^{n\times n}$ be a projector where $X\in \R^{n\times k}$, $Y\in \R^{n\times \ell}$ and $Y^TX\in \R^{\ell\times k}$ all have full column rank (so $k\leq \ell$). Then
    \begin{equation}
        \norm{\proj_{X,Y}} = \norm{(Y^T Q_X)^\dagger Y^T}
    \end{equation} for any unitarily invariant norm $\norm{\cdot}$ where $Q_X$ is an orthonormal matrix spanning the columns of $X$.
\end{lemma}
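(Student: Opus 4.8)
The plan is to reduce the norm of the oblique projector $\proj_{X,Y} = X(Y^TX)^\dagger Y^T$ to a cleaner expression by writing $X = Q_X S$ for some invertible $S \in \R^{k\times k}$ (e.g.\ the $R$-factor of a QR decomposition of $X$, which is invertible since $X$ has full column rank). Substituting, $\proj_{X,Y} = Q_X S (Y^T Q_X S)^\dagger Y^T$. The key algebraic fact I would invoke is that since $S$ is invertible and $Y^TQ_X$ has full column rank (this follows from $Y^TX$ having full column rank together with $S$ invertible), the pseudoinverse factors as $(Y^TQ_X S)^\dagger = S^{-1}(Y^TQ_X)^\dagger$. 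This is the standard identity $(MN)^\dagger = N^\dagger M^\dagger$ in the special case where the right factor is square invertible — which always holds regardless of the other factor. Then the $S$ and $S^{-1}$ cancel, giving $\proj_{X,Y} = Q_X (Y^TQ_X)^\dagger Y^T$.

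Having arrived at $\proj_{X,Y} = Q_X (Y^TQ_X)^\dagger Y^T$, the final step is to strip off the leading $Q_X$ using unitary invariance of the norm. I would complete $Q_X$ to a full orthogonal matrix $[\,Q_X \ \ Q_X^\perp\,] \in \R^{n\times n}$ and observe that
\begin{equation*}
    [\,Q_X \ \ Q_X^\perp\,]^T \proj_{X,Y} = \begin{bmatrix} (Y^TQ_X)^\dagger Y^T \\ 0 \end{bmatrix},
\end{equation*}
since $(Q_X^\perp)^T Q_X = 0$. Left-multiplication by the orthogonal matrix $[\,Q_X \ \ Q_X^\perp\,]^T$ preserves any unitarily invariant norm, and appending a block of zero rows does not change it either, so $\norm{\proj_{X,Y}} = \norm{(Y^TQ_X)^\dagger Y^T}$, as claimed.

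I do not anticipate a genuine obstacle here; the only point requiring care is the justification of $(Y^TQ_X S)^\dagger = S^{-1}(Y^TQ_X)^\dagger$ and of $(Q_X S)(Q_X S)^{-1}$-style cancellations — one must be slightly careful because $(MN)^\dagger = N^\dagger M^\dagger$ fails for general rectangular $M,N$, so I would either appeal explicitly to the square-invertible special case or verify the four Moore–Penrose conditions directly, which is routine. One should also note that the hypotheses guarantee $\proj_{X,Y}\neq 0$, so the expression is nontrivial, though the identity holds regardless. An alternative, essentially equivalent route would be to take a thin QR of $Y^TQ_X$ as well and compute both sides explicitly, but the argument above via unitary invariance is the shortest.
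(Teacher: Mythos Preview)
Your proof is correct and follows essentially the same line as the paper's: write $X = Q_X R_X$ via thin QR, cancel the invertible triangular factor through the pseudoinverse, and drop the leading orthonormal $Q_X$ by unitary invariance. One minor caution: the identity $(MN)^\dagger = N^{-1}M^\dagger$ for square invertible $N$ does \emph{not} hold ``regardless of the other factor'' (e.g.\ $M=[1\ 1]$, $N=\diag(1,2)$), but it does hold here precisely because, as you already note, $Y^TQ_X$ has full column rank---so the argument goes through unchanged.
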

\begin{proof}
    Let $X = Q_X R_X$ be the thin QR decomposition of $X$. Then
    \begin{align*}
       \norm{\proj_{X,Y}} = \norm{X(Y^TX)^\dagger Y^T} = \norm{Q_XR_X(Y^TQ_XR_X)^\dagger Y^T} = \norm{(Y^TQ_X)^\dagger Y^T},
    \end{align*} since $Y^TQ_X\in \R^{\ell \times k}$ has full column rank and $R_X \in \R^{k\times k}$ is nonsingular as $Y^TX$ has full column rank.
\end{proof}

Now, we lay out some generic assumptions that hold in our theorems below. The assumptions are
\begin{assumption} \label{assumptions} \*
\begin{enumerate}
    \item $|I|=|J| = k \leq \rk(A)$ where $k$ is the target rank,
    \item $A(I,J)\in\R^{k\times k}$ is a non-singular matrix,
    \item $X(:,J) \in \R^{k\times k}$ has full row rank, where $X$ is (any) row space approximator of $A$.\footnote{The assumptions and the theorems in this section are stated in terms of row space approximators, but similar assumptions and theorems for column space approximators can be obtained, for example, by considering $A^T$ instead.}
\end{enumerate}
\end{assumption}
When $\rk(A)\leq k$ and $\rk(A(I,J)) = \rk(A)$, we have $A = A_{IJ}$ \cite{HammHuang2020}. Since $A(I,J)$ is assumed to be non-singular, $A(:,J)$ and $A(I,:)$ have full column and row rank, respectively.
Under Assumption \ref{assumptions}, by Lemma \ref{lemma:obproj},
\begin{equation*}
    \norm{C U^\dagger} =  \norm{Q_C(I,:)^\dagger}, \norm{U^\dagger R} = \norm{Q_R(J,:)^\dagger}, 
    \norm{X(J,:)^\dagger X} = \norm{Q_X(J,:)^\dagger}
\end{equation*} where $Q_C$, $Q_R$ and $Q_X$ are the orthonormal matrices spanning the columns of $C$, $R^T$ and $X^T$, respectively. We now prove the accuracy of the CURCA and its $\epsilon$-pseudoinverse variant, the SCURCA.

\subsection{Accuracy of CUR and its $\epsilon$-pseudoinverse variant} \label{subsec:CURanal}
We prove the accuracy of the SCURCA, $A_{IJ}^\epsilon$ first. The accuracy for the CURCA, $A_{IJ}$ follows by setting $\epsilon = 0$. The analysis presented in this section is a key contribution and plays an essential role for the stability analysis as we show that the SCURCA satisfies a similar bound under roundoff errors, establishing its numerical stability. The bound in Theorem \ref{thm:CAe} below somewhat resembles that in \cite{DrineasIpsen2019}.

\begin{theorem} \label{thm:CAe}
    Let $A\in \R^{m\times n}$ be a matrix, $I$ and $J$ be a set of row and column indices, respectively, with $|I| = |J| = k$, $\epsilon >0$ and $X \in \R^{k\times n}$ be any row space approximator of $A$. Then under Assumption \ref{assumptions},
    \begin{equation} \label{eq:OSThm}
         \norm{A-A_{I\cup I_0,J}^{\epsilon}} \leq \norm{Q_C(I\cup I_0,:)^\dagger}_2 \norm{Q_X(J,:)^{-1}}_2 \left(\norm{A-AX^\dagger X} + \norm{E}\right)
    \end{equation} for any unitarily invariant norm $\norm{\cdot}$ where $I_0$ is a set of extra row indices distinct from $I$ with $|I_0| = p$, and $E\in \R^{k_\epsilon \times k_\epsilon }$ is a matrix satisfying $\norm{E}_2 \leq \epsilon$ where $k_\epsilon \leq k$ is the number of singular values of $A(I\cup I_0,J)$ smaller than $\epsilon$.
\end{theorem}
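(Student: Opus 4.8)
The plan is to express the error as the residual $A-AX^\dagger X$ pre- and post-multiplied by two oblique projectors, plus a single remainder term of order $\epsilon$. Fix a thin QR factorization $C=Q_CR_C$, put $\widehat\Pi:=\Pi_{I\cup I_0}$ so that $W:=Q_C(I\cup I_0,:)=\widehat\Pi^TQ_C$ and $M:=A(I\cup I_0,J)=\widehat\Pi^TC=WR_C$, which has full column rank because $A(I,J)$ is nonsingular (Assumption \ref{assumptions}). Introduce the auxiliary matrix $B:=C\,X(:,J)^{-1}X$; here $X(:,J)\in\R^{k\times k}$ is invertible by Assumption \ref{assumptions}. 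Since $B=A\Pi_J X(:,J)^{-1}X=A\mathcal{Q}$ with $\mathcal{Q}:=\Pi_J X(:,J)^{-1}X=\proj_{\Pi_J,X^T}$ an oblique projector, and since $X\mathcal{Q}=(X\Pi_J)\,X(:,J)^{-1}X=X$, we get $X(I-\mathcal{Q})=0$ and hence the basic identity
\begin{equation*}
  A-B=A(I-\mathcal{Q})=(A-AX^\dagger X)(I-\mathcal{Q}).
\end{equation*}
Moreover $\|\mathcal{Q}\|_2=\|X(:,J)^{-1}X\|_2=\|Q_X(J,:)^{-1}\|_2$ by Lemma \ref{lemma:obproj}, and $\|I-\mathcal{Q}\|_2=\|\mathcal{Q}\|_2$ since $\|I-P\|_2=\|P\|_2$ for any nontrivial idempotent $P$ (property 4 above).

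Next I would unfold the $\epsilon$-pseudoinverse using the SVD $M=[\,U_1\ U_2\,]\diag(\Sigma_1,\Sigma_2)[\,V_1\ V_2\,]^T$, where $\Sigma_2\in\R^{k_\epsilon\times k_\epsilon}$ gathers the singular values of $M$ that fall below $\epsilon$, so that $\|\Sigma_2\|_2\le\epsilon$ and $M_\epsilon^\dagger=V_1\Sigma_1^{-1}U_1^T$. From $WR_CV_M=U_M\Sigma_M$ together with $W^\dagger W=I$ one obtains $R_CV_1=W^\dagger U_1\Sigma_1$ and $R_CV_2=W^\dagger U_2\Sigma_2$; substituting these yields the two facts I rely on: $\mathcal{P}^\epsilon:=CM_\epsilon^\dagger\widehat\Pi^T=Q_CW^\dagger U_1U_1^T\widehat\Pi^T$, which is idempotent (since $M_\epsilon^\dagger M M_\epsilon^\dagger=M_\epsilon^\dagger$) with $\|\mathcal{P}^\epsilon\|_2\le\|W^\dagger\|_2=\|Q_C(I\cup I_0,:)^\dagger\|_2$, and $CV_2=Q_CW^\dagger U_2\Sigma_2$. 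Using also $M_\epsilon^\dagger M=I-V_2V_2^T$ and the splitting $\widehat\Pi^TA=\widehat\Pi^T(A-B)+M\,X(:,J)^{-1}X$ inside $A_{I\cup I_0,J}^\epsilon=CM_\epsilon^\dagger\widehat\Pi^TA$ produces the identity
\begin{equation*}
  A-A_{I\cup I_0,J}^\epsilon=(I-\mathcal{P}^\epsilon)(A-B)+CV_2V_2^T\,X(:,J)^{-1}X.
\end{equation*}

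The last step is to bound the two terms by sandwiching with spectral-norm factors, using $\|RST\|\le\|R\|_2\|S\|\|T\|_2$ for any unitarily invariant norm. For the first term, substitute $A-B=(A-AX^\dagger X)(I-\mathcal{Q})$ and bound by $\|I-\mathcal{P}^\epsilon\|_2\,\|A-AX^\dagger X\|\,\|I-\mathcal{Q}\|_2\le\|Q_C(I\cup I_0,:)^\dagger\|_2\,\|Q_X(J,:)^{-1}\|_2\,\|A-AX^\dagger X\|$. For the second term, write $CV_2V_2^T\,X(:,J)^{-1}X=Q_CW^\dagger U_2\,\Sigma_2\,V_2^T\,X(:,J)^{-1}X$ and bound by $\|Q_CW^\dagger U_2\|_2\,\|\Sigma_2\|\,\|V_2^T X(:,J)^{-1}X\|_2\le\|Q_C(I\cup I_0,:)^\dagger\|_2\,\|Q_X(J,:)^{-1}\|_2\,\|\Sigma_2\|$; taking $E:=\Sigma_2\in\R^{k_\epsilon\times k_\epsilon}$, which satisfies $\|E\|_2\le\epsilon$, and adding the two bounds gives the theorem. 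The degenerate cases $k_\epsilon\in\{0,k\}$ and those in which one of the projectors equals $0$ or $I$ are checked by inspection and cause no harm, since $\|Q_C(I\cup I_0,:)^\dagger\|_2\ge1$ and $\|Q_X(J,:)^{-1}\|_2\ge1$.

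I expect the identity in the second paragraph to be the crux. The $\epsilon$-truncation does not commute with the factorization $M=WR_C$, so $M_\epsilon^\dagger\ne R_C^{-1}W_\epsilon^\dagger$, and comparing $A_{I\cup I_0,J}^\epsilon$ directly with the untruncated $A_{I\cup I_0,J}$ is fruitless because $\|M^\dagger-M_\epsilon^\dagger\|_2$ is of order $1/\sigma_k(M)$ and can be enormous. The point is to route $M_\epsilon^\dagger$ through $C=Q_CR_C$ via $R_CV_1=W^\dagger U_1\Sigma_1$ and $R_CV_2=W^\dagger U_2\Sigma_2$, after which each dangerous $\Sigma_2^{-1}$ cancels against a $\Sigma_2$ and the surviving remainder has honest size $\|\Sigma_2\|\le\epsilon$. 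With that cancellation in place, the two-sided projector estimates are routine.
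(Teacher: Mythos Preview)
Your proof is correct and follows essentially the same route as the paper's: both arrive at the decomposition $A-A_{I\cup I_0,J}^\epsilon=(I-\mathcal{P}^\epsilon)\,A(I-\mathcal{Q})+CV_2V_2^T(X\Pi_J)^\dagger X$, then bound the two pieces via the projector norm identity $\|I-P\|_2=\|P\|_2$, Lemma~\ref{lemma:obproj}, and the key factorization $CV_2=Q_CW^\dagger U_2\Sigma_2$ (using your notation $W=Q_C(I\cup I_0,:)$). Your presentation front-loads the relation $R_CV_i=W^\dagger U_i\Sigma_i$ from $W^\dagger W=I$, which makes both the bound $\|\mathcal{P}^\epsilon\|_2\le\|W^\dagger\|_2$ and the second-term estimate drop out more directly than in the paper, where these calculations are done inline; but the underlying argument is the same.
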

\begin{proof}
    For shorthand, let $I_* :=I\cup I_0$. Let the thin SVD of $A(I_*,J)\in \R^{(k+p)\times k} $ be 
        $W \Sigma V^T = [W_1,W_2] \diag(\Sigma_1,\Sigma_2) [V_1,V_2]^T$ where $\Sigma_2$ contains the singular values of $A(I_*,J)$ smaller than $\epsilon$. Then
        \begin{align*}
            A_{I_*J}^\epsilon \Pi_J &= A\Pi_J (\Pi_{I_*}^TA\Pi_J)_\epsilon^\dagger \Pi_{I_*}^TA\Pi_J 
            = A\Pi_J V_1 \Sigma_1^{-1} W_1^T W\Sigma V^T \\
            &= A\Pi_J V_1V_1^T 
            = A\Pi_J - A\Pi_J V_2 V_2^T.
        \end{align*}
    Therefore,
    \begin{align} \label{eq:OSthmErr}
        A-A_{I_* J}^\epsilon &= \left(I-A\Pi_J\left(\Pi_{I_*}^TA\Pi_J\right)_\epsilon^\dagger \Pi_{I_*}^T\right)A \notag \\
        &= (I-A\Pi_J(\Pi_{I_*}^TA\Pi_J)_\epsilon^\dagger \Pi_{I_*}^T)A(I-\Pi_J(X\Pi_J)^\dagger X) +A\Pi_J V_2V_2^T(X\Pi_J)^\dagger X.
    \end{align} 
    Note that $\proj_{A\Pi_J,\Pi_{I_*}}^\epsilon := A\Pi_J(\Pi_{I_*}^TA\Pi_J)_\epsilon^\dagger \Pi_{I_*}^T$ is an oblique projector since
    \begin{align*}
(\proj_{A\Pi_J,\Pi_{I_*}}^\epsilon)^2
 &= A\Pi_J V_1 \Sigma_1^{-1} W_1^T W\Sigma V^T V_1 \Sigma_1^{-1} W_1^T \Pi_{I_*}^T \\
        &= A\Pi_J V_1 \Sigma_1^{-1} W_1^T \Pi_{I_*}^T \\
        &= \proj_{A\Pi_J,\Pi_{I_*}}^\epsilon
    \end{align*} and similarly, $\proj_{\Pi_J,X^T} = \Pi_J(X\Pi_J)^\dagger X$ is an oblique projector.
    Now bounding the first term of \eqref{eq:OSthmErr} gives 
    \begin{align*}
        \norm{(I-\proj_{A\Pi_J,\Pi_{I_*}}^\epsilon)A(I-\proj_{\Pi_J,X^T})} &\leq \norm{I-\proj_{A\Pi_J,\Pi_{I_*}}^\epsilon}_2 \norm{A(I-\proj_{\Pi_J,X^T})} \\
        &= \norm{\proj_{A\Pi_J,\Pi_{I_*}}^\epsilon}_2 \norm{A(I-X^\dagger X)(I-\proj_{\Pi_J,X^T})} \\
        &\leq \norm{\proj_{A\Pi_J,\Pi_{I_*}}^\epsilon}_2 \norm{A(I-X^\dagger X)} \norm{I-\proj_{\Pi_J,X^T}}_2 \\
        &= \norm{\proj_{A\Pi_J,\Pi_{I_*}}^\epsilon}_2 \norm{\proj_{\Pi_J,X^T}}_2\norm{A(I-X^\dagger X)} 
    \end{align*} where in the first equality we used $X\proj_{\Pi_J,X^T} = X\Pi_J (X\Pi_J)^\dagger  = I$, as $X\Pi_J = X(:,J)$ has full row rank by Assumption \ref{assumptions}. The first term $\norm{\proj_{A\Pi_J,\Pi_{I_*}}^\epsilon}_2$ in the final expression can be bounded by letting $A\Pi_J = Q_CR_C$ be the thin QR decomposition and noting that $\Pi_{I_*}^TQ_C$ has full column rank, as
    \begin{align*}
        \norm{\proj_{A\Pi_J,\Pi_{I_*}}^\epsilon}_2 &= \norm{Q_C R_C(\Pi_{I_*}^TA\Pi_J)_\epsilon^\dagger}_2 = \norm{R_C (\Pi_{I_*}^TA\Pi_J)_\epsilon^\dagger}_2 \\
        &= \norm{(\Pi_{I_*}^TQ_C)^\dagger \Pi_{I_*}^TQ_C R_C (\Pi_{I_*}^TA\Pi_J)_\epsilon^\dagger}_2 \\
        &\leq  \norm{(\Pi_{I_*}^TQ_C)^\dagger}_2\norm{\Pi_{I_*}^TA\Pi_J (\Pi_{I_*}^TA\Pi_J)_\epsilon^\dagger}_2 \\
        &\leq \norm{(\Pi_{I_*}^TQ_C)^\dagger}_2.
    \end{align*}
    Therefore, using Lemma \ref{lemma:obproj} on $\proj_{\Pi_J,X^T}$, the first term in \eqref{eq:OSthmErr} can be bounded as
    \begin{equation*}
         \norm{(I-\proj_{A\Pi_J,\Pi_{I_*}}^\epsilon)A(I-\proj_{\Pi_J,X^T})} \leq \norm{Q_C(I_*,:)^\dagger}_2 \norm{Q_X(J,:)^{-1}}_2 \norm{A(I-X^\dagger X)}.
    \end{equation*}
    The second term in \eqref{eq:OSthmErr} can be bounded using a similar argument as 
    \begin{align*}
        \norm{A\Pi_J V_2V_2^T(X\Pi_J)^\dagger X} &= \norm{Q_CR_C V_2V_2^T (X\Pi_J)^\dagger X} 
        = \norm{R_C V_2V_2^T (X\Pi_J)^\dagger X} \\
        &= \norm{(\Pi_{I_*}^TQ_C)^\dagger \Pi_{I_*}^TQ_C R_C V_2V_2^T (X\Pi_J)^\dagger X} \\
        &= \norm{(\Pi_{I_*}^TQ_C)^\dagger W_2\Sigma_2V_2^T (X\Pi_J)^\dagger X} \\
        &\leq \norm{(\Pi_{I_*}^TQ_C)^\dagger}_2 \norm{W_2\Sigma_2V_2^T} \norm{(X\Pi_J)^\dagger X}_2 \\
        &=\norm{Q_C(I_*,:)^\dagger}_2 \norm{Q_X(J,:)^{-1}}_2 \norm{\Sigma_2}.
    \end{align*}
    Putting everything together and letting $E = \Sigma_2$, we get the desired result.
\end{proof}
\begin{corollary} \label{cor:CAe}
    Under the same assumptions as in Theorem \ref{thm:CAe},
    \begin{equation} \label{eq:OSCor}
         \norm{A-A_{I\cup I_0,J}} \leq \norm{Q_C(I\cup I_0,:)^\dagger}_2 \norm{Q_X(J,:)^{-1}}_2 \norm{A-AX^\dagger X}
    \end{equation} for any unitarily invariant norm $\norm{\cdot}$.
\end{corollary}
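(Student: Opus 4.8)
The plan is to obtain the corollary directly from Theorem~\ref{thm:CAe} by sending the truncation level $\epsilon$ to zero. Under Assumption~\ref{assumptions}, $A(I,J)$ is non-singular, and it sits as a $k\times k$ block inside $A(I\cup I_0,J)\in\R^{(k+p)\times k}$; hence $A(I\cup I_0,J)$ has full column rank and $\sigma_{\min}(A(I\cup I_0,J)) =: \sigma_\star > 0$. First I would fix any $\epsilon$ with $0<\epsilon<\sigma_\star$. For such an $\epsilon$, no singular value of $A(I\cup I_0,J)$ lies below $\epsilon$, so in the notation of the theorem $k_\epsilon = 0$: the trailing block $\Sigma_2$ (equivalently $V_2$, $W_2$) is empty, the error matrix $E=\Sigma_2$ is the empty matrix with $\norm{E}=0$, and the $\epsilon$-pseudoinverse $(\Pi_{I\cup I_0}^T A\Pi_J)_\epsilon^\dagger$ coincides with the ordinary pseudoinverse $(\Pi_{I\cup I_0}^T A\Pi_J)^\dagger$. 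Consequently $A_{I\cup I_0,J}^\epsilon = A_{I\cup I_0,J}$.

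With this in hand I would simply invoke the bound \eqref{eq:OSThm}: its right-hand side becomes $\norm{Q_C(I\cup I_0,:)^\dagger}_2\,\norm{Q_X(J,:)^{-1}}_2\bigl(\norm{A-AX^\dagger X}+0\bigr)$, which is precisely \eqref{eq:OSCor}. Since the left-hand side $\norm{A-A_{I\cup I_0,J}}$ is independent of $\epsilon$, the inequality holds as stated. Alternatively, and perhaps more transparently, one can re-run the proof of Theorem~\ref{thm:CAe} verbatim with $\epsilon=0$: then $V=V_1$, the block $V_2$ is absent, the identity $A_{I\cup I_0,J}\Pi_J = A\Pi_J V_1V_1^T = A\Pi_J$ holds (CURCA reproduces the selected columns), the splitting \eqref{eq:OSthmErr} loses its second summand, and the estimate on the first summand is unchanged, yielding \eqref{eq:OSCor} directly.

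There is essentially no obstacle here; the corollary is the $\epsilon=0$ specialization of the theorem. The only point worth a sentence of care is the existence of a valid $\epsilon$ and the claim $(\cdot)_\epsilon^\dagger = (\cdot)^\dagger$ on this range, both of which follow immediately from the full column rank of $A(I\cup I_0,J)$, itself a consequence of the non-singularity of $A(I,J)$ in Assumption~\ref{assumptions}.
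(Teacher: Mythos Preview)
Your proposal is correct and matches the paper's own proof, which simply reads ``Set $\epsilon = 0$ in Theorem~\ref{thm:CAe}.'' Your version is in fact slightly more careful, since you justify why one may take $\epsilon$ below $\sigma_{\min}(A(I\cup I_0,J))>0$ (using the non-singularity of $A(I,J)$) so that $k_\epsilon=0$ and $A_{I\cup I_0,J}^\epsilon=A_{I\cup I_0,J}$; the paper leaves this implicit.
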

\begin{proof}
    Set $\epsilon = 0$ in Theorem \ref{thm:CAe}.
\end{proof}

\begin{remark}\* \label{remark:CURCA}
\begin{enumerate}
    \item The condition number of the CURCA is $\kappa=\norm{Q_C(I\cup I_0,:)^\dagger}_2 \norm{Q_X(J,:)^{-1}}_2$, as indicated by~\eqref{eq:OSThm}. 
Theorem \ref{thm:CAe} tells us that the SCURCA, $A_{I_*J}^\epsilon$ is 
worse than the CURCA by at most a factor $\kappa\sqrt{k}\epsilon$ in the Frobenius norm.
    \item The bound in Theorem \ref{thm:CAe} and Corollary \ref{cor:CAe} has two factors involving the (pseudo)inverse, which is in contrast to the CURBA, $C(C^\dagger A R^\dagger) R$ (see Appendix \ref{app:CURBA}) having only one factor. This makes the CURCA, $A_{IJ}$ usually worse than the CURBA, which is expected as the CURCA is cheaper to compute. However, the CURCA can still be very accurate; see for example \cite{CortinovisKressner2020,ZamarashkinOsinsky2018}, which establishes the existence of a rank-$r$ CURCA that has error within a factor $r+1$ of the best rank-$r$ approximation via the truncated SVD. The second multiplicative factor in Theorem \ref{thm:CAe} and Corollary \ref{cor:CAe} comes from the fact that $A_{IJ}$ is associated with the oblique projector $\proj_{A\Pi_J,\Pi_I}$ rather than the orthogonal projectors $CC^\dagger$ and $R^\dagger R$ for the CURBA. Nonetheless, the CURCA and the CURBA have comparable accuracy when employed with good row and column indices and oversampling, with the CURCA being much more computationally efficient.
    \item The row space approximator $X\in \R^{k\times n}$ can be chosen in various ways, for example, the $k$-dominant right singular vectors of $A$ or the row sketch of $A$. The bounds in Theorem \ref{thm:CAe} and Corollary \ref{cor:CAe} can both be computed a posteriori when $X$ can be computed easily. The $k$-dominant right singular vectors would make the right-most term, $\norm{A-AX^\dagger X}$ in the bound of Theorem \ref{thm:CAe} and Corollary \ref{cor:CAe}, optimal. However, the singular vectors are often too expensive to compute.
\end{enumerate}
\end{remark}

Theorem \ref{thm:CAe} and Corollary \ref{cor:CAe} provide a bound for the SCURCA and the CURCA, respectively. It also demonstrates the benefit of oversampling through the extra set of row indices $I_0$. We obtain a pseudoinverse for $\norm{Q_C(I\cup I_0,:)^\dagger}_2$ instead of the matrix inverse $\norm{Q_C(I,:)^{-1}}_2$ with the former always being smaller. Additionally, since $\norm{Q_C(I\cup I_0,:)^\dagger}_2 = \norm{A(:,J)A(I_*,J)^\dagger}_2$ and we want to minimize this quantity, the row indices $I$ and $I_0$ should be chosen in terms of the already-chosen columns of $A$. This has been suggested and employed in other works such as \cite{CortinovisYing2024,DongMartinsson2023,VoroninMartinsson2017,Xia2024}. This comes with the benefit that the resulting core matrix $A(I_*,J)$ will generally be better-conditioned, improving the accuracy of the CURCA. For example, consider the following $2\times 2$ matrix, 
\begin{equation*}
    A = \begin{bmatrix}
        \epsilon & 1 \\
        1 & 0
    \end{bmatrix}
\end{equation*} where $0<\epsilon<1$. For a rank-$1$ approximation of $A$, we need to choose a column and a row for the CURCA. If we choose a column and a row separately in the best possible way, we would choose the first row and the first column, giving us
\begin{equation*}
    A_{1,1} = \begin{bmatrix}
        \epsilon \\ 1
    \end{bmatrix} \epsilon^{-1} \begin{bmatrix}
        \epsilon & 1
    \end{bmatrix} = \begin{bmatrix}
        \epsilon & 1 \\
        1 & 1/\epsilon
    \end{bmatrix},
\end{equation*}which is a poor approximation as $\norm{A-A_{1,1}}_F = 1/\epsilon$ can be arbitrary large\footnote{The accuracy of CURBA, by contrast, is good as long as the chosen columns and rows are good approximators for the range and co-range of $A$~\cite[Remark 1]{DongMartinsson2023}.} as $\epsilon \rightarrow 0$. On the other hand, if we choose a column first and then a row, we choose the first column $[\epsilon,1]^T$ and the second row as $\epsilon<1$, giving us
\begin{equation*}
    A_{2,1} = \begin{bmatrix}
        \epsilon \\ 1
    \end{bmatrix} 1^{-1} \begin{bmatrix}
        1 & 0
    \end{bmatrix} = \begin{bmatrix}
        \epsilon & 0 \\
        1 & 0
    \end{bmatrix},
\end{equation*} which is a reasonable approximation as $\norm{A-A_{2,1}}_F = 1 \approx \sigma_{2}(A) \approx 1-\epsilon/2$. 

The significance of controlling the $\norm{Q_C(I,:)^{-1}}_2$ term will also be highlighted when we analyze the numerical stability of the CUR decomposition in the subsequent section. 

\subsection{Numerical Stability of the CUR decomposition} \label{subsec:numstab}
In the absence of rounding errors, the error for the CURCA can be bounded by Theorem \ref{thm:CAe} and Corollary \ref{cor:CAe}. In this section, we derive an error bound that accounts for rounding errors. 

We use the standard model of floating-point arithmetic as in \cite[Section 2.2]{HighamStabilityBook}:
\begin{equation*}
    fl(x \,\, \mathrm{op} \,\, y) = (x \,\, \mathrm{op}\, \, y)(1+\delta), \, |\delta| \leq u
\end{equation*} where $\mathrm{op} \in \{+,-,*,/\}$ is the basic arithmetic operations and $u \ll 1$, the unit roundoff, is the precision at which the computations are performed. We use $fl(\cdot)$ and $\widehat{\cdot}$ to denote the computed value of the expression. We define
$\gamma := p(m,n,k) u$ where $p(m,n,k)$ is a low-degree polynomial in $m,n$ and $k$, and use $\gamma$ to suppress any constant factors and terms related to the size of the matrix and the target rank, e.g., $\sqrt{m},\sqrt{n}$ and $k$, but not $\sigma_i(A)$ or $1/\epsilon$. While this may appear as an oversimplication, this approach is standard practice in stability analysis; see e.g. \cite{Nakatsukasa2020,NakatsukasaHigham2012}. 

In this section, we denote the $i$th row of a matrix $B$ as $[B]_i$ and we use $C = A(:,J)$, $U = A(I_*,J)$ and $R = A(I,:)$ for shorthand. We assume that the rows are oversampled, i.e., $I_* = I\cup I_0$ is such that the number of the oversampling indices $I_0$ is bounded by a constant times $k$ where $k = |I| = |J|$ is the target rank, and the truncation parameter $\epsilon$ satisfies $\norm{A}_2 \gg \epsilon > \gamma\norm{A}_2$. We begin by stating two lemmas that will be used in the CURCA stability analysis. Lemma \ref{lemma:NB1} proves the perturbation bound for the projector $CU_\epsilon^\dagger \Pi_J$ when only $C$ and $U$ get perturbed and in Lemma \ref{lemma:NB2}, we prove that under perturbation on $C$ and $U$, $\tilde{C}\tilde{U}_\epsilon^\dagger \Pi_J$ approximately projects $C$ onto itself. The proofs for the two lemmas can be found in Appendix \ref{app:lemmas}.
\begin{lemma} \label{lemma:NB1}
    Under Assumption \ref{assumptions}, for any $\Delta C$ and $\Delta U$,
    \begin{equation}
        \norm{(C+\Delta C)(U+\Delta U)_\epsilon^\dagger}_2 \leq \norm{Q_C(I_*,:)^\dagger}_2 \left(1 + \frac{1}{\epsilon}\norm{\Delta U}_2 \right) + \frac{1}{\epsilon}\norm{\Delta C}_2.
    \end{equation}
\end{lemma}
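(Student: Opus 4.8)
The plan is to reduce the bound to a single structural identity relating $C$, $U$, and the orthonormal factor $Q_C$, and then close with the triangle inequality and two elementary facts about the $\epsilon$-pseudoinverse. First I would write $C = Q_C R_C$ for the thin QR factorization and note that $U = A(I_*,J) = \Pi_{I_*}^T A\Pi_J = \Pi_{I_*}^T C$. Under Assumption \ref{assumptions} the square block $\Pi_I^T C = A(I,J)$ is nonsingular and $R_C$ is invertible, so $\Pi_{I_*}^T Q_C = Q_C(I_*,:)$ has full column rank and $(\Pi_{I_*}^T Q_C)^\dagger(\Pi_{I_*}^T Q_C)=I_k$; hence
\[
C = Q_C R_C = Q_C(\Pi_{I_*}^T Q_C)^\dagger(\Pi_{I_*}^T Q_C R_C) = Q_C(\Pi_{I_*}^T Q_C)^\dagger U.
\]

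Using this identity I would substitute $C = Q_C(\Pi_{I_*}^T Q_C)^\dagger\big((U+\Delta U)-\Delta U\big)$ into $(C+\Delta C)(U+\Delta U)_\epsilon^\dagger$, obtaining the three-term splitting
\[
(C+\Delta C)(U+\Delta U)_\epsilon^\dagger = Q_C(\Pi_{I_*}^T Q_C)^\dagger (U+\Delta U)(U+\Delta U)_\epsilon^\dagger - Q_C(\Pi_{I_*}^T Q_C)^\dagger \Delta U\,(U+\Delta U)_\epsilon^\dagger + \Delta C\,(U+\Delta U)_\epsilon^\dagger.
\]
Because $Q_C$ has orthonormal columns, $\norm{Q_C(\Pi_{I_*}^T Q_C)^\dagger}_2 = \norm{(\Pi_{I_*}^T Q_C)^\dagger}_2 = \norm{Q_C(I_*,:)^\dagger}_2$. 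The two facts about the $\epsilon$-pseudoinverse that I need are: (i) $\norm{(U+\Delta U)_\epsilon^\dagger}_2 \le 1/\epsilon$, immediate from the definition since every retained singular value of the truncation is at least $\epsilon$; and (ii) $\norm{(U+\Delta U)(U+\Delta U)_\epsilon^\dagger}_2 \le 1$, since writing the SVD $U+\Delta U = W\Sigma V^T$ and keeping the block with $\Sigma_1 \ge \epsilon$ gives $(U+\Delta U)(U+\Delta U)_\epsilon^\dagger = W_1 W_1^T$, an orthogonal projector. Applying these together with submultiplicativity of the spectral norm and the triangle inequality to the three terms gives exactly
\[
\norm{(C+\Delta C)(U+\Delta U)_\epsilon^\dagger}_2 \le \norm{Q_C(I_*,:)^\dagger}_2\Big(1+\tfrac1\epsilon\norm{\Delta U}_2\Big) + \tfrac1\epsilon\norm{\Delta C}_2,
\]
as claimed.

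There is no substantive obstacle here; the one point needing care is the justification of the identity $C = Q_C(\Pi_{I_*}^T Q_C)^\dagger U$, namely that $\Pi_{I_*}^T Q_C$ has full column rank so that its pseudoinverse acts as a genuine left inverse. This is precisely where the nonsingularity of $A(I,J)$ from Assumption \ref{assumptions} enters, and it is anyway implicit in the statement, since $\norm{Q_C(I_*,:)^\dagger}_2$ appears on the right-hand side. A minor secondary check is that the $\epsilon$-pseudoinverse of $U+\Delta U$ is taken relative to the same threshold $\epsilon$ throughout, so that (i) and (ii) hold verbatim.
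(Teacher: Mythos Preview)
Your proof is correct and follows essentially the same approach as the paper: the thin QR factorization $C=Q_CR_C$, the identity $(\Pi_{I_*}^TQ_C)^\dagger(\Pi_{I_*}^TQ_C)=I_k$ (used to express $C$ in terms of $U$), and the two elementary bounds $\norm{(U+\Delta U)_\epsilon^\dagger}_2\le 1/\epsilon$ and $\norm{(U+\Delta U)(U+\Delta U)_\epsilon^\dagger}_2\le 1$. The only cosmetic difference is that you substitute $U=(U+\Delta U)-\Delta U$ before splitting into three terms, whereas the paper first separates the $\Delta C$ contribution and then bounds $\norm{U(U+\Delta U)_\epsilon^\dagger}_2$ at the end; the ingredients and the resulting bound are identical.
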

\begin{lemma} \label{lemma:NB2}
    Under Assumption \ref{assumptions}, for any $\Delta C$ and $\Delta U$,
    \begin{equation}
        (C+\Delta C)(U+\Delta U)_\epsilon^\dagger R\Pi_J = C + E_*
    \end{equation} where 
    \begin{align*}
         \norm{E_*}_2 \leq \norm{(Q_C(I_*,:)^\dagger}_2 \left(\epsilon  + 2\norm{\Delta U}_2 + \frac{1}{\epsilon}\norm{\Delta U}_2^2\right) +  \norm{\Delta C}_2 \left(1+\frac{\norm{\Delta U}_2}{\epsilon}\right).
     \end{align*}
\end{lemma}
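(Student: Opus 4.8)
The plan is to expand the product $(C+\Delta C)(U+\Delta U)_\epsilon^\dagger R\Pi_J$ around the unperturbed quantity $CU_\epsilon^\dagger R\Pi_J$ and control each correction term using Lemma \ref{lemma:NB1} together with the structural facts already available for the SCURCA. First I would observe that, since $R\Pi_J = A(I,:) \Pi_J = A(I,J)$ and (after oversampling) $U = A(I_*,J)$ contains $A(I,J)$ as a row-submatrix, the product $U_\epsilon^\dagger R\Pi_J = U_\epsilon^\dagger \Pi_{I}^{\text{loc}} U$ where $\Pi_I^{\text{loc}}$ selects the rows of $U$ corresponding to $I$; in the exact/unperturbed case this is the oblique projector identity used in the proof of Theorem \ref{thm:CAe}, giving $CU_\epsilon^\dagger R\Pi_J = CV_1V_1^T = C - CV_2V_2^T$ with $\|CV_2V_2^T\|_2 = \|Q_C(I_*,:)^\dagger\|_2\|\Sigma_2\|_2 \le \|Q_C(I_*,:)^\dagger\|_2\,\epsilon$, exactly as in Theorem \ref{thm:CAe}. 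So the unperturbed term already equals $C$ up to an error of size $\|Q_C(I_*,:)^\dagger\|_2\,\epsilon$, which is the $\epsilon$-contribution in the stated bound.

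Next I would write
\begin{align*}
    (C+\Delta C)(U+\Delta U)_\epsilon^\dagger R\Pi_J &= \underbrace{C(U+\Delta U)_\epsilon^\dagger R\Pi_J}_{\text{(i)}} + \underbrace{\Delta C\,(U+\Delta U)_\epsilon^\dagger R\Pi_J}_{\text{(ii)}},
\end{align*}
and handle the two pieces separately. For (ii) I would bound $\|\Delta C\|_2 \|(U+\Delta U)_\epsilon^\dagger\|_2 \|R\Pi_J\|_2$; here $\|(U+\Delta U)_\epsilon^\dagger\|_2 \le 1/\epsilon$ by definition of the $\epsilon$-pseudoinverse (its smallest retained singular value is $\ge \epsilon$), and $R\Pi_J = A(I,J) = \Pi_I^{\text{loc}\,T} U$ is a submatrix of $U+\Delta U$ up to the perturbation, so $\|R\Pi_J\|_2 \le \|U\|_2 \le \|U+\Delta U\|_2 + \|\Delta U\|_2$. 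Absorbing $\|U+\Delta U\|_2$ against the pseudoinverse is the cleaner route: writing $R\Pi_J = \Pi_I^{\text{loc}\,T}(U+\Delta U) - \Pi_I^{\text{loc}\,T}\Delta U$ gives $(U+\Delta U)_\epsilon^\dagger R\Pi_J = (U+\Delta U)_\epsilon^\dagger(U+\Delta U)\Pi_?^{\dots} - (U+\Delta U)_\epsilon^\dagger \Pi_I^{\text{loc}\,T}\Delta U$; the first factor has norm $\le 1$ (it is a partial isometry composition) and the second is $\le \|\Delta U\|_2/\epsilon$, yielding the $\|\Delta C\|_2(1+\|\Delta U\|_2/\epsilon)$ term. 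For (i), I would insert and subtract the unperturbed product: (i) $= CU_\epsilon^\dagger R\Pi_J + C\big((U+\Delta U)_\epsilon^\dagger - U_\epsilon^\dagger\big)R\Pi_J$, use the first bullet of this plan for $CU_\epsilon^\dagger R\Pi_J = C - CV_2V_2^T$, and bound the remaining difference by a standard perturbation estimate for the $\epsilon$-pseudoinverse of the form $\|(U+\Delta U)_\epsilon^\dagger - U_\epsilon^\dagger\|_2 \lesssim \frac{1}{\epsilon}\|(U+\Delta U)_\epsilon^\dagger\|_2\|\Delta U\|_2 \le \|\Delta U\|_2/\epsilon^2$, combined with $\|C\,(\cdot)\| \le \|CU_\epsilon^\dagger\|_2\|U\|_2 \cdot(\ldots)$ or more directly by factoring $C$ through $\|Q_C(I_*,:)^\dagger\|_2$ as in Lemma \ref{lemma:NB1}; this is where the terms $2\|\Delta U\|_2$ and $\frac{1}{\epsilon}\|\Delta U\|_2^2$, each multiplied by $\|Q_C(I_*,:)^\dagger\|_2$, come from.

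The main obstacle I anticipate is getting the constants and the exact grouping in the perturbation estimate for $C\big((U+\Delta U)_\epsilon^\dagger - U_\epsilon^\dagger\big)R\Pi_J$ to match the stated bound cleanly — in particular, tracking how the factor $\|Q_C(I_*,:)^\dagger\|_2$ (rather than a crude $\|C\|_2/\epsilon$) is preserved through the $\epsilon$-pseudoinverse perturbation, and handling the discontinuity of the $\epsilon$-pseudoinverse when $\Delta U$ moves a singular value across the threshold $\epsilon$. The likely fix is to route every appearance of $C$ times an $\epsilon$-pseudoinverse through Lemma \ref{lemma:NB1} (which already packages exactly this $\|Q_C(I_*,:)^\dagger\|_2(1+\|\Delta U\|_2/\epsilon) + \|\Delta C\|_2/\epsilon$ structure), and to use the SVD split $U = W_1\Sigma_1 V_1^T + W_2\Sigma_2 V_2^T$ to argue that the threshold-crossing terms are themselves $O(\epsilon)$ and hence harmless, exactly the mechanism already exploited in Theorem \ref{thm:CAe}. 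Once each of the four resulting contributions is matched to a term in the claimed inequality, the triangle inequality assembles $E_*$ and completes the proof.
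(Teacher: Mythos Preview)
Your split into (i) $= C(U+\Delta U)_\epsilon^\dagger R\Pi_J$ and (ii) $= \Delta C\,(U+\Delta U)_\epsilon^\dagger R\Pi_J$ matches the paper, and your treatment of (ii) is essentially right. Two things go astray.

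First, a simplification you are missing: in this section $R = A(I_*,:)$ (the oversampled rows), so $R\Pi_J = A(I_*,J) = U$ \emph{exactly}. There is no local selection matrix $\Pi_I^{\text{loc}}$ to introduce. With this, (ii) is immediate:
\[
\big\|(U+\Delta U)_\epsilon^\dagger R\Pi_J\big\|_2 = \big\|(U+\Delta U)_\epsilon^\dagger U\big\|_2 \le \big\|(U+\Delta U)_\epsilon^\dagger (U+\Delta U)\big\|_2 + \big\|(U+\Delta U)_\epsilon^\dagger \Delta U\big\|_2 \le 1 + \tfrac{\|\Delta U\|_2}{\epsilon}.
\]

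Second, and more importantly, your plan for (i) takes a detour that creates precisely the obstacle you flag. You propose to compare to the \emph{unperturbed} quantity $CU_\epsilon^\dagger R\Pi_J$ and then bound $C\big((U+\Delta U)_\epsilon^\dagger - U_\epsilon^\dagger\big)U$ via a ``standard perturbation estimate for the $\epsilon$-pseudoinverse''. No such estimate is available in the form you want: the $\epsilon$-pseudoinverse is discontinuous at the threshold, so $\|(U+\Delta U)_\epsilon^\dagger - U_\epsilon^\dagger\|_2 \lesssim \|\Delta U\|_2/\epsilon^2$ is false in general. The paper sidesteps this completely by never introducing $U_\epsilon^\dagger$. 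It takes the SVD of the \emph{perturbed} matrix $U+\Delta U = W\Sigma V^T$ (with $\Sigma_2$ the block of singular values below $\epsilon$) and writes
\[
C(U+\Delta U)_\epsilon^\dagger U = C(U+\Delta U)_\epsilon^\dagger\big[(U+\Delta U) - \Delta U\big] = CV_1V_1^T - C(U+\Delta U)_\epsilon^\dagger \Delta U = C - CV_2V_2^T - C(U+\Delta U)_\epsilon^\dagger \Delta U.
\]
The last term is bounded by Lemma~\ref{lemma:NB1} with $\Delta C = 0$, giving $\|Q_C(I_*,:)^\dagger\|_2(1+\|\Delta U\|_2/\epsilon)\|\Delta U\|_2$. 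For $CV_2V_2^T$, the $Q_C$-factorization trick yields $\|CV_2V_2^T\|_2 \le \|Q_C(I_*,:)^\dagger\|_2\|UV_2\|_2$; since $V_2$ comes from $U+\Delta U$ (not $U$), one uses $UV_2 = (U+\Delta U)V_2 - \Delta U\,V_2$ to get $\|UV_2\|_2 \le \epsilon + \|\Delta U\|_2$. Summing gives exactly $\|Q_C(I_*,:)^\dagger\|_2\big(\epsilon + 2\|\Delta U\|_2 + \|\Delta U\|_2^2/\epsilon\big)$, with no pseudoinverse-perturbation argument needed at all. (Note also that your claimed equality $\|CV_2V_2^T\|_2 = \|Q_C(I_*,:)^\dagger\|_2\|\Sigma_2\|_2$ should be an inequality.)
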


We now begin with the stability analysis. Note that the stability depends on the specific implementation used. 
In the forthcoming analysis, we assume the SCURCA $A_{I_*J}^\epsilon$ is computed as follows.\footnote{Note that the stability crucially depends on the implementation. Other implementations are possible, an obvious one being one that computes $C(U^\dagger R)$ rather than $(CU^\dagger )R$ as done here. This is seen to work well too, although we do not have a proof.}
\begin{enumerate}
    \item Compute the factors $\hat{C} = fl(A\Pi_J), \hat{U} = fl(\Pi_{I_*}^TA\Pi_J)$ and $\hat{R} = fl(\Pi_{I_*}^TA)$.
    \item Solve the (rank-deficient) underdetermined linear systems, 
    \begin{equation*}
        \hat{s}_i^{(1)} = fl\left((\hat{U}^T)_\epsilon^\dagger [\hat{C}]_i^T\right) \in \R^{k+p}
    \end{equation*}
    for all $i\in [m]$, where $[\hat{C}]_i$ is the $i$th row of $\hat{C}$.
    \item Compute matrix-vector multiply $\hat{s}_i^{(2)} = fl\left(\hat{R}^T\hat{s}_i^{(1)}\right) \in \R^n$.
    \item Let $\left(\hat{s}_i^{(2)}\right)^T$ be the $i$th row of the computed CUR decomposition $fl\left(A_{I_*J}^\epsilon\right)$.
\end{enumerate}

We now analyze each step. The first step is matrix-matrix multiplications with orthonormal matrices. Using the forward error bound\footnote{The error is termed the forward error as it indicates how close the computed version $\hat{C}$ is to the exact version $C$ \cite[Section 1.5]{HighamStabilityBook}.} for matrix-matrix multiplication \cite[Section 3.5]{HighamStabilityBook}, we obtain the following:
\begin{itemize}
    \item $\hat{C} = C + E_C$ where $\norm{E_C}_2 \leq \gamma \norm{A}_2$,
    \item $\hat{U} = U + E_U$ where $\norm{E_U}_2 \leq \gamma \norm{A}_2$,
    \item $\hat{R} = R + E_R$ where $\norm{E_R}_2 \leq \gamma \norm{A}_2$.
\end{itemize} In many cases, these error matrices $E_C,E_U$ and $E_R$ are the zeros matrix as $C, U$ and $R$ are simply submatrices of the original matrix $A$.

In the second step, we solve the (rank-deficient) underdetermined linear systems row by row. The error analysis for the (rank-deficient) underdetermined linear systems can be summarized in the following theorem. The proof of Theorem \ref{thm:rankdef} can be found in Appendix \ref{app:stab}.
\begin{theorem} \label{thm:rankdef}
    Consider the (rank-deficient) underdetermined linear system,
    \begin{equation} \label{eq:rankdefeq}
        \min_{x'} \norm{B_\epsilon x'-b}_2
    \end{equation} where $B_\epsilon \in \R^{m\times n}$ $(m\leq n)$
is (possibly) rank-deficient $(\rk(B_\epsilon)\leq m)$ with singular vales larger than $\epsilon$ and $b\in \R^{m}$. Then assuming $\epsilon > \gamma\norm{B_\epsilon}_2$, the minimum norm solution to \eqref{eq:rankdefeq} can be computed in a backward stable manner, i.e., the computed solution $\hat{s}$ satisfies
    \begin{equation}
        \hat{s} = \left( B_\epsilon + E_1 \right)^\dagger \left(b+E_2\right)
    \end{equation} where $\norm{E_1}_2 \leq \gamma \norm{B_\epsilon}_2$ and $\norm{E_2}_2\leq \gamma\norm{b}_2$.
\end{theorem}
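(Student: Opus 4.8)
The plan is to analyse the standard algorithm for \eqref{eq:rankdefeq}: form a rank-revealing orthogonal factorization of $B_\epsilon$ --- I will use the economy SVD $B_\epsilon = W\Sigma V^T$, though a complete orthogonal decomposition obtained from column-pivoted QR behaves identically --- keep only the block $W_1\Sigma_1 V_1^T$ whose singular values exceed $\epsilon$ (by hypothesis exactly the $r:=\rk(B_\epsilon)$ nonzero ones), and return $\hat s = V_1\bigl(\Sigma_1^{-1}(W_1^T b)\bigr)$, computed as a matrix--vector product $W_1^T b$, a componentwise reciprocal scaling by $\Sigma_1^{-1}$, and a matrix--vector product with $V_1$. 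This mirrors the treatment of rank-deficient and underdetermined systems in \cite[Chs.~20--21]{HighamStabilityBook}. I would organise the proof into three claims: (a) the numerical rank is recovered correctly; (b) the first two steps carry only backward error, of sizes $\gamma\norm{B_\epsilon}_2$ (charged to $B_\epsilon$) and $\gamma\norm{b}_2$ (charged to $b$); and (c) the rounding error of the third step is \emph{relative to the solution norm} and hence absorbable into a further $\gamma\norm{B_\epsilon}_2$-perturbation of $B_\epsilon$.

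For (a): backward stability of the SVD gives $\widehat W\widehat\Sigma\widehat V^T = B_\epsilon+\Delta_0$ with $\norm{\Delta_0}_2\le\gamma\norm{B_\epsilon}_2$ and $\widehat W,\widehat V$ orthonormal up to $O(u)$ (replace them by their polar factors, folding the $O(u)$ into $\gamma$). By Weyl's inequality $|\widehat\sigma_i-\sigma_i(B_\epsilon)|\le\gamma\norm{B_\epsilon}_2$; since every nonzero $\sigma_i(B_\epsilon)$ exceeds $\epsilon$, the rest vanish, and $\epsilon>\gamma\norm{B_\epsilon}_2$, the computed singular values split cleanly at the threshold $\epsilon$, so thresholding returns the correct rank $r$. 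Crucially this persists under any extra perturbation of $B_\epsilon$ of norm $\le\gamma\norm{B_\epsilon}_2$, so everything below stays on the rank-$r$ stratum and the first-order perturbation calculus for the pseudoinverse applies.

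For (b)--(c): absorb $\Delta_0$ into $E_1$; the product $\widehat W_1^T b$ has forward error $e$ with $\norm e_2\le\gamma\norm b_2$, and since $\widehat W_1$ has orthonormal columns $\widehat W_1^T b+e=\widehat W_1^T(b+E_2)$ with $E_2:=\widehat W_1 e$, $\norm{E_2}_2\le\gamma\norm b_2$; the reciprocal scaling is componentwise relatively accurate, hence equivalent to replacing $\Sigma_1$ by $\Sigma_1+\delta\Sigma$ with $\norm{\delta\Sigma}_2\le u\norm{\Sigma_1}_2\le\gamma\norm{B_\epsilon}_2$, a further perturbation charged to $E_1$. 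Let $B$ be the resulting exactly rank-$r$ matrix, $z$ the exact output of the computed scaling step, and $\hat s_0:=\widehat V_1 z=B^\dagger(b+E_2)$; the third step returns $\hat s=\widehat V_1 z+\delta x$ with $\norm{\delta x}_2\le\gamma\norm z_2$, and since $\widehat V_1$ has orthonormal columns $\norm z_2=\norm{\widehat V_1 z}_2=\norm{\hat s_0}_2$, so $\norm{\delta x}_2\le\gamma\norm{\hat s_0}_2$ --- a relative error. Split $\delta x$ along $\range(B^T)$; the first-order expansion of $(B+\Delta)^\dagger$ shows the change $\hat s_0\mapsto\hat s_0+\delta x$ is realised by a perturbation $\Delta$ of rank $\le2$ --- built from $\delta x$ and $\hat s_0$ for the in-range part (via the $-B^\dagger\Delta B^\dagger$ term) and from $\delta x$ and $B^{\dagger T}\hat s_0$ for the out-of-range part (via the $(I-B^\dagger B)\Delta^T B^{\dagger T}B^\dagger$ term) --- with $\norm{\Delta}_2\le\sigma_1(B)\norm{\delta x}_2/\norm{\hat s_0}_2\le\gamma\sigma_1(B)\le\gamma\norm{B_\epsilon}_2$, and a fixed-point argument promotes this first-order construction to an exact perturbation of the same order, which we add to $E_1$. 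Collecting $E_1$ and $E_2$ gives $\hat s=(B_\epsilon+E_1)^\dagger(b+E_2)$ with the stated bounds.

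The crux --- the only place $\epsilon>\gamma\norm{B_\epsilon}_2$ is genuinely needed --- is (c): the rounding error of the final \emph{expanding} multiplication by $\widehat V_1$, which measured against $b$ is amplified by $\norm{B_\epsilon^\dagger}_2\approx 1/\epsilon$, must still be foldable into a backward perturbation of order $\gamma\norm{B_\epsilon}_2$. This closes only because (i) orthonormality of $\widehat V_1$ makes that error relative to $\norm{\hat s_0}_2$ rather than to $\norm b_2/\epsilon$, and (ii) the spectral gap keeps all introduced perturbations below $\sigma_r(B_\epsilon)$, so $B_\epsilon$ stays rank $r$ and $(\cdot)^\dagger$ varies smoothly; without the gap the numerical rank could be mis-detected and the whole bookkeeping would collapse.
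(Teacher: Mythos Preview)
Your argument is correct but takes a genuinely different route from the paper. The paper's proof reduces the rank-deficient problem to a full-rank underdetermined one: it computes an orthonormal basis $\hat Q_\epsilon$ for the column space of $B_\epsilon$, forms the full-row-rank system $\min_{x'}\|(\hat Q_\epsilon^T B_\epsilon)x'-\hat Q_\epsilon^T b\|_2$, invokes the off-the-shelf backward-stability result for full-rank underdetermined systems \cite[Theorem~21.4]{HighamStabilityBook} as a black box, and then pulls the backward perturbations back to $B_\epsilon$ and $b$ via the identity $(QA)^\dagger b=A^\dagger Q^T b$ for orthonormal $Q$. Your approach instead tracks the SVD-based solver operation by operation and absorbs each rounding error individually into $E_1$ or $E_2$. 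The paper's route is shorter and more modular; yours is self-contained and makes explicit exactly where the gap hypothesis $\epsilon>\gamma\|B_\epsilon\|_2$ enters. Your step (c) works but is heavier than necessary: rather than invoking first-order pseudoinverse perturbation theory plus a fixed-point promotion (which you only sketch), you can use the componentwise backward error of matrix--vector multiply to write $fl(\widehat V_1 z)=(\widehat V_1+\Delta V)z$ with $|\Delta V|\le\gamma_r|\widehat V_1|$, take a thin QR $\widehat V_1+\Delta V=\tilde V_1R$ with $\|R-I\|_2=O(\gamma)$, and observe that $\tilde V_1\bigl[(\Sigma_1+\delta\Sigma)R^{-1}\bigr]^{-1}\widehat W_1^T$ is \emph{exactly} the pseudoinverse of $\widehat W_1(\Sigma_1+\delta\Sigma)R^{-1}\tilde V_1^T$, which differs from $B_\epsilon$ by at most $\gamma\|B_\epsilon\|_2$. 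This closes (c) without any iterative argument.
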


Theorem \ref{thm:rankdef} tells us that the computed solution to a (rank-deficient) underdetermined linear system is the exact solution to a slightly perturbed problem. Now, using Theorem \ref{thm:rankdef}, for each $i\in [m]$ we obtain 
\begin{equation} \label{eq:rankdeflssol}
    \hat{s}_i^{(1)} = \left((\hat{U}^T)_\epsilon + E_i^{(U)} \right)^\dagger \left([\hat{C}]_i^T+ E_i^{(C)}\right),
\end{equation} where $\norm{E_i^{(U)}}_2 \leq \gamma \norm{A}_2$ and $\norm{E_i^{(C)}}_2\leq \gamma \norm{A}_2$. It is worth emphasizing that the backward errors $E_i^{(C)}$, $E_i^{(U)}$ depend on $i$.
Now since $\epsilon > \gamma\norm{A}_2$ by assumption and $\sigma_{\min}\left(\hat{U}^T_\epsilon + E_i^{(U)}\right)\geq \epsilon-\gamma\norm{A}_2$ by Weyl's inequality, there exists a perturbation $E_i\in \R^{k\times (k+p)}$ with $\norm{E_i}_2 \leq \epsilon + \gamma \norm{A}_2$\footnote{If $\hat{U}^T = W_1\Sigma_1 V_1^T + W_2 \Sigma_2 V_2^T$ is the SVD of $\hat{U}^T$ where $\Sigma_2$ contains the singular values of $\hat{U}^T$ smaller than $\epsilon$, then we can take $E_i = -W_2\Sigma_2V_2^T+E_i^{(U)}$ for each $i$.} such that 
\begin{equation}
    \hat{s}_i^{(1)} = \left(\hat{U}^T + E_i \right)_{\epsilon-\gamma\norm{A}_2}^\dagger \left([\hat{C}]_i^T+ E_i^{(C)}\right).
\end{equation} For shorthand, let $\hat{S}$ be a matrix with its $i$th row equal to $\left(\hat{s}_i^{(1)}\right)^T$.

In the third step, we compute a matrix-vector product \cite[Section 3.5]{HighamStabilityBook} with $\hat{R}^T$, which gives us $\hat{s}_i^{(2)} = fl\left(\hat{R}^T\hat{s}_i^{(1)}\right) = \hat{R}^T\hat{s}_i^{(1)} + E_{s_i}$ with
\begin{align} \label{eq:hatSbound}
    \norm{E_{s_i}}_2 &\leq \gamma \norm{\hat{R}^T}_2\norm{\hat{s}_i^{(1)}}_2 \notag \\ &\leq \gamma\norm{A}_2\left(\norm{Q_C(I_*,:)^\dagger}_2 \left(1+\frac{\norm{E_i}_2+\norm{E_U}_2}{\epsilon-\gamma\norm{A}_2}\right)+\frac{\norm{E_i^{(C)}}_2+\norm{E_C}_2}{\epsilon-\gamma\norm{A}_2} \right) \notag \\
    &\leq \gamma\norm{A}_2\left(\norm{Q_C(I_*,:)^\dagger}_2 \left(1+\frac{(\epsilon+\gamma \norm{A}_2)+\gamma\norm{A}_2}{\epsilon-\gamma\norm{A}_2}\right)+\frac{\gamma\norm{A}_2+\gamma\norm{A}_2}{\epsilon-\gamma\norm{A}_2} \right) \notag \\
    &\leq \gamma \norm{A}_2 \norm{Q_C(I_*,:)^\dagger}_2 ,
\end{align} where Lemma \ref{lemma:NB1} was used in the penultimate line with $\Delta U = E_U+ E_i^T$ and $\Delta C = E_C + e_i E_i^{(C)}$ where $e_i\in \R^{m}$ is the $i$th canonical basis vector. In the last line of \eqref{eq:hatSbound}, we used the fact that $\gamma$ suppresses any low-degree polynomial in $m,n$ and $k$, and $\epsilon > \gamma\norm{A}_2$. 

The following shows the expression for $\hat{s}_i^{(2)}$,
\begin{align*}
    \hat{s}_i^{(2)} &= \hat{R}^T\hat{s}_i^{(1)} + E_{s_i} = \hat{R}^T\left(\hat{U}^T + E_i \right)_{\epsilon-\gamma\norm{A}_2}^\dagger \left([\hat{C}]_i^T+ E_i^{(C)}\right)+ E_{s_i} \\
    &= (R+E_R)^T\left(U^T + E_U^T + E_i\right)_{\epsilon-\gamma\norm{A}_2}^\dagger \left([C+E_C]_i^T + E_i^{(C)}\right)  + E_{s_i}.
\end{align*}

Finally, in the fourth step, we combine $\hat{s}_i^{(2)} \in \R^n$ into a matrix to form $\widehat{A_{I_*J}^\epsilon} = fl\left(A_{I_*J}^\epsilon\right) \in \R^{m\times n}$ by setting the $i$th row of $\widehat{A_{I_*J}^\epsilon}$ to be  $\left(\hat{s}_i^{(2)}\right)^T$, giving us
\begin{equation} \label{eq:fleqn}
    fl\left(A_{I_*J}^\epsilon\right) = \hat{S}\hat{R} + E
\end{equation} where $E\in \R^{m\times n}$ is a matrix with its $i$th row equal to $E_{s_i}^T$ and satisfies $\norm{E}_2 \leq \gamma \norm{A}_2 \norm{Q_C(I_*,:)^\dagger}_2$, since $\gamma$ suppresses any low-degree polynomial in $m$. 

We now state the main stability result of the CURCA with the $\epsilon$-pseudoinverse, $A_{I_*J}^\epsilon$.
\begin{theorem} \label{thm:stabresult}
Let $0<\epsilon \ll 1$ be a truncation parameter for the pseudoinverse such that $\epsilon > \gamma \norm{A}_2$. Suppose that $\sloppy A_{I_*J}^\epsilon = A(:,J)A(I_*,J)_\epsilon^\dagger A(I_*,:)$ is computed in the following order:
    \begin{enumerate}
        \item Compute $C = A(:,J)$, $U = A(I_*,J)$ and $R = A(I_*,:)$,
        \item Compute each row of $CU_\epsilon^\dagger$ using a backward stable (rank-deficient) \sloppy underdetermined linear solver,
        \item Compute $CU_\epsilon^\dagger$ times $R$. Let $fl\left(A_{I_*J}^\epsilon\right)$ denote the output.
    \end{enumerate}  Then under Assumption \ref{assumptions},
    \begin{align} 
        \norm{A - fl\left(A_{I_*J}^\epsilon\right)}_F &\leq 4\sqrt{m} \norm{Q_C(I_*,:)^\dagger}_2 \norm{Q_X(J,:)^\dagger}_2\left(\norm{A(I-X^\dagger X)}_F + 2\epsilon\right) \\
    &\quad + \gamma \norm{A}_2 \norm{Q_C(I_*,:)^\dagger}_2. \notag
    \end{align}
\end{theorem}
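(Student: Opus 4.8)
The plan is to continue the row-by-row error analysis from where \eqref{eq:fleqn} left off, namely $fl(A_{I_*J}^\epsilon)=\hat S\hat R+E$ with $\norm{E}_F\le\gamma\norm{A}_2\norm{Q_C(I_*,:)^\dagger}_2$, and to reproduce the structure of the proof of Theorem~\ref{thm:CAe} with the exact projectors replaced by their computed, \emph{row-dependent} analogues. First I would discard the harmless pieces. Since $\hat R=R+E_R$ with $\norm{E_R}_2\le\gamma\norm{A}_2$, and since the derivation of \eqref{eq:hatSbound} already shows $\norm{\hat s_i^{(1)}}_2\lesssim\norm{Q_C(I_*,:)^\dagger}_2$ for every $i$ (apply Lemma~\ref{lemma:NB1} with $\Delta U=E_U+E_i^{T}$ and $\Delta C$ supported on row $i$), we get $\norm{\hat S}_F\lesssim\sqrt m\,\norm{Q_C(I_*,:)^\dagger}_2$; hence $\norm{\hat S\hat R-\hat S R}_F\le\norm{\hat S}_F\norm{E_R}_2$ and $\norm{E}_F$ are both absorbed into the additive $\gamma\norm{A}_2\norm{Q_C(I_*,:)^\dagger}_2$ term, and it remains to bound $\norm{A-\hat S R}_F$.

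For that, use $R=\Pi_{I_*}^TA$ to write $A-\hat SR=(I-\hat S\Pi_{I_*}^T)A$ and split with the oblique projector $\proj_{\Pi_J,X^T}=\Pi_J(X\Pi_J)^\dagger X$,
\begin{equation*}
(I-\hat S\Pi_{I_*}^T)A=(I-\hat S\Pi_{I_*}^T)A(I-\proj_{\Pi_J,X^T})+(C-\hat S U)(X\Pi_J)^\dagger X,
\end{equation*}
where I used $A\Pi_J=C$ and $\Pi_{I_*}^TC=U$; this mirrors \eqref{eq:OSthmErr}. For the first summand I would use the identity $A(I-\proj_{\Pi_J,X^T})=A(I-X^\dagger X)(I-\proj_{\Pi_J,X^T})$ (valid since $X(:,J)$ has full row rank by Assumption~\ref{assumptions}), submultiplicativity, $\norm{I-\proj_{\Pi_J,X^T}}_2=\norm{\proj_{\Pi_J,X^T}}_2=\norm{Q_X(J,:)^{-1}}_2$ (projector property~4 and Lemma~\ref{lemma:obproj}), and the crude estimate $\norm{I-\hat S\Pi_{I_*}^T}_2\le1+\norm{\hat S}_F\lesssim\sqrt m\,\norm{Q_C(I_*,:)^\dagger}_2$, to bound it by a constant times $\sqrt m\,\norm{Q_C(I_*,:)^\dagger}_2\norm{Q_X(J,:)^{-1}}_2\norm{A-AX^\dagger X}_F$. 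For the second summand it suffices to show $\hat SU=C+E_S$ with $\norm{E_S}_F\lesssim\sqrt m\,\epsilon\,\norm{Q_C(I_*,:)^\dagger}_2$, after which $\norm{(C-\hat SU)(X\Pi_J)^\dagger X}_F\le\norm{E_S}_F\norm{Q_X(J,:)^{-1}}_2$ finishes.

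The claim $\hat SU\approx C$ is the heart of the argument and, I expect, the main obstacle. The difficulty is that the backward errors $E_i$ appearing in \eqref{eq:rankdeflssol} are only of size $\approx\epsilon$, i.e.\ comparable to the truncation level itself, so $\hat S$ is neither of the form $\tilde C\tilde U_\epsilon^\dagger$ for fixed matrices nor a small relative perturbation of $CU_\epsilon^\dagger$, and a naive perturbation bound for the $\epsilon$-pseudoinverse is unavailable. Lemma~\ref{lemma:NB2} is exactly what is needed here: working one row at a time, the $i$th row of $\hat SU$ equals $([C]_i+\delta c_i^{T})(U+\Delta U_i)_{\epsilon'}^\dagger U$ with $\epsilon'=\epsilon-\gamma\norm{A}_2$, $\norm{\delta c_i}_2\le\gamma\norm{A}_2$ and $\norm{\Delta U_i}_2\le\epsilon+\gamma\norm{A}_2$, and since $U=R\Pi_J$, Lemma~\ref{lemma:NB2} applied with $\Delta C=e_i\delta c_i^{T}$, $\Delta U=\Delta U_i$ and truncation level $\epsilon'$ gives a perturbation whose $i$th row has $2$-norm at most
\begin{equation*}
\norm{Q_C(I_*,:)^\dagger}_2\Bigl(\epsilon'+2\norm{\Delta U_i}_2+\tfrac1{\epsilon'}\norm{\Delta U_i}_2^2\Bigr)+\norm{\delta c_i}_2\Bigl(1+\tfrac1{\epsilon'}\norm{\Delta U_i}_2\Bigr)\lesssim\epsilon\,\norm{Q_C(I_*,:)^\dagger}_2,
\end{equation*}
using $\norm{A}_2\gg\epsilon>\gamma\norm{A}_2$ so that $\epsilon'\approx\epsilon$ and the $\gamma$-terms are dominated (here $\norm{Q_C(I_*,:)^\dagger}_2\ge1$ since $Q_C$ has orthonormal columns). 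Stacking the $m$ rows gives $\norm{E_S}_F\lesssim\sqrt m\,\epsilon\,\norm{Q_C(I_*,:)^\dagger}_2$. Collecting the three contributions and tracking the (modest) constants yields the stated bound with the factor $4\sqrt m$ and the additive terms $2\epsilon$ inside and $\gamma\norm{A}_2\norm{Q_C(I_*,:)^\dagger}_2$ outside; apart from Lemma~\ref{lemma:NB2}, the only real care needed is the row-wise bookkeeping and the harmless replacement of $\epsilon'$ by $\epsilon$ throughout.
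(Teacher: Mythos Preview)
Your proposal is correct and follows essentially the same route as the paper's proof: starting from \eqref{eq:fleqn}, discarding $\hat S E_R$ and $E$ into the additive $\gamma$-term via the row-wise bound $\norm{\hat S}_F\lesssim\sqrt m\,\norm{Q_C(I_*,:)^\dagger}_2$, splitting $A-\hat SR=(I-\hat S\Pi_{I_*}^T)A$ with the oblique projector $\proj_{\Pi_J,X^T}$ exactly as in \eqref{eq:OSthmErr}, and handling the cross term $(C-\hat SU)(X\Pi_J)^\dagger X$ by applying Lemma~\ref{lemma:NB2} row by row to obtain $\hat S R\Pi_J=\hat SU=C+E_S$ with $\norm{E_S}_F\lesssim\sqrt m\,\epsilon\,\norm{Q_C(I_*,:)^\dagger}_2$. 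Your identification of the row-dependent backward errors as the crux, and of Lemma~\ref{lemma:NB2} as the device that absorbs the $\bigO(\epsilon)$ perturbation $\Delta U_i$ without blowing up, mirrors the paper's reasoning precisely.
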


Theorem \ref{thm:stabresult} tells us that the bound for the computed version of the stabilized CURCA $\widehat{A_{IJ}^\epsilon}$ is at most a factor $\bigO(\sqrt{m})$ worse than its exact arithmetic counterpart $A_{IJ}^\epsilon$ plus an error of $\gamma \norm{A}_2 \norm{Q_C(I_*,:)^\dagger}_2$. More specifically, we have
\begin{equation*}
    \norm{A-fl(A_{I_*J}^\epsilon)}_F \leq 4\sqrt{m} \left(\text{Bound \eqref{eq:OSThm}}\right) +\gamma \norm{A}_2 \norm{Q_C(I_*,:)^\dagger}_2.
\end{equation*}
Roughly, this shows that the computed error is in the same order as the error in exact arithmetic, up to a factor $\bigO(\sqrt{m})$.
The factor $\bigO(\sqrt{m})$ is likely an artifact of the proof given below; in stability analysis it is common to see bounds with such overestimates, and also standard practice to expect to observe much better performance in practice. 
Throughout various parts of the proof, we loosely bound the $2$-norm by the Frobenius norm, typically because we only have information about the norms of rows or columns of a matrix. For example in the proof of Theorem \ref{thm:stabresult}, we bound $\norm{\hat{S}}_2 \leq \norm{\hat{S}}_F$ in \eqref{eq:hatSineq}, which is likely a pessimistic overestimate.

\begin{proof}[Proof of Theorem \ref{thm:stabresult}]
    From the above analysis, we have
    \begin{equation} \tag{\ref{eq:fleqn}}
         fl\left(A_{I_*J}^\epsilon\right) = \hat{S}\hat{R} + E = \hat{S}R+\hat{S}E_R + E,
    \end{equation} where $\norm{E}_2 \leq \gamma \norm{A}_2 \norm{Q_C(I_*,:)^\dagger}_2$. Let us apply Lemma \ref{lemma:NB2} to each row of $\hat{S}R$ to obtain
    \begin{equation*}
        \hat{S}_i R\Pi_J = \left([C+E_C]_i +\left(E_i^{(C)}\right)^T\right)(U+E_U+E_i^T)_{\epsilon-\gamma \norm{A}_2}^\dagger R \Pi_J = [C]_i + E_i^{(P)},  
    \end{equation*} where $\norm{E_i^{(P)}}_2 \leq 8\epsilon \norm{Q_C(I_*,:)^\dagger}_2$. Therefore, letting $E^{(P)}$ be a matrix with $E_i^{(P)}$ as its $i$th row, we get
    \begin{equation*}
        \hat{S}R\Pi_J = C + E^{(P)},
    \end{equation*} where $\norm{E^{(P)}}_F \leq 8\epsilon\sqrt{m} \norm{Q_C(I_*,:)^\dagger}_2$. Now proceeding similarly to the proof of Theorem \ref{thm:CAe}, we obtain
\begin{align} \label{eq:proof1}
     A -  \hat{S}R &= \left(I- \hat{S}\Pi_{I_*}^T\right)A = \left(I- \hat{S}\Pi_{I_*}^T\right)A(I-\Pi_J (X\Pi_J)^\dagger X) - E^{(P)}(X\Pi_J)^\dagger X \notag\\
     &= \left(I- \hat{S}\Pi_{I_*}^T\right)A(I-X^\dagger X)(I-\Pi_J (X\Pi_J)^\dagger X) - E^{(P)}(X\Pi_J)^\dagger X,
\end{align} where $E^{(P)}(X\Pi_J)^\dagger X$ satisfies
\begin{equation*}
    \norm{E^{(P)}(X\Pi_J)^\dagger X}_F \leq \norm{E^{(P)}}_F \norm{(X\Pi_J)^\dagger X}_2 \leq  8\epsilon\sqrt{m} \norm{Q_C(I_*,:)^\dagger}_2 \norm{Q_X(J,:)^\dagger}_2.
\end{equation*}
The first term in \eqref{eq:proof1} can be bound by
\begin{align*}
     \norm{I- \hat{S}\Pi_{I_*}^T}_2 & \norm{A(I-X^\dagger X)}_F \norm{I-\Pi_J (X\Pi_J)^\dagger X}_2 \\ &\leq \left(1+\norm{\hat{S}}_2\right) \norm{A(I-X^\dagger X)}_F \norm{\Pi_J (X\Pi_J)^\dagger X}_2 \\
     &\leq 4\sqrt{m}\norm{Q_C(I_*,:)^\dagger}_2 \norm{Q_X(J,:)^\dagger}_2\norm{A(I-X^\dagger X)}_F
\end{align*} where in the final line we used \eqref{eq:hatSbound}, noting that
\begin{equation} \label{eq:hatSineq}
    \norm{\hat{S}}_2 \leq \norm{\hat{S}}_F \leq \sqrt{\sum_{i=1}^m \norm{\hat{s}_i^{(1)}}_2^2}  \leq 3\sqrt{m}\norm{Q_C(I_*,:)^\dagger}_2.
\end{equation} Using \eqref{eq:hatSineq}, we can also bound
\begin{equation*}
    \norm{\hat{S}E_R}_F \leq \norm{\hat{S}}_F\norm{E_R}_2 \leq \gamma \norm{A}_2 \norm{Q_C(I_*,:)^\dagger}_2.
\end{equation*}
Finally putting everything together, we obtain
\begin{align*}
    \norm{A-fl(A_{I_*J}^\epsilon)}_F &= \norm{A-\hat{S}R -\hat{S}E_R-E}_F \\ &\leq 4\sqrt{m} \norm{Q_C(I_*,:)^\dagger}_2 \norm{Q_X(J,:)^\dagger}_2\left(\norm{A(I-X^\dagger X)}_F + 2\epsilon\right) \\
    &\quad + \gamma \norm{A}_2 \norm{Q_C(I_*,:)^\dagger}_2.
\end{align*}
\end{proof}

It is natural to wonder what could go wrong without the $\epsilon$-pseudoinverse. Two problems may arise without the $\epsilon$-pseudoinverse. First, the matrix $U$ may not be numerically full rank, so Theorem \ref{thm:rankdef} cannot be used as $\epsilon>\gamma\norm{A}_2$ may no longer hold. In addition, the two lemmas, Lemma \ref{lemma:NB1} and Lemma \ref{lemma:NB2} become meaningless as both require division by $\epsilon$. Without the $\epsilon$-pseudoinverse, the error bound can become uncontrollably large, causing issues in several places in the proof of Theorem \ref{thm:stabresult}. For example, the bound for the error $\norm{E_{s_i}}_2$ in \eqref{eq:hatSbound} for computing the matrix-vector product and the bound for $\norm{E_i^{(P)}}_2$ in the proof of Theorem \ref{thm:stabresult} may no longer hold as they can become arbitrarily large without the $\epsilon$-pseudoinverse. Nevertheless, in practice we observe stability without the $\epsilon$-truncation, so we recommend a careful implementation of the pseudoinverse (without the $\epsilon$-truncation) for practical purposes.\footnote{To be clear, implementing the $\epsilon$-truncation does not increase the complexity and can be recommended for guaranteed stability. We have simply not observed instability without the $\epsilon$-truncation.}
A similar observation, commenting on the role of the $\epsilon$-pseudoinverse, has been mentioned in \cite[Section 4.2]{Nakatsukasa2020}. See Section \ref{subsec:implementation} for numerical experiments.

In Theorem \ref{thm:stabresult}, one might question how large $\norm{Q_C(I_*,:)^\dagger}_2$ can be, given that it is part of the added term, $\gamma \norm{A}_2 \norm{Q_C(I_*,:)^\dagger}_2$ in Theorem \ref{thm:stabresult}. This could pose a problem if $\norm{Q_C(I_*,:)^\dagger}_2$ grows exponentially in $m$ or $n$. However, Theorem \ref{thm:stabresult} is enough to conclude that $A_{IJ}^\epsilon$ is numerically stable when the indices are chosen reasonably. This means that we first choose the column indices $J$, and sensibly select the row indices $I_*$ from the the selected columns $A(:,J)$ such that $\norm{A(:,J)A(I_*,J)^\dagger}_2 = \norm{Q_C(I_*,:)^\dagger}_2$ is bounded by a low-degree polynomial involving $m,n$ and $k$; see Section \ref{subsec:chooseRCdep}. A similar approach is also employed in other works such as \cite{CortinovisYing2024,DongMartinsson2023,VoroninMartinsson2017,Xia2024}. For example, Gu-Eisenstat's strong rank-revealing QR factorization \cite{GuEisenstat1996} can be used on the chosen columns $A(:,J)$ to obtain $\norm{Q_C(I,:)^{-1}}_2 \leq \sqrt{mk}$, which can be reduced further with oversampling. This also highlights the importance of oversampling. In the absence of oversampling, poorly selected indices can make $\norm{Q_C(I,:)^{-1}}_2$ exponentially large. Therefore, oversampling can be employed to stabilize the CURCA, which we discuss further in the following section. 

\section{Oversampling for the CURCA} \label{sec:method}
In the previous section, we proved theoretical results involving the CURCA (Corollary \ref{cor:CAe}), the stabilized CURCA (Theorem \ref{thm:CAe}) and the stabilized CURCA in the presence of rounding errors (Theorem \ref{thm:stabresult}). All of the results involved an oversampling parameter $p$ and an extra set of row indices $I_0$ and bounding $\norm{Q_C(I\cup I_0,:)^{\dagger}}_2$ was important for the accuracy and stability of the CURCA. In this section, we discuss oversampling in the context of the CURCA and devise an algorithm that naturally arises from the discussion. 

We first describe the setting. Suppose we have obtained the row indices $I$ and the column indices $J$ with $|I| = |J| = k$ by applying some algorithm, for example, the ones discussed in the introduction (Section \ref{sec:intro}), on a row space approximator $X \in \R^{k\times n}$ of $A\in \R^{m\times n}$.\footnote{A similar version for column space approximator can also be devised by considering $A^T$ instead. In the case when $X$ and $Y$ are not available, for example, when the initial set of indices were obtained using uniform sampling, $R  = A(I,:)$ and $C = A(:,J)$ can be used as the row space approximator and the column space approximator, respectively.}  To have a concrete algorithm in mind for getting the set of indices $I$ and $J$, we present pivoting on a random sketch \cite{DongMartinsson2023,Duersch_Gu_SIREV,VoroninMartinsson2017} below in Algorithm \ref{alg:pivsketch}.

\begin{algorithm}[!h]\footnotesize
  \caption{Pivoting on a random sketch (\cite[Algorithm 1]{DongMartinsson2023})}
  \label{alg:pivsketch}
  \begin{algorithmic}[1]
\Require{$A \in \R^{m\times n}$ of rank $r$, target rank $k \leq r$ (typically $k \ll \min\{m,n\}$)}
\Ensure{Column indices $J$ and row indices $I$ with $|I| = |J| = k$}
\vspace{0.5pc}
\Statex \texttt{function} $[I,J] = \mathtt{Rand\_ Pivot}(A,k)$
\State Draw a random embedding $\Omega\in \R^{k\times m}$.
\State Set $X = \Omega A \in \R^{k\times n}$, a row sketch of $A$
\State Apply CPQR on $X$. Let $J$ be the $k$ column pivots.
\State Apply CPQR on $A(:,J)^T$. Let $I$ be the $k$ row pivots.
\end{algorithmic}
\end{algorithm}

Algorithm \ref{alg:pivsketch} is a version of Algorithm 1 from \cite{DongMartinsson2023}, which selects the column indices first by applying column pivoted QR (CPQR)\footnote{LU with partial pivoting is also effective in practice \cite{DongMartinsson2023}.} on the row sketch $X = \Omega A$ and then selects the row indices by applying CPQR on the chosen columns $A(:,J)^T$. Algorithm \ref{alg:pivsketch} is an example where we obtain the row indices from the already-chosen column indices, which was recommended in the previous section. Here, the row space approximator is the row sketch $X = \Omega A$ and the column space approximator is the columns $A(:,J)$. A bound for the CURCA (see Corollary \ref{cor:CAe}) without oversampling is given by
\begin{equation} \label{eq:CAbnd}
    \norm{A-A_{IJ}}_F \leq \norm{Q_C(I,:)^{-1}}_2 \norm{Q_X(J,:)^{-1}}_2  \norm{A-AX^\dagger X}_F
\end{equation} where $A_{IJ} = A(:,J)A(I,J)^\dagger A(I,:)$ is the CURCA and $Q_C$ and $Q_X$ are orthonormal matrices spanning the columns of $C$ and $X^T$ respectively. 

Many existing algorithms focus on minimizing the first two terms on the right-hand side of \eqref{eq:CAbnd} as they control the accuracy of the CURCA. In the CURCA and the other CUR decompositions such as the CURBA, $C(C^\dagger A R^\dagger )R$, we often take $|I| = |J|$, however, without increasing the overall rank of the approximation, we can oversample either $I$ or $J$.\footnote{Oversampling both the row indices $I$ and the column indices $J$ independently is not recommended. See Section \ref{subsec:oversampleNI} for a further discussion.} Suppose we oversample the rows to $I_*:= I\cup I_0$ where $|I_0| = p$ is an extra set of row indices for oversampling. Then the first term of the bound changes from $\norm{Q_C(I,:)^{-1}}_2$ to $\norm{Q_C(I_*,:)^{\dagger}}_2$ (see Corollary \ref{thm:CAe}). Now $\sigma_{\min}(Q_C(I_*,:))\geq \sigma_{\min}(Q_C(I,:))$ by the Courant-Fischer min-max theorem, which improves the bound in \eqref{eq:CAbnd} as $\norm{Q_C(I_*,:)^{\dagger}}_2 \leq \norm{Q_C(I,:)^{-1}}_2$. 
Now, to maximize the effect of oversampling, we ought to find unchosen indices that enrich the trailing singular subspace of $Q_C(I,:)$, which in turn increases the minimum singular value(s) of $Q_C(I,:)$. It turns out that we can achieve this by projecting $Q_C([m]-I,:)$ onto the trailing singular subspace of $Q_C(I,:)$ and use a good row selection algorithm to choose $p$ extra rows. Before stating the algorithm, we first motivate our rationale behind our approach using the cosine-sine (CS) decomposition.

Let $Q \in \R^{n\times k}$ be any orthonormal matrix with partition 
\begin{equation}
    Q = \begin{blockarray}{cc}
         k \\
       \begin{block}{[c] c}
         Q_1 & n_1 \\
         Q_2 & n_2 \\
       \end{block}
     \end{blockarray}
\end{equation}
where $n_1,n_2 \geq k$ with $n_1 + n_2 = n$. Then the CS decomposition \cite{PaigeWei1994} gives us
\begin{equation} \label{eq:CS2eqns}
    Q_1 = U_1 C V^T \text{ and } Q_2 = U_2 S V^T,
\end{equation} where $U_1 \in \R^{n_1\times k},U_2 \in \R^{n_2 \times k},V \in \R^{k\times k}$ are matrices with orthonormal columns and $C = \diag(c_1,c_2,...,c_k), S = \diag(s_1,s_2,...,s_k)$ are diagonal matrices satisfying $c_i^2 + s_i^2 = 1$ for all $i$. In our context, we can view $Q_C(I,:)$ as $Q_1$ and $Q_C([m]-I,:)$ as $Q_2$. Now assume, without loss of generality, that the $c_i$'s are in non-increasing order so the $s_i$'s are in non-decreasing order. Then in order to increase the minimum singular value of $Q_1$, we could add the rows of $Q_2$ that contribute the most to the trailing right singular subspace of $Q_1$, i.e., $V_{-p} = V(:,k-p+1:k) \in \R^{k\times p}$. By \eqref{eq:CS2eqns}, $V_{-p}$ is also the dominant right singular subspace of $Q_2$. When we add the rows of $Q_2$ that lies in the subspace spanned by $V_{-p}$ to $Q_1$, we can increase the minimum singular value(s) of $Q_1$, i.e., increase $c_k$ or the last few $c_i$'s. Therefore we can apply any algorithm (such as those discussed in Section \ref{sec:intro}) that finds good row indices on $Q_2V_{-p} \in \R^{n_2 \times p}$ and append them to $Q_1$ to increase the minimum singular value of $Q_1$. 

If the algorithm requires the dominant left singular vectors of $Q_2V_{-p}$ such as in DEIM or leverage scores sampling, we can simply scale the columns of $Q_2V_{-p}$ using the singular values of $Q_1$, $C = \diag(c_1,...,c_k)$ because
\begin{equation*}
    Q_2V_{-p} = U_{2,-p}\diag(s_{k-p+1},...,s_k) = U_{2,-p}\sqrt{1-\diag(c_{k-p+1}^2,...,c_k^2)}
\end{equation*} where $U_{2,-p}  = U_2(:,k-p+1:k)$.

In light of the observation made above, we propose the following algorithm (Algorithm \ref{alg:oversample_parameter}) for oversampling indices. For simplicity, we use column pivoted QR (CPQR) on $Q_2V_{-p}$ in Algorithm \ref{alg:oversample_parameter} to obtain a good set of oversampling indices. However, any algorithm that finds a good set of rows can replace line $4$ of Algorithm \ref{alg:oversample_parameter}.

\begin{algorithm}[ht]\footnotesize
  \caption{Oversampling indices}
  \label{alg:oversample_parameter}
  \begin{algorithmic}[1]
\Require{A full column rank matrix $B\in \R^{n\times k}$ with $n\geq k$, an index set $I$ with $|I| = k$ and an oversampling parameter $p \leq k$}
\Ensure{Extra indices $I_0$ with $|I_0| = p$}
\vspace{0.5pc}
\Statex \texttt{function} $I_0 = \mathtt{OS}(B,I,p)$
\State $[Q_B,\sim ] = \mathtt{qr}(B,0)$, \Comment{Skip this step if $B$ is already orthonormal.}
\State $[\sim,\sim,V] = \mathtt{svd}(Q_B(I,:))$,
\State Set $V_{-p} = V(:,k-p+1:k)$, the trailing $p$ right singular vectors of $Q_B(J,:)$.
\State Apply CPQR on $\left(Q_B([m]-I,:)V_{-p}\right)^T$. Let $I_0$ be the extra $p$ indices for oversampling.
\end{algorithmic}
\end{algorithm}

Given a full column rank matrix $B$, an index set $I$ and an oversampling parameter $p (\leq k)$, Algorithm \ref{alg:oversample_parameter} finds the extra indices for oversampling by projecting $Q_B$ onto the unchosen indices $[m]-I$ from the left and the trailing $p$ right singular vectors of $Q_B(I,:)$ from the right and performing CPQR to obtain the extra indices $I_0$ with $|I_0| = p$. When $p >k$, we can iterate Algorithm \ref{alg:oversample_parameter} to obtain at most $k$ oversampling indices at each iteration. We can also devise a version of Algorithm \ref{alg:oversample_parameter} with a tolerance parameter $0<\epsilon<1$ rather than taking $p$ as input; for example $\epsilon=\sqrt{k/n}$ may be a reasonable choice (given that singular values of a $O(k)\times k$ submatrix of an $n\times n$ Haar distributed orthogonal matrix are of this order~\cite{meckes2019random}). This version uses projection onto the trailing right singular subspace of $Q_B(I,:)$ corresponding to the singular values that are less than $\epsilon$.
While this version does not guarantee $\sigma_{\min}\left(Q_B(I \cup I_0,:)\right)\geq \epsilon $, it can be applied iteratively to achieve this. To guarantee $\sigma_{\min}\left(Q_B(I \cup I_0,:)\right)\geq \epsilon$, one can alternatively use the \texttt{GappyPOD+E} oversampling algorithm~\cite{PeherstorferDrmacGugercin2020}. However, this approach comes with a higher computational cost.
The complexity of Algorithm \ref{alg:oversample_parameter} is $\bigO(nk^2)$ where the dominant cost comes from taking the QR decomposition of $B$ (line 1). If $B$ were orthonormal to begin with, then the dominant cost comes from forming $Q_B([m]-I,:)V_{-p}$, which costs $\bigO(nkp)$.

In the analysis for the CURCA in Section \ref{sec:analysis}, oversampling played two roles: (i) improve the accuracy of the CURCA (see Theorem \ref{thm:CAe}, Corollary \ref{cor:CAe}, Theorem \ref{thm:stabresult}), and (ii) improve the stability of the CURCA in the presence of roundoff errors (see Theorem \ref{thm:stabresult}). Therefore, it is important to oversample, especially if the original set of indices are not good. For example, if the core matrix $A(I,J)$ is (nearly) singular. Oversampling should be done in such a way that the term $\norm{Q_C(I,:)^{-1}}_2$ is reduced further by adding an extra set of indices $I_0$ that lie in the trailing singular subspace of $Q_C(I,:)$. Algorithm \ref{alg:oversample_parameter} achieves this by picking good unchosen indices from the trailing singular subspace of $Q_C$. We further highlight the significance of oversampling through numerical illustrations in Section \ref{sec:numill}.


\section{Numerical Illustration} \label{sec:numill}
In this section, we illustrate the concepts discussed in the previous sections through numerical experiments. We first discuss implementation details of (S)CURCA in MATLAB. Then we show that, without loss of generality, after selecting the rows first, the columns should be chosen based on those rows, i.e., the rows and the columns for the CURCA should not be chosen independently. We show that oversampling can improve the quality when the indices are poorly selected. Additionally, we also illustrate the effectiveness of the oversampling algorithm, Algorithm \ref{alg:oversample_parameter}, for the CURCA and show its competitiveness against some existing methods. In all the experiments, the best rank-$k$ approximation error using the truncated SVD (TSVD) is used as reference. The experiments were conducted in MATLAB version 2021a using double precision arithmetic.

\subsection{Implementation of (S)CURCA} \label{subsec:implementation}
The main concern in the computation of the CURCA is that (i) the representation of the CURCA typically involves three (highly) ill-conditioned matrix and (ii) we take the pseudoinverse of a (highly) ill-conditioned matrix. When the original matrix $A$ is low-rank and we have a good low-rank approximation of $A$, then we expect $C,U$ and $R$ to be highly ill-conditioned. We test four possible implementations in MATLAB,
\begin{enumerate}
    \item $\mathtt{A_{IJ}^{(1)} = \left(A(:,J)/A(I,J)\right)*A(I,:)}$,
    \item $\mathtt{A_{IJ}^{(2)} = A(:,J)*\left(V/S*W'\right)*A(I,:)}$ where $\mathtt{[W,S,V] = svd(U,`econ')}$,
    \item $\mathtt{A_{IJ}^{(3)} = \left(A(:,J)*V/S\right)*\left(W'*A(I,:)\right)}$ where $\mathtt{[W,S,V] = svd(U,`econ')}$,
    \item $\mathtt{A_{IJ}^{(\epsilon)} = \left(A(:,J)*V(:,II)/S(II,II)\right)*\left(W(:,II)'*A(I,:)\right)}$ \\ where $\mathtt{[W,S,V] = svd(U,`econ')}$ and $\mathtt{II = (diag(S) > \epsilon)}$ with $\epsilon = 10^{-15}$.
\end{enumerate} 
The third implementation, $\mathtt{A_{IJ}^{(3)}}$ is the suggested implementation of the CURCA. For guaranteed stability, we suggest the fourth implementation, $\mathtt{A_{IJ}^{(\epsilon)}}$ with the $\epsilon$-pseudoinverse. However we have not observed instability without the $\epsilon$-pseudoinverse using the third implementation. We use two test matrices: (1) $1000\times 1000$ test matrix generated using the MATLAB command, $\mathtt{randn(1000,30)*randn(30,1000)}$ and (2) $1374\times 1374$ matrix named $\mathtt{nnc1374}$ from the SuiteSparse Matrix Collection \cite{FloridaDataset2011}. We choose the rows and columns by first choosing the column indices $J$ using column pivoted QR (CPQR) on $A$ and then using CPQR on the chosen columns $A(:,J)$ to get the row indices $I$.

\begin{figure}[!ht]
\hspace*{-0.5cm}
\subfloat[\centering $\mathtt{randn(1000,30)*randn(30,1000)}$]{\label{subfig:implementation1}\includegraphics[scale = 0.33]{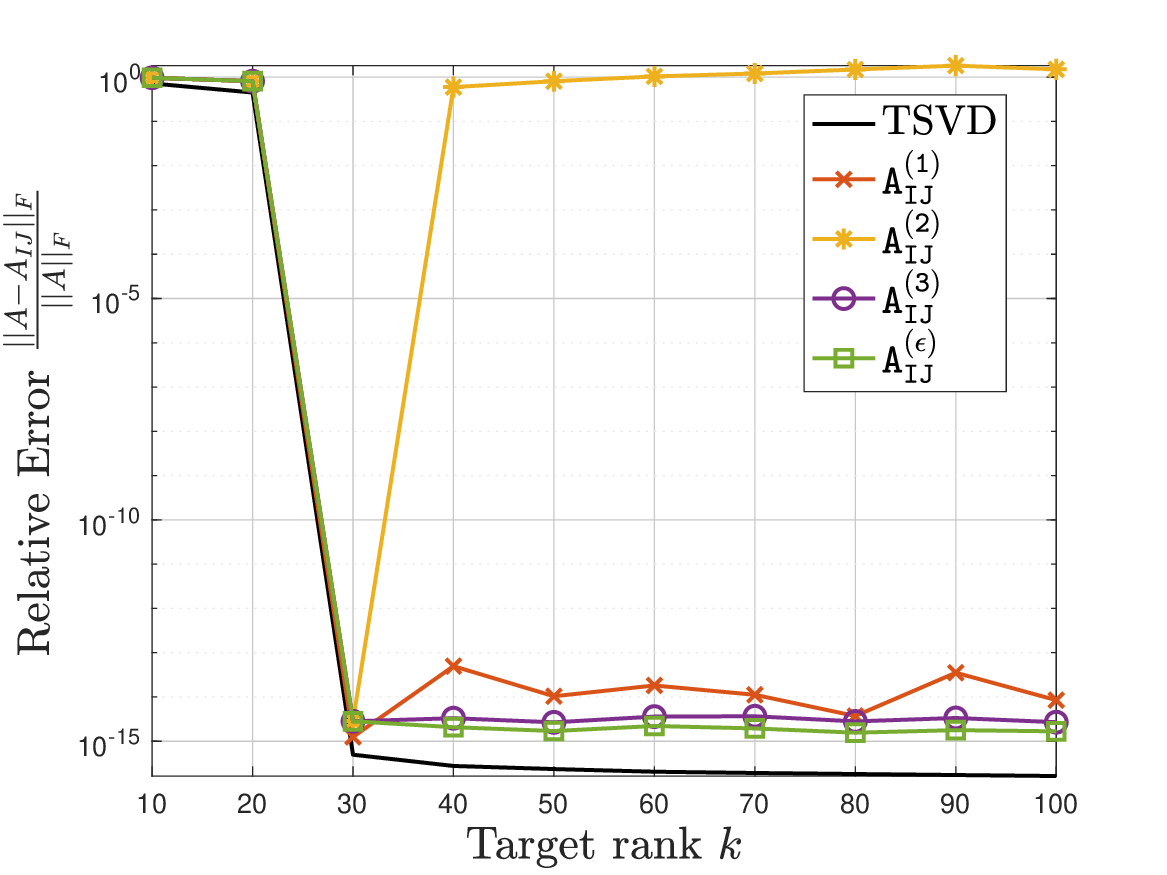}}
\subfloat[\centering $\mathtt{nnc1374}$]{\label{subfig:implementation2}\includegraphics[scale = 0.33]{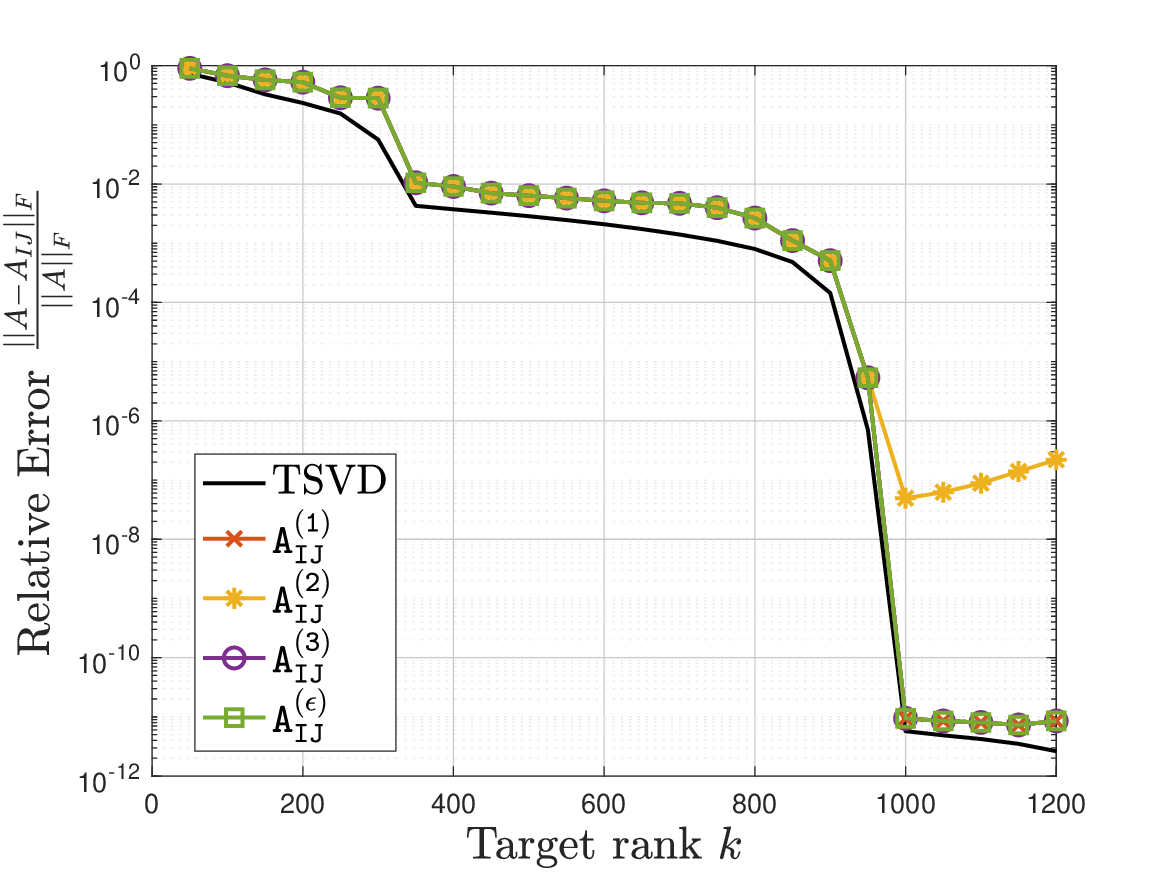}}
\centering 
\caption{Tests for different implementations of the CURCA. The third and fourth implementation performs stably. We recommend the third implementation.}
\label{fig:implementation}
\end{figure}

The results are depicted in Figure \ref{fig:implementation}. First, we notice that the third implementation, $\mathtt{A_{IJ}^{(3)}}$, which is the one we suggest, yields stable approximation throughout the experiment. In particular, the third implementation performs similarly to the fourth implementation with the $\epsilon$-pseudoinverse. On the other hand, the second implementation, $\mathtt{A_{IJ}^{(2)}}$ suffers from numerical errors as we lose accuracy when the singular values decay extremely rapidly. The reason is because we explicitly form the pseudoinverse of the core matrix $A(I,J)$, which is highly ill-conditioned, an observation also noted in \cite{Nakatsukasa2020}. Therefore, it is important to not form the pseudoinverse of $U = A(I,J)$ explicitly in the CURCA. Note also that the order in which the factors are multiplied is essential for numerical stability, as the second and third implementations only differ by the order in which the factors are computed. The first implementation, $\mathtt{A_{IJ}^{(1)}}$ may lose $1$ or $2$ orders of magnitude in Figure \ref{subfig:implementation1} once the target rank becomes larger than the rank of the test matrix and appear to be less stable than the third implementation. The slash commands $(\mathtt{/,\backslash })$ in MATLAB should be used with caution for underdetermined problems, as its backslash command applied to numerically rank-deficient underdetermined problems output a sparse solution based on a pivoting strategy~\cite[\S~2.4]{anderson1995lapack}, which can differ significantly from the minimum-norm solution and may not satisfy the assumptions in our analysis.
Note that using the $\mathtt{pinv}$ command with tolerance parameter $0$ for the CURCA in MATLAB, i.e., $\mathtt{A(:,J)*pinv(A(I,J),0),*A(I,:)}$, is equivalent to the second implementation, which can be unstable. For the rest of the numerical experiments, we use the third implementation
\begin{equation}
    \mathtt{\left(A(:,J)*V/S\right)*\left(W'*A(I,:)\right)} \text{ where } \mathtt{[W,S,V] = svd(U,`econ')}.
\end{equation}

The fourth implementation computes the stabilized CURCA and is implemented in such a way that the analysis in Section \ref{subsec:numstab} hold. A less expensive alternative is to use a rank-revealing QR factorization and truncate the bottom-right corner of the upper triangular factor and the relevant columns of the orthonormal factor corresponding to the diagonal elements less than $\epsilon$. In the rank-revealing QR, the diagonal elements give a good approximation to the singular values; see for example \cite{Chan1987,GuEisenstat1996}. A possible workaround without the $\epsilon$-pseudoinverse is also discussed in \cite{Nakatsukasa2020} where we perturb the core matrix $A(I,J)$ by a small noise matrix such that the singular values of $A(I,J)$ are all larger than the unit roundoff.

\subsection{Importance of not choosing rows and columns independently} \label{subsec:chooseRCdep}
In this section, we demonstrate the importance of choosing the rows and columns that are dependent on one another in the CURCA. If the rows and columns are chosen independently of each other, the matrix $U = A(I,J)$ can be (nearly) singular. In such a scenario, we show that a sufficient amount of oversampling can remedy this problem. We use two test matrices: (1) synthetic matrix generated using
\begin{equation} \label{eq:blkrandcountereg}
    A = \begin{bmatrix}
        10^{-10} \cdot \mathtt{randn(50,50)} & \mathtt{randn(50,950)} \\ \mathtt{randn(950,50)} & 0
    \end{bmatrix} \in \R^{1000\times 1000},
\end{equation} and (2) $\mathtt{nnc1374}$ matrix from the SuiteSparse Matrix Collection used in the previous section. We test the following four cases:
\begin{enumerate}
    \item Choose columns and rows independently by applying CPQR on $A$ and $A^T$ respectively,
    \item Choose columns and rows dependently by applying CPQR on $A$ and then applying CPQR on the chosen columns $A(:,J)$ to obtain the rows,
    \item Apply Case $1$ and additionally do
 row oversampling using Algorithm \ref{alg:oversample_parameter} with $p = k$,
    \item Apply Case $2$ and additionally do row oversampling using Algorithm \ref{alg:oversample_parameter} with $p = k$,
\end{enumerate} where $k$ is the target rank.

\begin{figure}[!ht]
\hspace*{-0.5cm}
\subfloat[\centering Block random matrix \eqref{eq:blkrandcountereg}]{\label{subfig:indep1}\includegraphics[scale = 0.33]{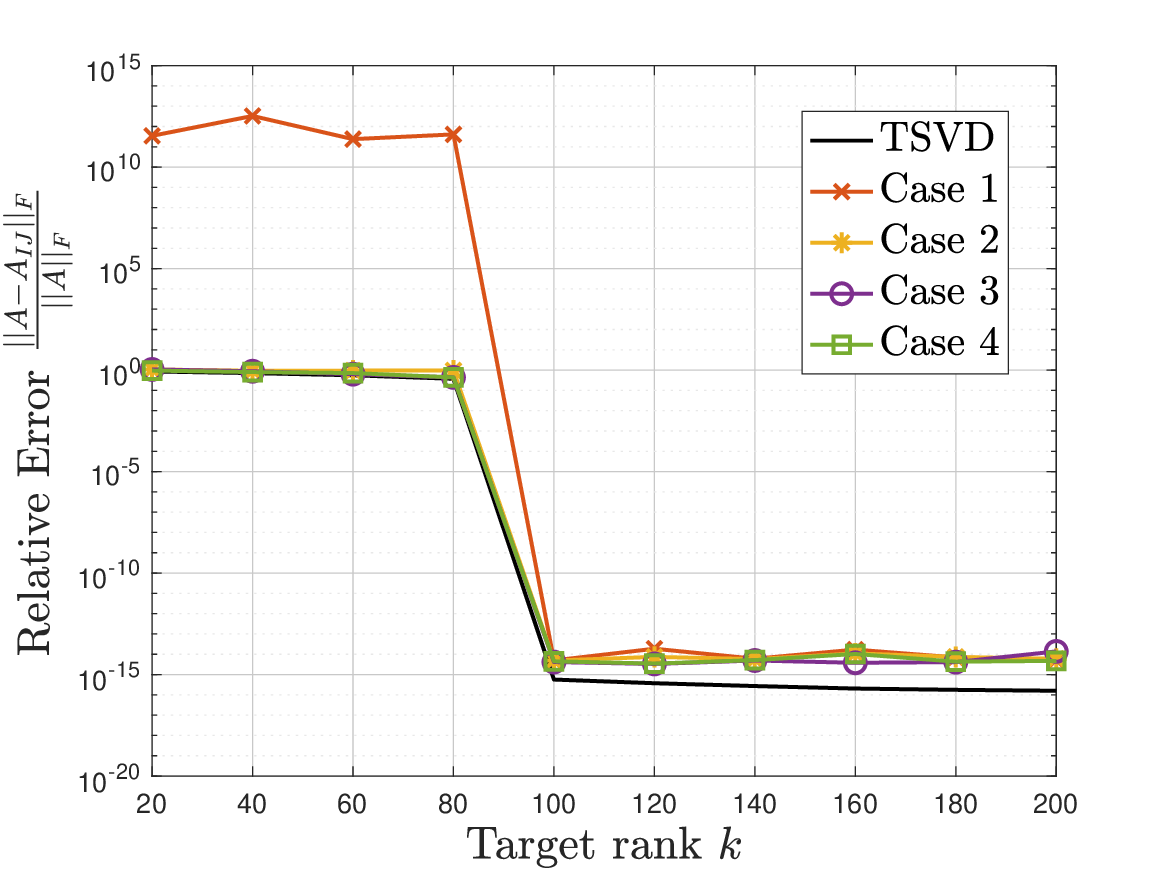}}
\subfloat[\centering $\mathtt{nnc1374}$]{\label{subfig:indep2}\includegraphics[scale = 0.33]{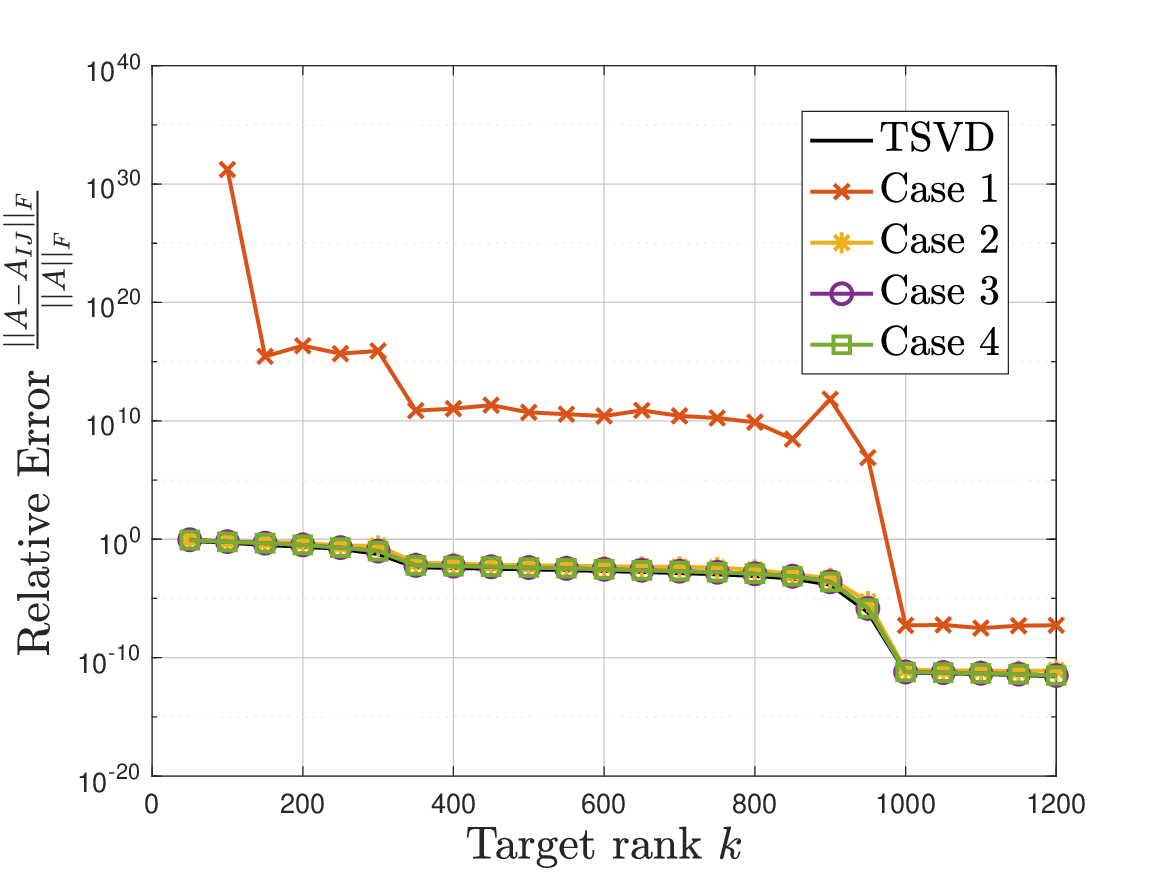}}
\centering 
\caption{Relationship between the rows and columns in the CURCA. In Case $1$, the rows and columns are chosen independently from one another and in Case $2$, we select the columns first and then the rows were computed from the selected columns. Cases $3$ and $4$ correspond to Cases $1$ and $2$ with row oversampling ($p = k$), respectively. When the rows and columns are chosen independently (Case $1$), the resulting approximation can be catastrophic.}
\label{fig:indep}
\end{figure}

In Figure \ref{fig:indep}, we show the importance of choosing the columns and rows in the CURCA. When the columns and rows are chosen independently (Case $1$), the resulting CURCA can be catastrophically poor. In Figure \ref{subfig:indep1}, poor approximation happens because when we choose the columns and rows independently, we choose the first 50 rows and columns, as they are the most important, implying that the core matrix is the small $(1,1)$-block of $A$. Since the chosen rows and columns are $\bigO(1)$ and their intersection is $\bigO(10^{-10})$, the CURCA becomes inaccurate.\footnote{A similar problem may also arise in random sampling where sampling rows and columns independently may lead to a poor CUR decomposition with high probability. For example, when 
$A = \big[
    \begin{smallmatrix}
    0 & 1 \\ 1 & 0      
    \end{smallmatrix}
\big]$
 and the target rank is $1$, the probability of sampling zero as the core matrix using leverage scores or column norms is $0.5$ when the rows and columns are sampled independently. Independent sampling leads to a poor CUR approximation $50\%$ of the time. However, if we sample them dependently, we choose $1$ as the core matrix with probability 1, leading to a sensible CUR approximation.} A similar issue also arises in Figure \ref{subfig:indep2} where the approximation becomes noticeably unreliable throughout; see also the $2\times 2$ example at the end of Section \ref{subsec:CURanal}. However, when we oversample sufficiently in Case $3$, we obtain a good CUR approximation. When we choose the rows and columns dependently in Cases $2$ and $4$, we have good stable approximations. Therefore, it is highly recommended to choose the rows and columns dependently. However, if that is not possible, a sufficient amount of oversampling is recommended.

\subsection{Oversampling algorithm comparison} \label{subsec:oversampleNI}
In this section, we illustrate advantages of oversampling through numerical experiments. In all experiments, unless stated otherwise, we use pivoting on a random sketch (Algorithm \ref{alg:pivsketch}) 
with column pivoted QR \cite{DongMartinsson2023,VoroninMartinsson2017} and the Gaussian sketch to get the initial set of $k$ row and column indices, where $k$ is the target rank. We then obtain the oversampling indices using the following algorithms:
\begin{enumerate}
    \item $\mathtt{OS+P}$: Algorithm \ref{alg:oversample_parameter},
    \item $\mathtt{OS+L}$: Choose $p$ extra indices corresponding to the largest $p$ leverage scores out of the unchosen indices as in \cite{GidisuHochstenbach2022},
    \item $\mathtt{OS+E}$: $\mathtt{GappyPOD+E}$ algorithm in \cite{PeherstorferDrmacGugercin2020}.
\end{enumerate} 
We set the number of oversampling indices to be $p=0$, $p = 10$ and $p = 0.5k$.

We consider three different classes of test matrices, which are summarized below:
\begin{enumerate}
    \item $\mathtt{CIFAR10}$: The CIFAR-10 training set \cite{Krizhevsky2009} consists of $60000$ images of size $32\times 32\times 3$. We choose $10000$ random images, each flattened to a vector, and treat it as a $10000\times 3072 $ data matrix.
    \item $\mathtt{YaleFace64x64}$: Yale face is a full-rank dense matrix of size $165\times 4096$ consisting of $165$ face images each of size $64\times 64$. The flattened image vectors are centered and normalized such that the mean is zero and the entries lie within $[-1,1]$.
    \item $\mathtt{SNN}$: Random sparse non-negative matrices are test matrices used in \cite{SorensenEmbree2016,VoroninMartinsson2017} that is given by,
    \begin{equation*}
        \mathtt{SNN} = \sum_{j = 1}^r s_j x_j y_j^T,
    \end{equation*} where $s_1\geq \cdots \geq s_r >0$ and $x_j\in \R^m, y_j\in \R^n$ are random sparse vectors with non-negative entries. We take $m = 100000, n = 300$ with
        \begin{equation*}
            \sum_{j = 1}^{50}\frac{2}{j}x_j y_j^T + \sum_{j = 51}^{300}\frac{1}{j}x_j y_j^T,
        \end{equation*}
    where the sparse vectors $x_j$'s and $y_j$'s are computed in MATLAB using the command $\mathtt{sprand}(m,1,0.025)$ and $\mathtt{sprand}(n,1,0.025)$, respectively.
    \end{enumerate}
\begin{figure}[!ht]
\hspace*{-0.5cm}
\subfloat[\centering\texttt{CIFAR10} dataset with pivoting on a random sketch with row oversampling.]{\label{subfig:CIFARCA}\includegraphics[scale = 0.33]{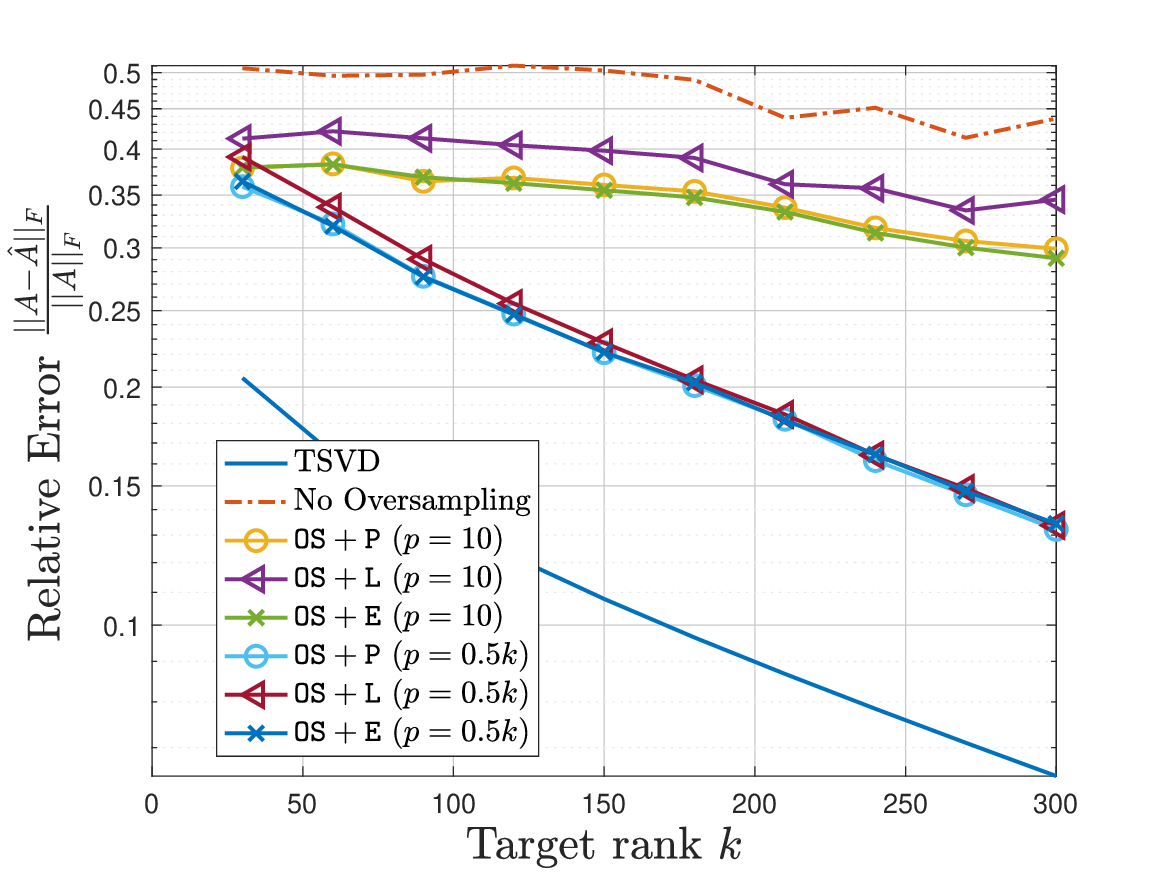}} 
\subfloat[\centering\texttt{CIFAR10} dataset with uniform sampling with row oversampling.]{\label{subfig:CIFARCAuni}\includegraphics[scale = 0.33]{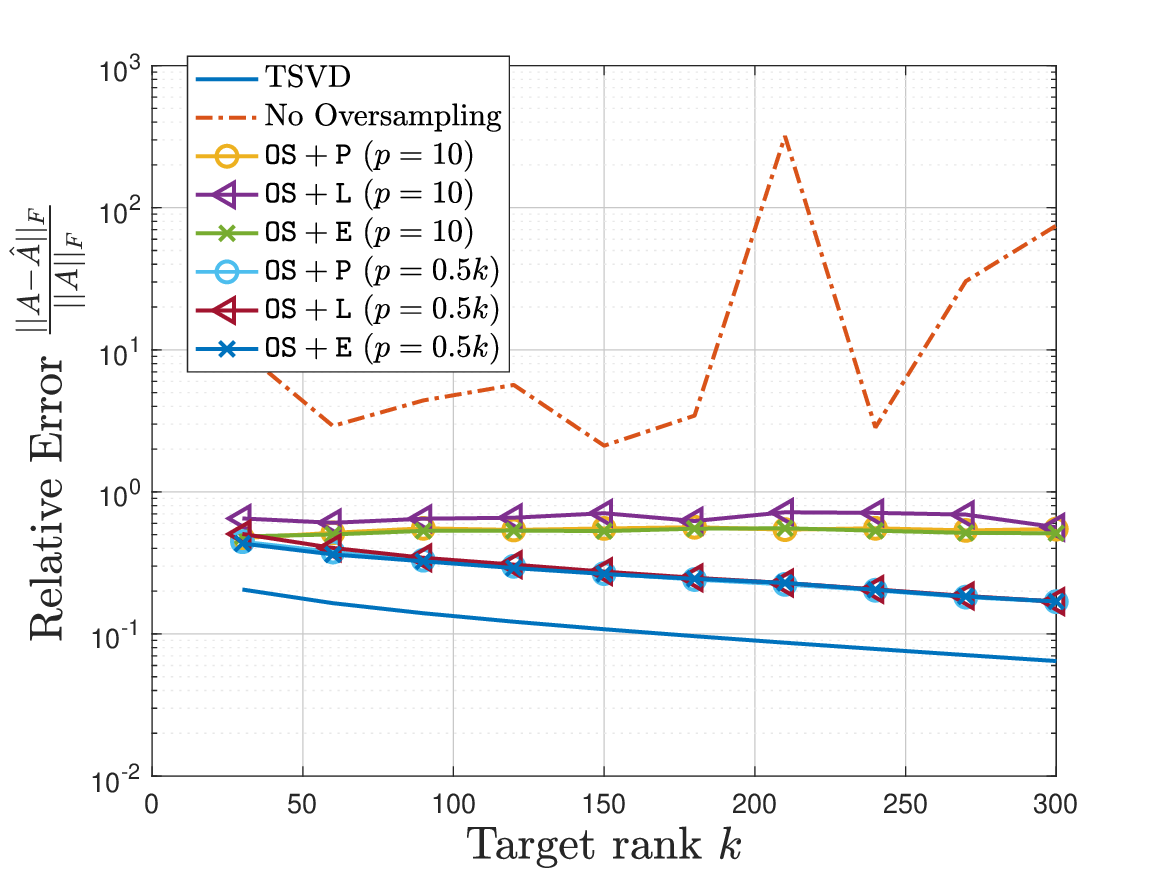}} \\
\hspace*{-0.5cm}
\subfloat[\centering \texttt{YaleFace64x64} dataset with pivoting on a random sketch with row ($p = 0.5k$) and column ($p = 10$) oversampling.]{\label{subfig:YaleCA}\includegraphics[scale = 0.33]{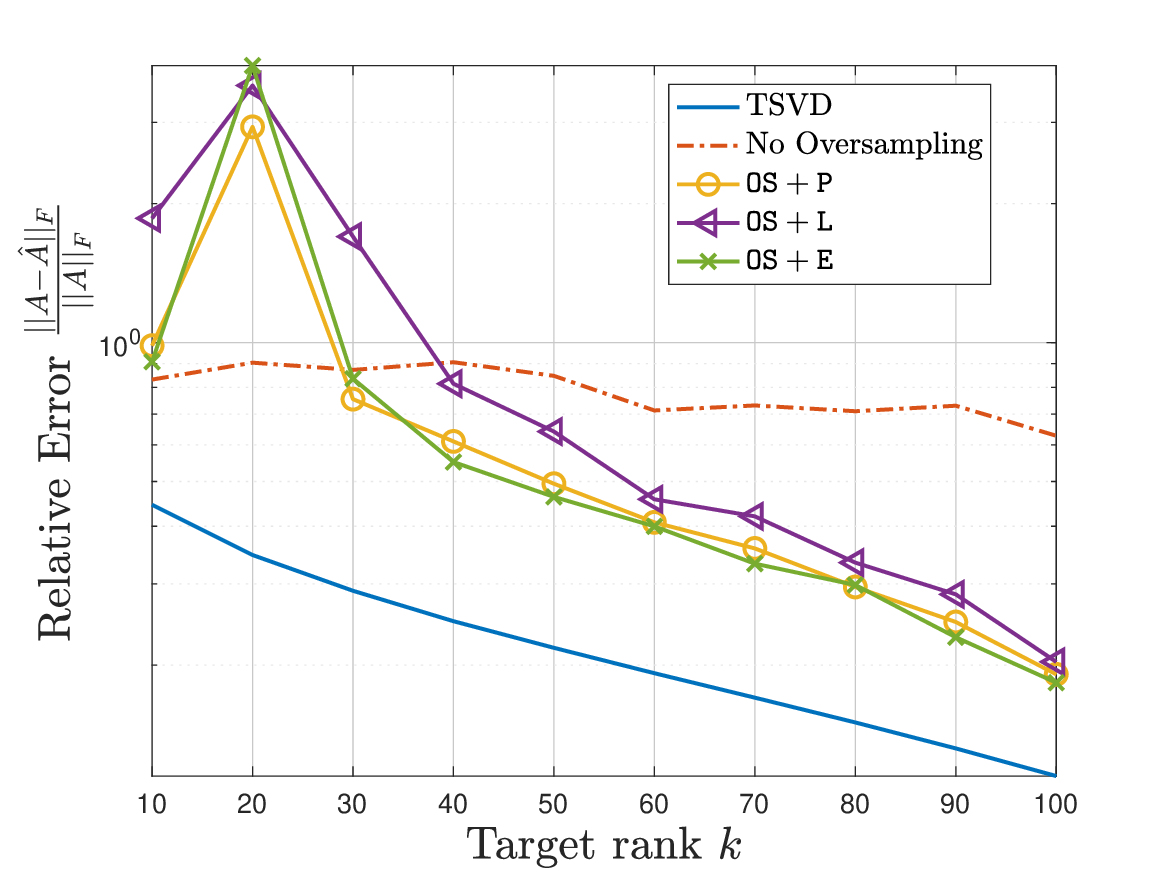}}
\subfloat[\centering \texttt{SNN} dataset with pivoting on a random sketch with row oversampling]{\label{subfig:SNN2CA}\includegraphics[scale = 0.33]{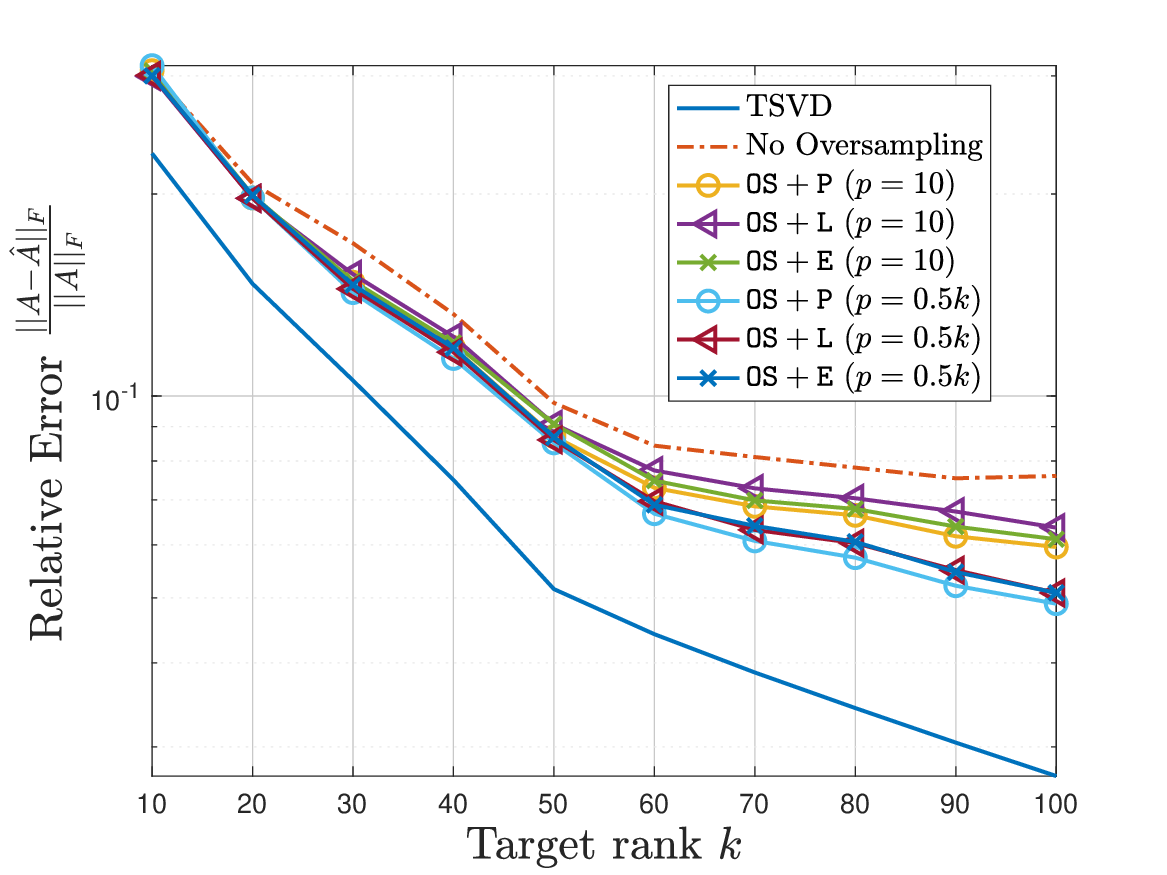}}
\centering 
\caption{Comparison of different oversampling methods for various test matrices. We use pivoting on a random sketch to obtain the initial set of indices except for Figure \ref{subfig:CIFARCAuni} where we use uniform sampling. Then we oversample the row indices using the three oversampling algorithms, $\mathtt{OS\!+\!P}$, $\mathtt{OS\!+\!L}$ and $\mathtt{OS\!+\!E}$. We also oversample column indices in Figure \ref{subfig:YaleCA} to demonstrate that oversampling both row and column indices can be harmful.}
\label{fig:CURCA}
\end{figure}

The results are depicted in Figure \ref{fig:CURCA}. We compare different oversampling algorithms for the three test matrices. We use pivoting on a random sketch (Algorithm \ref{alg:pivsketch}) to obtain the initial set of indices except for Figure \ref{subfig:CIFARCAuni} where we use uniform sampling; pivoting on a random sketch usually results in a good set of indices, while uniform sampling can give a poor set of indices. Then we oversample the row indices using the three oversampling algorithms, $\mathtt{OS\!+\!P}$, $\mathtt{OS\!+\!L}$ and $\mathtt{OS\!+\!E}$. We also oversample column indices in Figure \ref{subfig:YaleCA}. For Figure \ref{subfig:YaleCA}, $p = 0.5k$ for row oversampling and $p = 10$ for column oversampling.

We begin with the top two plots involving the \texttt{CIFAR10} dataset, Figures \ref{subfig:CIFARCA} and \ref{subfig:CIFARCAuni}. We observe that as the oversampling parameter increases, the accuracy improves. The different oversampling techniques yield a similar result with $\mathtt{OS\!+\!L}$ being usually slightly worse. In Figures \ref{subfig:CIFARCA} and \ref{subfig:CIFARCAuni}, we observe that oversampling plays a big role for the accuracy of the CURCA. In Figure \ref{subfig:CIFARCA}, $p = 10$ improves the accuracy slightly, but when the oversampling parameter $p$ becomes proportional to the target rank, we make further progress and we approximately capture the singular value decay rate. In Figure \ref{subfig:CIFARCAuni}, when the initial set of indices are worse, as it can be when we perform uniform sampling, we observe that oversampling makes the approximation more robust even when the CURCA without oversampling yields unstable results. Therefore, oversampling helps the CURCA yield more accurate and stable results. 

In Figure \ref{subfig:YaleCA}, using the \texttt{YaleFace64x64} dataset, we show what happens when we oversample both row and column indices. In this experiment, we oversample the row indices by $p =0.5k$ and the column indices by $p = 10$. In Figure \ref{subfig:YaleCA}, the CURCA can get worse with oversampling and the approximation may become unstable when both column and row indices are oversampled. This is caused by the core matrix $A(I\cup I_0, J\cup J_0)$ underestimating the singular values of $A$ as oversampling in only one of row or column indices improve the condition number of the core matrix, but when we oversample both row and column indices, condition number of the core matrix can become worse. This is similar to the phenomenon happening in Section \ref{subsec:chooseRCdep}, where the CURCA can yield poor accuracy when we choose the rows and columns independently. Therefore, we advocate oversampling in only one of row or column indices for the CURCA. Note that if we do not oversample column indices in Figure \ref{subfig:YaleCA}, so that we only oversample rows for the \texttt{YaleFace64x64} dataset, the relative error decreases steadily, as observed in the other figures in Figure \ref{fig:CURCA}. In Figure \ref{subfig:SNN2CA}, we use the $\mathtt{SNN}$ dataset. The effect of oversampling is less immediate in this example, but the accuracy still improves and the decay rate is more in line with the spectral decay.

Lastly, in all the experiments in Figure \ref{fig:CURCA}, we observe that our algorithm for oversampling $\mathtt{OS+P}$ (Algorithm \ref{alg:oversample_parameter}) is competitive with $\mathtt{OS+E}$ with $\mathtt{OS+L}$ usually being slightly worse. Therefore, Algorithm \ref{alg:oversample_parameter}, which runs with complexity $\bigO(nk^2+nkp)$ is competitive with $\mathtt{OS\!+\!E}$, which run with complexity $\bigO(nk^2p+k^4)$.

\section{Conclusion}
In this work, we study the accuracy and stability of the CUR decomposition with oversampling. We prove the relative norm bounds for the CUR decomposition, $A\approx CU^\dagger R$, and its stabilized version, $A \approx CU_{\epsilon}^\dagger R$. We further show that the stabilized version satisfies a similar relative norm bound in the presence of roundoff errors under the assumption that the rows and columns for the CUR decomposition are chosen reasonably. This means that the rows should be selected based on the chosen columns or vice versa; see Section \ref{subsec:chooseRCdep}. This aims to reduce the quantity $\norm{Q_C(I,:)^{-1}}_2$ (see Theorem \ref{thm:stabresult}), which can be further reduced by oversampling the row indices. For a stable implementation of the CURCA, $A\approx CU^\dagger R$, we recommend the MATLAB implementation
\begin{equation*}
    \mathtt{A_{IJ}^{(3)} = \left(A(:,J)*V/S\right)*\left(W'*A(I,:)\right)} \text{ where } \mathtt{[W,S,V] = svd(U,`econ')}
\end{equation*} or the corresponding version where QR factorization is used instead of the SVD.

We also proposed how oversampling should be done. Oversampling should be done such that it increase the minimum singular value of a certain square matrix that is a submatrix of an orthonormal matrix; see Section \ref{sec:method}. We suggest doing so through projecting the unchosen rows of an orthonormal matrix onto the trailing singular subspace of the square matrix and finding the important unchosen indices that will enrich the trailing singular subspace; see Algorithm \ref{alg:oversample_parameter}. Oversampling improves the stability as the core matrix becomes rectangular and rectangular matrices are more well-conditioned than square matrices. We recommend oversampling in one only one of row or column indices, but not both (see Figure \ref{subfig:YaleCA}) and choose the oversampling parameter to be proportional to the target rank when possible. Experiments show that the algorithm is competitive with other existing methods. Therefore, we advocate oversampling for the accuracy and stability of the CUR decomposition whenever possible.

\bibliographystyle{siamplain}
\bibliography{references}

\appendix

\section{Analysis of the CURBA} \label{app:CURBA}
In this section, we analyze the CURBA, 
\begin{equation}
    A_{IJ}^{(BA)} = A(:,J)\left(A(:,J)^\dagger A A(I,:)^\dagger\right) A(I,:) = C(C^\dagger A R^\dagger) R
\end{equation} with oversampling. For the CURBA, there exists a numerically stable algorthm given by the StableCUR algorithm in \cite{AndersonDuMahoneyMelgaardWuGu2015}. The algorithm computes the CURBA in the following way in MATLAB,
\begin{equation}
    \mathtt{[Q_C,\sim] = qr(A(:,J),0)}, \; \mathtt{[Q_R,\sim ] = qr(A(I,:)',0)}, \; A_{IJ}^{(BA)} = \mathtt{Q_C*(Q_C'*A*Q_R)*Q_R'}.
\end{equation} We use this implementation of the CURBA in the experiments at the end of this section.

We first prove a relative norm bound for the CURBA with oversampling. We make the following standard assumptions, which is analogous to the CURCA counterpart in Section \ref{sec:analysis}.
\begin{assumption} \label{assumptionBA} \*
    \begin{enumerate}
        \item $|I| = |J| = k$ where $k$ is the target rank. The oversampling indices are $I_0$ with $|I_0| = p_1$ for the rows and $J_0$ with $|J_0| = p_2$ for the columns and they satisfy $k+\max\{p_1,p_2\} \leq \rk(A)$.
        \item $A(:,J\cup J_0)$ has full column rank and $A(I\cup I_0,:)$ has full row rank.
        \item $X(:,J) \in \R^{k\times (k+p_2)}$ has full row rank, where $X \in \R^{k\times n}$ is a row space approximator of $A$.
        \item $Y(I,:) \in \R^{k\times (k+p_2)}$ has full row rank, where $Y\in \R^{m\times k}$ is a column space approximator of $A$.
    \end{enumerate}
\end{assumption}
The assumption that $X(:,J)$ and $A(I,:)$ having a full row rank and $Y(I,:)$ and $A(:,J)$ having a full column rank is generic, since most methods that pick good row and column indices satisfy this assumption. If one of $X$ or $Y$ are not available then $A(I,:)$ or $A(:,J)$ can be used instead, respectively.

We now prove the result for the CURBA with oversampling. The results shown below is a simple extension of \cite[Lemma 4.2]{SorensenEmbree2016} and \cite[Theorem 1]{DongMartinsson2023}. Lemma \ref{lemma:BAO} considers one-sided projection with oversampling where we project $A$ onto the chosen columns of $A$. In Theorem \ref{thm:CURBA}, we consider the CURBA, $C(C^\dagger AR^\dagger) R$ where we project $A$ onto the chosen rows and columns of $A$.

\begin{lemma} \label{lemma:BAO}
    Under Assumption \ref{assumptionBA},
    \begin{equation}
        \norm{A-CC^\dagger A} \leq \norm{Q_X(J\cup J_0,:)^\dagger}_2 \norm{A-AX^\dagger X}
    \end{equation} where $\norm{\cdot}$ is any unitarily invariant norm and $Q_X\in \R^{n\times k}$ is an orthonormal matrix spanning the columns of $X^T$.
\end{lemma}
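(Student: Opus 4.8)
The plan is to run a one-sided version of the argument behind Theorem~\ref{thm:CAe}, with the oblique column projector there replaced by the orthogonal projector $CC^\dagger$. Write $J_* := J\cup J_0$, $\Pi_{J_*} := I_n(:,J_*)$ and $C = A\Pi_{J_*}$, and introduce the oblique projector $\proj_{\Pi_{J_*},X^T} = \Pi_{J_*}(X\Pi_{J_*})^\dagger X$; this is well defined because $X\Pi_{J_*} = X(:,J_*)$ has full row rank by Assumption~\ref{assumptionBA}, it satisfies $X\proj_{\Pi_{J_*},X^T} = X$ (as $X\Pi_{J_*}$ admits a right inverse), and $A\proj_{\Pi_{J_*},X^T} = C(X\Pi_{J_*})^\dagger X$, so $(I-CC^\dagger)A\proj_{\Pi_{J_*},X^T} = 0$ because $(I-CC^\dagger)C = 0$.

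First I would establish the splitting
\[
A - CC^\dagger A = (I-CC^\dagger)A = (I-CC^\dagger)\,A\,(I-\proj_{\Pi_{J_*},X^T}) = (I-CC^\dagger)\,A(I-X^\dagger X)\,(I-\proj_{\Pi_{J_*},X^T}),
\]
where the second equality uses the vanishing term above and the third uses $X\proj_{\Pi_{J_*},X^T}=X$ (hence $X^\dagger X\proj_{\Pi_{J_*},X^T} = X^\dagger X$), exactly as in Theorem~\ref{thm:CAe}. Taking any unitarily invariant norm and using submultiplicativity gives $\norm{A-CC^\dagger A} \le \norm{I-CC^\dagger}_2\,\norm{A(I-X^\dagger X)}\,\norm{I-\proj_{\Pi_{J_*},X^T}}_2$. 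Here $\norm{I-CC^\dagger}_2 \le 1$ since $I-CC^\dagger$ is an orthogonal projector, $\norm{A(I-X^\dagger X)} = \norm{A-AX^\dagger X}$, and $\norm{I-\proj_{\Pi_{J_*},X^T}}_2 = \norm{\proj_{\Pi_{J_*},X^T}}_2$ by property~4 of oblique projectors.

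It then remains to show $\norm{\proj_{\Pi_{J_*},X^T}}_2 \le \norm{Q_X(J_*,:)^\dagger}_2$, and this is the only place where oversampling ($p_2>0$) needs attention: $X\Pi_{J_*}$ is now wide of full row rank rather than square nonsingular, so Lemma~\ref{lemma:obproj} cannot be applied to $\proj_{\Pi_{J_*},X^T}$ directly. I would instead pass to the transpose, $\proj_{\Pi_{J_*},X^T}^T = \proj_{X^T,\Pi_{J_*}}$, whose three generating matrices $X^T$, $\Pi_{J_*}$ and $\Pi_{J_*}^TX^T$ all have full column rank, and apply Lemma~\ref{lemma:obproj} to obtain $\norm{\proj_{X^T,\Pi_{J_*}}} = \norm{(\Pi_{J_*}^TQ_X)^\dagger\Pi_{J_*}^T} = \norm{Q_X(J_*,:)^\dagger\,\Pi_{J_*}^T}$ for any unitarily invariant norm; since $\Pi_{J_*}$ has orthonormal columns, $\norm{Q_X(J_*,:)^\dagger\Pi_{J_*}^T}_2 \le \norm{Q_X(J_*,:)^\dagger}_2\,\norm{\Pi_{J_*}^T}_2 = \norm{Q_X(J_*,:)^\dagger}_2$ (in fact the singular values are unchanged, since $B^\dagger\Pi_{J_*}^T(B^\dagger\Pi_{J_*}^T)^T = (B^TB)^{-1}$ for $B := Q_X(J_*,:)$). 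Combining the three bounds yields the claim.

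The substance is thus the splitting identity imported from Theorem~\ref{thm:CAe}, and the only mild obstacle is the transpose step needed to make Lemma~\ref{lemma:obproj} applicable in the oversampled regime, together with care over which norms must be spectral and which may be any unitarily invariant norm; everything else is routine. Setting $p_2 = 0$ recovers the one-sided bound of \cite[Lemma~4.2]{SorensenEmbree2016}.
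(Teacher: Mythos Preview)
Your proof is correct and follows essentially the same approach as the paper: the same oblique projector $\proj_{\Pi_{J_*},X^T}$ and the same factorization $(I-CC^\dagger)A(I-X^\dagger X)(I-\proj_{\Pi_{J_*},X^T})$ drive the bound, with the paper packaging the vanishing term via an auxiliary projector $\mathcal{P}_C=\Pi_{J_*}C^\dagger A$ where you argue it directly from $(I-CC^\dagger)C=0$. Your transpose step to invoke Lemma~\ref{lemma:obproj} when $p_2>0$ is in fact more careful than the paper's own closing line, which simply asserts $\norm{\mathcal{P}_X}_2=\norm{Q_X(J_*,:)^\dagger}_2$ without checking the column-rank hypotheses.
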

\begin{proof}
For shorthand let $J_* = J\cup J_0$ and let $\Pi_{J_*} = I_n(:,J_*) \in \R^{n\times (k+p)}$ such that $C = A(:,J_*) = A\Pi_{J_*}$. We first define two oblique projectors
    \begin{equation*}
        \mathcal{P}_X := \Pi_{J_*} (X\Pi_{J_*})^\dagger X, \hspace{0.5cm} \mathcal{P}_C := \Pi_{J_*} C^\dagger A \in \R^{n\times n}.
    \end{equation*} Note that since $C$ has full column rank, $C^\dagger A\Pi_{J_*} = C^\dagger C = I_{k+p}$, and 
    \begin{equation*}
        \mathcal{P}_C \mathcal{P}_X = \Pi_{J_*} C^\dagger A\Pi_{J_*} (X\Pi_{J_*})^\dagger X = \mathcal{P}_X.
    \end{equation*}
    Therefore we get
    \begin{equation*}
        A-CC^\dagger A = A(I-\mathcal{P}_C) = A(I-\mathcal{P}_C)(I-\mathcal{P}_X) = (I-CC^\dagger )A(I-\mathcal{P}_X).
    \end{equation*} Since $X\Pi_{J_*} = X(:,J_*)$ has full row rank,
    \begin{equation*}
        X\mathcal{P}_X = X\Pi_{J_*}\left(X\Pi_{J_*}\right)^\dagger X = X
    \end{equation*} and we obtain
    \begin{equation*}
        (I-\mathcal{P}_X) = (I-X^\dagger X)(I-\mathcal{P}_X).
    \end{equation*} Now putting these together, we obtain
    \begin{align*}
        \norm{A-CC^\dagger A} &= \norm{(I-CC^\dagger) A(I-X^\dagger X)(I-\mathcal{P}_X)} \\
        &\leq \norm{I-CC^\dagger}_2 \norm{A(I-X^\dagger X)}\norm{I-\mathcal{P}_X}_2 \\
        &= \norm{I-\mathcal{P}_X}_2 \norm{A(I-X^\dagger X)},
    \end{align*} since $\norm{I-CC^\dagger}_2 = 1$ as $CC^\dagger$ is an orthogonal projector. Now since $\mathcal{P}_X$ is an oblique projector \cite{Szyld2006}, $\norm{I-\mathcal{P}_X}_2 = \norm{\mathcal{P}_X}_2$ so the result follows by noting that
    \begin{equation*}
        \norm{\mathcal{P}_X}_2 = \norm{(X\Pi_J)^\dagger X}_2 = \norm{Q_X(J,:)}_2.
    \end{equation*}
\end{proof}
\begin{theorem} \label{thm:CURBA}
        Under Assumption \ref{assumptionBA},
    \begin{align} 
        \norm{A-A_{I\cup I_0,J\cup J_0}^{(BA)}} &\leq \norm{Q_X(J \cup J_0,:)^\dagger}_2 \norm{A-AX^\dagger X} \label{eq:CURBAbound} \\  & \hspace{1cm} + \norm{Q_Y(I\cup I_0,:)^\dagger}_2 \norm{A-YY^\dagger A} \nonumber
    \end{align} where $\norm{\cdot}$ is any unitarily invariant norm and $Q_X\in \R^{n\times k}$ and $Q_Y\in \R^{m\times k}$ are the orthonormal matrices spanning the columns of $X^T$ and $Y$, respectively.
\end{theorem}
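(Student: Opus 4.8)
The plan is to reduce the statement to two applications of Lemma~\ref{lemma:BAO} — one for $A$ and one for $A^T$ — joined by a standard two-sided projector splitting. First I would record that, under Assumption~\ref{assumptionBA}, the matrices $C := A(:,J\cup J_0)$ and $R := A(I\cup I_0,:)$ have full column rank and full row rank respectively, so $C^\dagger C = I$ and $RR^\dagger = I$; consequently
\begin{equation*}
    A_{I\cup I_0,J\cup J_0}^{(BA)} = C\bigl(C^\dagger A R^\dagger\bigr)R = (CC^\dagger)\,A\,(R^\dagger R),
\end{equation*}
so the CURBA is the two-sided orthogonal projection $P_C A P_R$ with $P_C = CC^\dagger$ and $P_R = R^\dagger R$.

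Next I would telescope,
\begin{equation*}
    A - P_C A P_R = (I - P_C)A + P_C\bigl(A - A P_R\bigr) = \bigl(A - CC^\dagger A\bigr) + CC^\dagger\bigl(A - A R^\dagger R\bigr),
\end{equation*}
and use the triangle inequality together with $\norm{CC^\dagger}_2 = 1$ (it is a nonzero orthogonal projector) to obtain $\norm{A - A_{I\cup I_0,J\cup J_0}^{(BA)}} \leq \norm{A - CC^\dagger A} + \norm{A - A R^\dagger R}$. The first term is exactly what Lemma~\ref{lemma:BAO} bounds, with column set $J\cup J_0$ and row-space approximator $X$, giving $\norm{A-CC^\dagger A}\le\norm{Q_X(J\cup J_0,:)^\dagger}_2\,\norm{A-AX^\dagger X}$. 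For the second term I would pass to the transpose: since $\norm{\cdot}$ is transpose-invariant and $(AR^\dagger R)^T = R^T(R^T)^\dagger A^T$ with $R^T = A^T(:,I\cup I_0)$ a column submatrix of $A^T$, applying Lemma~\ref{lemma:BAO} to $A^T$ with ``column'' set $I\cup I_0$ and row-space approximator $Y^T$ of $A^T$ (so that $Y$ plays the role of the spanning orthonormal-range matrix, i.e.\ $Q_Y$) yields
\begin{align*}
    \norm{A - A R^\dagger R} &= \norm{A^T - R^T(R^T)^\dagger A^T} \\
    &\leq \norm{Q_Y(I\cup I_0,:)^\dagger}_2\,\norm{A^T - A^T(Y^T)^\dagger Y^T} \\
    &= \norm{Q_Y(I\cup I_0,:)^\dagger}_2\,\norm{A - YY^\dagger A}.
\end{align*}
Adding the two bounds gives~\eqref{eq:CURBAbound}.

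I do not expect a genuine obstacle here; the analytic content is entirely in Lemma~\ref{lemma:BAO}, and the theorem is essentially bookkeeping. The only points that need a little care are (i) verifying that Assumption~\ref{assumptionBA} really supplies the full-rank hypotheses Lemma~\ref{lemma:BAO} requires in \emph{both} the $A$- and $A^T$-instantiations — in particular that $X(:,J\cup J_0)$ and $Y(I\cup I_0,:)$ have the needed full rank, which follows from the stated rank conditions on the non-oversampled submatrices since appending columns or rows cannot decrease rank — and (ii) tracking which orthonormal factor ($Q_X$ or $Q_Y$) spans which space after transposition, so that the pseudoinverses in the final bound act on the correct submatrices $Q_X(J\cup J_0,:)$ and $Q_Y(I\cup I_0,:)$.
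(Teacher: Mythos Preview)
Your proposal is correct and follows essentially the same route as the paper: telescope $A-CC^\dagger A R^\dagger R$ into $(A-CC^\dagger A)+CC^\dagger(A-AR^\dagger R)$, use $\norm{CC^\dagger}_2=1$, and invoke Lemma~\ref{lemma:BAO} once directly and once on $A^T$. Your extra remarks on checking the full-rank hypotheses after oversampling and on tracking the $Q_X$/$Q_Y$ bookkeeping are sound but not needed beyond what the paper assumes.
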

\begin{proof}
    The proof follows by Lemma \ref{lemma:BAO} and noting that 
    \begin{align*}
        \norm{A-CC^\dagger A R^\dagger R} &\leq \norm{A-CC^\dagger A} +\norm{CC^\dagger A-CC^\dagger AR^\dagger R} \\
        &\leq \norm{A-CC^\dagger A} +\norm{CC^\dagger}_2\norm{A-AR^\dagger R} \\
        &= \norm{A-CC^\dagger A} +\norm{A-AR^\dagger R} \\
        &\leq \norm{Q_X(J,:)^\dagger}_2 \norm{A-AX^\dagger X} + \norm{Q_Y(I,:)^\dagger}_2 \norm{A-YY^\dagger A}
    \end{align*} where the inequality for the second term $\norm{A-AR^\dagger R}$ in the final line can be shown by considering $A^T$ in Lemma \ref{lemma:BAO}.
\end{proof}
\begin{remark}\* \label{remark1}
    \begin{enumerate}
        \item The difference between Theorem \ref{thm:CURBA} and its counterpart without oversampling, e.g. \cite[Theorem 1]{DongMartinsson2023}, are the terms $\norm{Q_X(J,:)^\dagger}_2$ and $\norm{Q_Y(I,:)^\dagger}_2$ where instead of the matrix inverse, we have the pseudoinverse. This tightens the bound \eqref{eq:CURBAbound} as 
        \begin{equation*}
            \norm{Q_X(J\cup J_0,:)^\dagger}_2 \leq \norm{Q_X(J,:)^{-1}}_2,
        \end{equation*} and
        \begin{equation*}
            \norm{Q_Y(I\cup I_0,:)^\dagger}_2 \leq \norm{Q_Y(I,:)^{-1}}_2,
        \end{equation*}
        where $I_0$ and $J_0$ are the extra indices for oversampling.
        \item There are many possible choices for $X$ and $Y$. For example, if we choose them to be the dominant right and left singular vectors of $A$, then we get the bound similar to the one in \cite{SorensenEmbree2016} involving the best rank-$k$ approximation since $\norm{A-AX^\dagger X} = \norm{A-YY^\dagger A} = \norm{A-\lowrank{A}{k}}$ where $\lowrank{A}{k}$ is the best rank-$k$ approximation to $A$. If we choose $X$ and $Y$ to be the row sketch and the column sketch (see Algorithm \ref{alg:pivsketch}) then we involve the randomized SVD error \cite{hmt}. Choosing $X$ and $Y$ to be the dominant singular vectors in Theorem \ref{thm:CURBA} can be the optimal choice, but when $X$ and $Y$ are approximators, which can be computed easily, the bound \eqref{eq:CURBAbound} can be computed a posteriori.
        \item When $k+\min\{p_1,p_2\} \geq \rk(A)$ and $\rk(C) = \rk(R) = \rk(A)$, we have $A = CC^\dagger AR^\dagger R$ \cite{HammHuang2020}.
    \end{enumerate}
\end{remark}

Theorem \ref{thm:CURBA} suggests that oversampling helps to improve the accuracy of the CUR decomposition $A_{IJ}^{(BA)}$. However, the numerical simulations shown below illustrate that the improvement is not too significant. 
\begin{figure}[!ht]
\hspace*{-0.5cm}
\subfloat[\centering \texttt{CIFAR10} dataset with pivoting on a random sketch with row oversampling.]{\label{subfig:CIFARBA}\includegraphics[scale = 0.32]{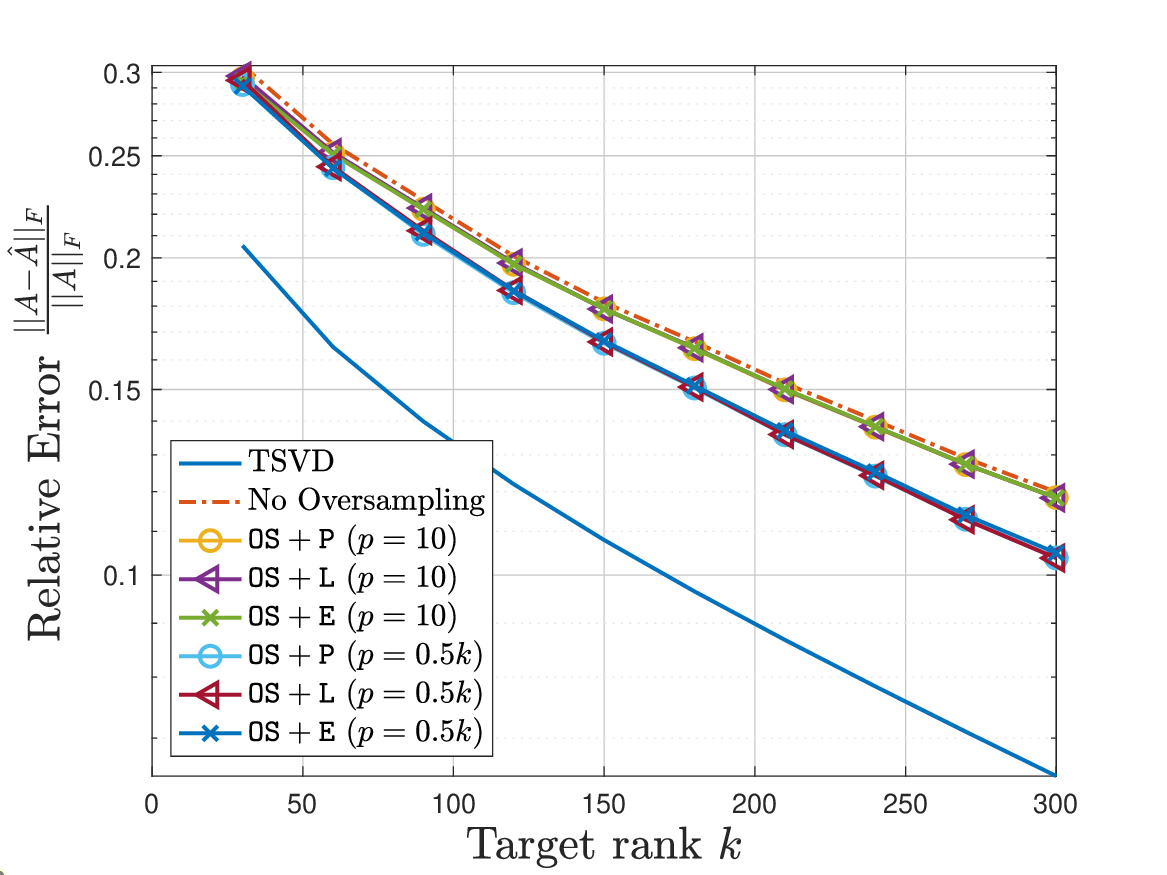}}
\subfloat[\centering \texttt{YaleFace64x64} dataset with pivoting on a random sketch with row $(p=0.5k)$ and column $(p = 10)$ oversampling.]{\label{subfig:YaleBA}\includegraphics[scale = 0.32]{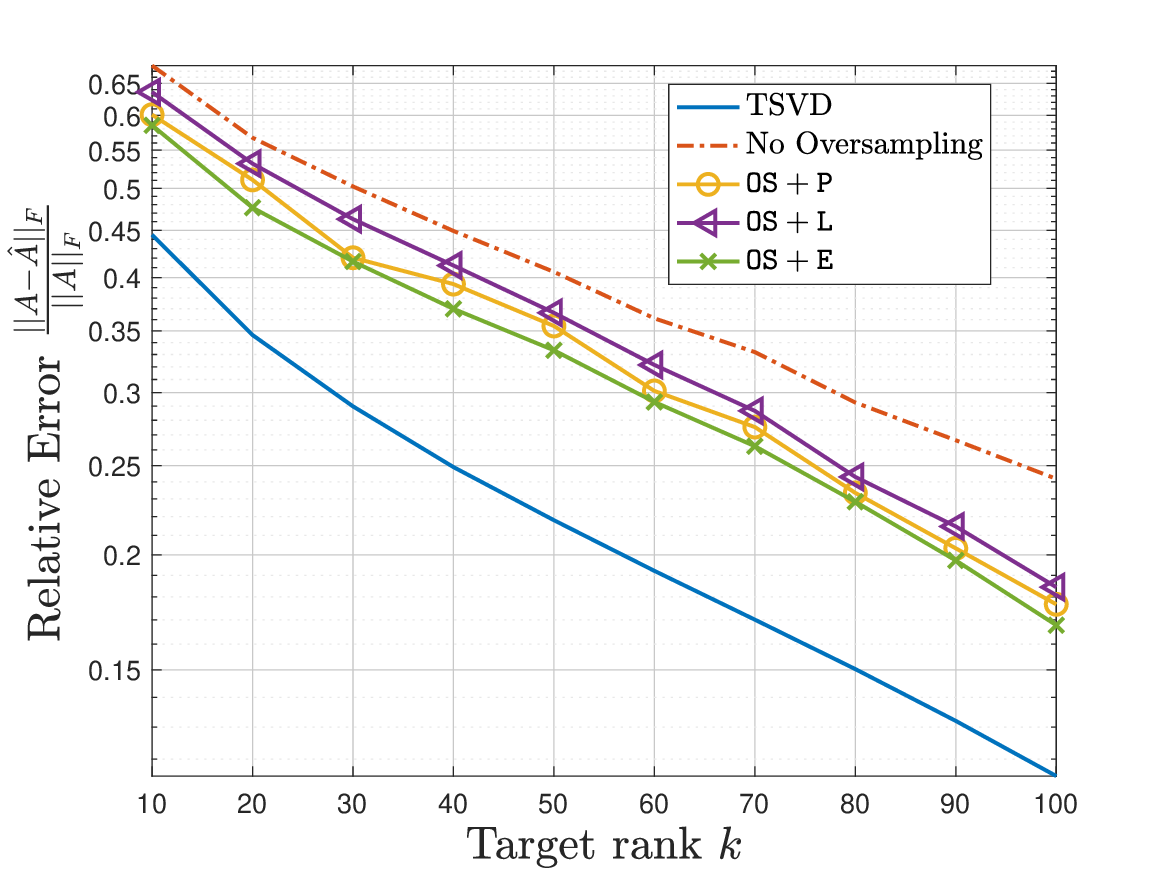}} 
\centering 
\caption{The effect of oversampling for the CURBA. We use pivoting on a random sketch (Algorithm \ref{alg:pivsketch}) to obtain the initial set of indices. Then we oversample the row indices using the three oversampling algorithms, $\mathtt{OS\!+\!P}$, $\mathtt{OS\!+\!L}$ and $\mathtt{OS\!+\!E}$. We also oversample column indices in Figure \ref{subfig:YaleBA}.}
\label{fig:CURBA}
\end{figure}

The numerical results are depicted in Figure \ref{fig:CURBA}, illustrating the effect of oversampling for the CURBA. We use pivoting on a random sketch (Algorithm \ref{alg:pivsketch}) to obtain the initial set of indices. Then we oversample the row indices using the three oversampling algorithms, $\mathtt{OS\!+\!P}$, $\mathtt{OS\!+\!L}$ and $\mathtt{OS\!+\!E}$ as in Section \ref{subsec:oversampleNI}. We also oversample column indices in Figure \ref{subfig:YaleBA}.

We start with the \texttt{CIFAR10} dataset in Figure \ref{subfig:CIFARBA}. We first observe that as the oversampling parameter increases, the accuracy improves and the different oversampling techniques yield a similar result. However, there is no significant improvement when oversampling is used for the CURBA. This shows that the CURBA gives a robust approximation and oversampling only plays a slight role in its accuracy. In Figure \ref{subfig:YaleBA}, we used the \texttt{YaleFace64x64} dataset to illustrate the effect of oversampling both rows and columns. We observe that when we oversample both rows and columns, we obtain a higher accuracy for the CURBA. This is expected as we are enlarging the subspace that we project onto $A$, which increases the accuracy of the CURBA. This is contrary to the CURCA, as the CURCA can become more inaccurate and unstable when we oversample both rows and columns; see Section \ref{subsec:oversampleNI}.

To conclude, while oversampling improves the accuracy of the CURBA, it has less significant effect on its accuracy and stability. 

\section{Stability of rank-deficient systems} \label{app:stab}
In this section, we prove the stability result for solving rank-deficient underdetermined linear systems (Theorem \ref{thm:rankdef}). The result can also be extended to rank-deficient overdetermined problems; see Appendix \ref{subsec:Overdetermined}. Theorem \ref{thm:rankdef} was used to derive an expression for the computed solution to
\begin{equation}
    \min_x\norm{(\hat{U}^T)_\epsilon x-[\hat{C}]_i^T}_2
\end{equation} for each row of $\hat{C}$. The statement and the proof is shown below. The proof follows a similar method outlined in \cite[Section 4.1]{Nakatsukasa2020}.

\begin{theorem}[Theorem \ref{thm:rankdef}]
    Consider the (rank-deficient) underdetermined linear system,
    \begin{equation} \tag{\ref{eq:rankdefeq}}
        \min_{x'} \norm{B_\epsilon x'-b}_2 
    \end{equation} where $B_\epsilon \in \R^{m\times n}$ $(m\leq n)$ is (possibly) rank-deficient $(\rk(B_\epsilon)\leq m)$ with singular vales larger than $\epsilon$ and $b\in \R^{m}$. Then assuming $\epsilon > \gamma\norm{B_\epsilon}_2$, the minimum norm solution to \eqref{eq:rankdefeq} can be computed in a backward stable manner, i.e., the computed solution $\hat{s}$ satisfies
    \begin{equation*}
        \hat{s} = \left( B_\epsilon + E_1 \right)^\dagger \left(b+E_2\right)
    \end{equation*} where $\norm{E_1}_2 \leq \gamma \norm{B_\epsilon}_2$ and $\norm{E_2}_2\leq \gamma\norm{b}_2$.
\end{theorem}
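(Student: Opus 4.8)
The plan is to exhibit a concrete SVD-based implementation of the minimum-norm solver and carry out a backward error analysis for it; the argument is then a routine adaptation of the standard stability analysis for (rank-deficient) underdetermined least squares, along the lines of \cite[Section 4.1]{Nakatsukasa2020}. First I would fix the algorithm: compute the SVD $\hat{W}\hat{\Sigma}\hat{V}^T$ of $B_\epsilon$ in floating point, discard the part of the computed spectrum below the threshold to obtain truncated factors $\hat{W}_1\in\R^{m\times r}$, $\hat{\Sigma}_1\in\R^{r\times r}$, $\hat{V}_1\in\R^{n\times r}$ with $r=\rk(B_\epsilon)$, and then form $\hat{s}=\fl\!\big(\hat{V}_1\,\fl(\hat{\Sigma}_1^{-1}\,\fl(\hat{W}_1^T b))\big)$ in that order. (A rank-revealing QR / complete orthogonal decomposition could be used instead and analysed the same way.) Since every nonzero singular value of $B_\epsilon$ exceeds $\epsilon>\gamma\norm{B_\epsilon}_2$, applying Weyl's inequality to the backward-stable computed SVD gives a clean gap in the computed spectrum — retained values exceed $\epsilon-\gamma\norm{B_\epsilon}_2>0$, discarded ones are $\bigO(\gamma\norm{B_\epsilon}_2)\ll\epsilon$ — so the numerical rank is identified correctly as $r$, and $\hat\Sigma_1$ (and the perturbed diagonals below) are invertible.

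The heart of the analysis is three standard ingredients, applied in turn. (i) Backward stability of the SVD (e.g.\ \cite[Ch.~20]{HighamStabilityBook}): there exist exactly orthonormal $\bar{W}_1,\bar{V}_1$ within $\bigO(\gamma)$ of the computed factors such that $B_\epsilon+\Delta_S=\bar{W}_1\hat{\Sigma}_1\bar{V}_1^T$ with $\norm{\Delta_S}_2\le\gamma\norm{B_\epsilon}_2$. (ii) Componentwise backward error for the two matrix--vector products \cite[Section 3.5]{HighamStabilityBook}: $\fl(\hat{W}_1^T b)=(\hat{W}_1+\Delta_2)^T b$ and $\fl(\hat{V}_1 z)=(\hat{V}_1+\Delta_3)z$ with $\norm{\Delta_2}_2,\norm{\Delta_3}_2\le\gamma$; the error of the first product may equivalently be charged to the right-hand side, writing $\fl(\hat W_1^T b)=\bar W_1^T(b+E_2)$ with $\norm{E_2}_2\le\gamma\norm{b}_2$, which is what produces the $E_2$ term in the statement. (iii) The diagonal solve is exact up to a componentwise relative perturbation, $\fl(\hat{\Sigma}_1^{-1}y)=\tilde{\Sigma}_1^{-1}y$ with $\tilde{\Sigma}_1=\hat{\Sigma}_1(I+D)$, $\norm{D}_2\le\gamma$, hence $\norm{\tilde{\Sigma}_1-\hat{\Sigma}_1}_2\le\gamma\norm{B_\epsilon}_2$ — crucially this is a perturbation of $\Sigma_1$ itself, not an $\bigO(1/\epsilon)$-amplified perturbation of anything. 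Concatenating the three steps yields $\hat{s}=(\bar{V}_1+\Delta V')\tilde{\Sigma}_1^{-1}(\bar{W}_1+\Delta W')^T(b+E_2)$ with $\norm{\Delta V'}_2,\norm{\Delta W'}_2\le\gamma$.

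The final step is to recognize the right-hand side as $(B_\epsilon+E_1)^\dagger(b+E_2)$. I would re-orthonormalize the near-isometric perturbed factors through their polar factors, $\bar{W}_1+\Delta W'=\check{W}_1(I+F_W)$ and $\bar{V}_1+\Delta V'=\check{V}_1(I+F_V)$ with $\check{W}_1,\check{V}_1$ exactly orthonormal and $\norm{F_W}_2,\norm{F_V}_2\le\gamma$. Then $(\bar{V}_1+\Delta V')\tilde{\Sigma}_1^{-1}(\bar{W}_1+\Delta W')^T=\check{V}_1 M^{-1}\check{W}_1^T=(\check{W}_1 M\check{V}_1^T)^\dagger$, where $M:=(I+F_W)^{-T}\tilde{\Sigma}_1(I+F_V)^{-1}$ satisfies $\norm{M-\tilde{\Sigma}_1}_2\le\gamma\norm{B_\epsilon}_2$ (expand $(I+F)^{-1}=I-F+\bigO(\gamma^2)$). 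Setting $E_1:=\check{W}_1 M\check{V}_1^T-B_\epsilon$ and collecting the pieces — $\Delta_S$, the $\bigO(\gamma)$ factor perturbations $\check W_1-\bar W_1$ and $\check V_1-\bar V_1$ each scaled by at most $\norm{\hat\Sigma_1}_2=\norm{B_\epsilon}_2(1+\bigO(\gamma))$, and $M-\hat\Sigma_1$ — gives $\norm{E_1}_2\le\gamma\norm{B_\epsilon}_2$, which together with the $E_2$ from (ii) is exactly the claimed form.

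The step I expect to be the main obstacle is precisely this last bookkeeping: ensuring that no rounding error generates a term amplified by $1/\epsilon$ (or by the condition number $\norm{B_\epsilon}_2/\epsilon$) in $E_1$ or $E_2$. The two places where such amplification threatens are the division by $\hat{\Sigma}_1$ and the non-orthonormality of the computed singular vectors; both are defused by the same device — the former is absorbed as a relative perturbation of the diagonal (cost $\gamma\norm{B_\epsilon}_2$), and the latter is absorbed by re-orthonormalizing and folding the $\bigO(\gamma)$ correction into the core factor $M$ (again cost $\gamma\norm{B_\epsilon}_2$, because $E_1$ is measured against $\norm{B_\epsilon}_2$, not $\epsilon$), rather than letting it hit $\tilde{\Sigma}_1^{-1}$ from either side. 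The hypothesis $\epsilon>\gamma\norm{B_\epsilon}_2$ is used only for the clean rank separation and for invertibility of $\hat{\Sigma}_1$ and $\tilde{\Sigma}_1$; it does not otherwise enter the error bounds.
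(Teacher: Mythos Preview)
Your argument is correct, but the route differs from the paper's. You fix a concrete SVD-based solver and track rounding errors through each stage (SVD, projection, diagonal solve, back-multiply), then absorb the accumulated perturbations into $E_1$ via a polar re-orthonormalisation of the near-isometric computed singular vectors. The paper instead reduces the rank-deficient problem to a full-rank one: it computes an orthonormal basis $\hat Q_\epsilon$ for the column space of $B_\epsilon$, forms the full-row-rank underdetermined system $\min_{x'}\norm{(\hat Q_\epsilon^T B_\epsilon)x'-\hat Q_\epsilon^T b}_2$, invokes the off-the-shelf backward stability result for full-rank underdetermined least squares \cite[Theorem~21.4]{HighamStabilityBook} as a black box, and then lifts back via the identity $(QA)^\dagger=A^\dagger Q^T$ for orthonormal $Q$ together with $\hat Q_\epsilon\hat Q_\epsilon^T B_\epsilon=B_\epsilon+E$, $\norm{E}_2\le\gamma\norm{B_\epsilon}_2$. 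The paper's proof is shorter and algorithm-agnostic (any backward-stable full-rank underdetermined solver will do once the column-space projection is in hand), whereas your argument is more self-contained and makes explicit exactly where the hypothesis $\epsilon>\gamma\norm{B_\epsilon}_2$ prevents a $1/\epsilon$ blow-up --- namely in the clean rank separation and in ensuring the diagonal and core factors stay invertible so every rounding error can be charged as a \emph{relative} perturbation of a factor of size $\norm{B_\epsilon}_2$. Both approaches end up using $\epsilon>\gamma\norm{B_\epsilon}_2$ in essentially the same way.
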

\begin{proof}
    First, if $B_\epsilon$ has full row-rank then the statement follows by the stability result in \cite[Theorem 21.4]{HighamStabilityBook}. If $B_\epsilon$ is rank-deficient,
    the stability result in \cite[Theorem 21.4]{HighamStabilityBook} cannot be invoked as it is only applicable for underdetermined full-rank linear systems. So we project $B_\epsilon$ onto its column space $Q_\epsilon$ to make the problem an underdetermined full-rank linear system:
    \begin{equation*}
        \min_{x'}\norm{Q_\epsilon^TB_{\epsilon}x'-Q_\epsilon^Tb}_2.
    \end{equation*}

    Let $\hat{Q}_\epsilon $ be the computed column space of $B_\epsilon$. Then $\hat{Q}_\epsilon$ is the exact column space of $B_\epsilon + \Delta B$ where $\norm{\Delta B}_2 \leq \gamma \norm{B_\epsilon}_2$, $\hat{Q}_\epsilon^TB_\epsilon$ is numerically full-rank with singular values larger than $\epsilon$ and $\hat{Q}_\epsilon \hat{Q}_\epsilon^TB_\epsilon = B_\epsilon+ E$ where $\norm{E}_2 \leq \gamma\norm{B_\epsilon}_2$ \cite[Chapter 5.4.1]{GolubVanLoan2013}. Note the following from matrix-matrix (or matrix-vector) multiplication \cite[Section 3.5]{HighamStabilityBook},
\begin{equation*}
    fl(\hat{Q}_\epsilon^T B_\epsilon) = \hat{Q}_\epsilon^TB_\epsilon + E^{(1)}
\end{equation*} where $\norm{E^{(1)}}_2\leq \gamma\norm{B_\epsilon}_2$ and
\begin{equation*}
    fl(\hat{Q}_\epsilon^T b) = \hat{Q}_\epsilon^Tb + E^{(2)}
\end{equation*} where $\norm{E^{(2)}}_2\leq \gamma\norm{b}_2$. Since $\hat{Q_\epsilon^TB_\epsilon} \in \R^{\rk(B_\epsilon) \times n}$ is a fat rectangular matrix, we solve the following underdetermined full-rank linear system,
\begin{equation} \label{eq:udprobmod}
   \min_{x'} \norm{(\hat{Q}_\epsilon^TB_\epsilon)x'-\hat{Q}_\epsilon^Tb}_2.
\end{equation}
Assuming $\gamma\kappa_2(B_\epsilon) \leq \gamma\norm{B_\epsilon}_2/\epsilon  <1$ (where $\kappa_2$ denotes the 2-norm condition number), we obtain the computed solution of \eqref{eq:udprobmod}, $\hat{s}$, satisfying \cite[Theorem 21.4]{HighamStabilityBook},
\begin{align*}
    \hat{s} = fl\left(\left(\hat{Q}_\epsilon^TB_\epsilon\right)^\dagger \hat{Q}_\epsilon^Tb \right) &= \left(fl(\hat{Q}_\epsilon^TB_\epsilon)+E^{(3)}\right)^\dagger fl(\hat{Q}_\epsilon^Tb) \\
    &= \left(\hat{Q}_\epsilon^T B_\epsilon + E^{(1)}+E^{(3)}\right)^\dagger \left(\hat{Q}_\epsilon^Tb+E^{(2)}\right) \\
    &= \left(\hat{Q}_\epsilon \hat{Q}_{\epsilon}^TB_\epsilon + \hat{Q}_\epsilon E^{(1)}+\hat{Q}_\epsilon E^{(3)}\right)^\dagger \left(b+\hat{Q}_\epsilon E^{(2)}\right) \\
    &= \left(B_\epsilon + E + \hat{Q}_\epsilon E^{(1)}+\hat{Q}_\epsilon E^{(3)}\right)^\dagger \left(b+\hat{Q}_\epsilon E^{(2)}\right)
\end{align*} where $\norm{E^{(3)}}_2 \leq \gamma\norm{B_{\epsilon}}_2$ and we use $(QA)^\dagger b = A^\dagger Q^Tb$ for a tall-skinny orthonormal matrix $Q$ in line $3$. Therefore, assuming that $\gamma \norm{B_\epsilon}_2 < \epsilon$, the computed solution of \eqref{eq:rankdefeq}, $\hat{s}$ satisfies
\begin{equation} \label{eq:lsnumstab}
    \hat{s} = fl(B_\epsilon^\dagger b) = \left(B_\epsilon + E_1\right)^\dagger \left(b+E_2 \right),
\end{equation} where $\norm{E_1}_2 \leq \gamma \norm{B_\epsilon}_2$ and $\norm{E_2}_2\leq \gamma \norm{b}_2$.
\end{proof}

\subsection{Extension to rank-deficient overdetermined problems} \label{subsec:Overdetermined}
Theorem \ref{thm:rankdef} can be extended to include rank-deficient overdetermined least-sqaures problems. Consider the rank-deficient overdetermined least-squares problem,
\begin{equation*} 
    \min_{x'} \norm{C_\epsilon x'-b}_2 
\end{equation*} where $C_\epsilon \in \R^{m\times n}$ $(m\geq n)$ is a rank-deficient $(\rk(C_\epsilon)< n)$, tall-skinny matrix with singular vales larger than $\epsilon$ and $b\in \R^{m}$. Then under the same assumption as Theorem \ref{thm:rankdef}, i.e., $\epsilon > \gamma \norm{B_\epsilon}_2$, the solution to the overdetermined least-squares problem can be computed in a backward stable manner.

The proof proceeds in the same way as Theorem \ref{thm:rankdef}, as once the tall-skinny matrix $C_\epsilon$ is projected onto its column space $W_\epsilon$, $W_\epsilon^T C_\epsilon \in \R^{\rk(C_\epsilon)\times n}$ becomes a fat full-rank matrix, and we are now solving the underdetermined full-rank linear system: $\min\limits_{x'} \norm{(\hat{W}_\epsilon^TC_\epsilon)x'-\hat{W}_\epsilon^Tb}_2$.

\section{Two lemmas} \label{app:lemmas}
We give the proofs for the two lemmas, Lemma \ref{lemma:NB1} and Lemma \ref{lemma:NB2} in this section. 
\begin{lemma}[Lemma \ref{lemma:NB1}]
    Under Assumption \ref{assumptions}, for any $\Delta C$ and $\Delta U$,
    \begin{equation}
        \norm{(C+\Delta C)(U+\Delta U)_\epsilon^\dagger}_2 \leq \norm{Q_C(I_*,:)^\dagger}_2 \left(1 + \frac{1}{\epsilon}\norm{\Delta U}_2 \right) + \frac{1}{\epsilon}\norm{\Delta C}_2.
    \end{equation}
\end{lemma}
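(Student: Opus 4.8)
The plan is to split off the $\Delta C$ contribution immediately and then work on the harder term $\norm{C(U+\Delta U)_\epsilon^\dagger}_2$. By the triangle inequality and submultiplicativity,
\begin{equation*}
    \norm{(C+\Delta C)(U+\Delta U)_\epsilon^\dagger}_2 \leq \norm{C(U+\Delta U)_\epsilon^\dagger}_2 + \norm{\Delta C}_2\,\norm{(U+\Delta U)_\epsilon^\dagger}_2,
\end{equation*}
and since the $\epsilon$-pseudoinverse discards all singular values below $\epsilon$, we have $\norm{(U+\Delta U)_\epsilon^\dagger}_2 \leq 1/\epsilon$, which already produces the last term $\tfrac1\epsilon\norm{\Delta C}_2$ of the claimed bound.

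The main step is to control $\norm{C(U+\Delta U)_\epsilon^\dagger}_2$. Here I would use that under Assumption~\ref{assumptions} the matrix $U = \Pi_{I_*}^T C$ has full column rank (it contains the nonsingular block $A(I,J)$), so $U^\dagger U = I_k$ and hence $C = C U^\dagger U$. Writing $U = (U+\Delta U) - \Delta U$ gives the exact identity
\begin{equation*}
    C(U+\Delta U)_\epsilon^\dagger = C U^\dagger (U+\Delta U)(U+\Delta U)_\epsilon^\dagger - C U^\dagger \Delta U\,(U+\Delta U)_\epsilon^\dagger.
\end{equation*}
Now $(U+\Delta U)(U+\Delta U)_\epsilon^\dagger$ equals $\tilde W_1\tilde W_1^T$, the orthogonal projector onto the left singular subspace of $U+\Delta U$ retained by the $\epsilon$-truncation (one checks this from the SVD of $U+\Delta U$), so its spectral norm is at most $1$; and $\norm{(U+\Delta U)_\epsilon^\dagger}_2\le 1/\epsilon$ as before. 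Taking norms therefore yields
\begin{equation*}
    \norm{C(U+\Delta U)_\epsilon^\dagger}_2 \leq \norm{C U^\dagger}_2\Bigl(1 + \tfrac1\epsilon\norm{\Delta U}_2\Bigr).
\end{equation*}

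Finally I would identify $\norm{C U^\dagger}_2$. Since $C U^\dagger = A\Pi_J(\Pi_{I_*}^T A\Pi_J)^\dagger = \proj_{A\Pi_J,\Pi_{I_*}}$, Lemma~\ref{lemma:obproj} gives $\norm{C U^\dagger}_2 = \norm{(\Pi_{I_*}^T Q_C)^\dagger \Pi_{I_*}^T}_2 = \norm{(\Pi_{I_*}^T Q_C)^\dagger}_2 = \norm{Q_C(I_*,:)^\dagger}_2$, where the middle equality uses $\Pi_{I_*}^T\Pi_{I_*} = I$. Substituting and combining with the $\Delta C$ term gives exactly the stated inequality. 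I do not expect any real obstacle here; the only thing to get right is the full-column-rank observation licensing $C = CU^\dagger U$ and the verification that $(U+\Delta U)(U+\Delta U)_\epsilon^\dagger$ is a (norm-$\le 1$) orthogonal projector rather than the identity — the rest is bookkeeping with submultiplicativity.
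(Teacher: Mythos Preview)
Your argument is correct and follows essentially the paper's route: split off $\Delta C$, reduce $\norm{C(U+\Delta U)_\epsilon^\dagger}_2$ to $\norm{Q_C(I_*,:)^\dagger}_2\,\norm{U(U+\Delta U)_\epsilon^\dagger}_2$, and bound the latter by $1+\norm{\Delta U}_2/\epsilon$ via $(U+\Delta U)(U+\Delta U)_\epsilon^\dagger=\tilde W_1\tilde W_1^T$. One cosmetic slip: $CU^\dagger$ is not literally the projector $\proj_{A\Pi_J,\Pi_{I_*}}$ (it lacks the trailing $\Pi_{I_*}^T$), but since $\Pi_{I_*}$ has orthonormal columns the two have the same spectral norm, so your conclusion $\norm{CU^\dagger}_2=\norm{Q_C(I_*,:)^\dagger}_2$ stands.
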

\begin{proof}
    Let $C = Q_C R_C $ be the thin QR factorization of $C$. Then
    \begin{align*}
        \norm{(C+\Delta C)(U+\Delta U)_\epsilon^\dagger}_2 &\leq \norm{Q_CR_C(U+\Delta U)_\epsilon^\dagger}_2 + \norm{\Delta C (U+\Delta U)_\epsilon^\dagger}_2 \\
        &= \norm{R_C(U+\Delta U)_\epsilon^\dagger}_2 + \norm{\Delta C (U+\Delta U)_\epsilon^\dagger}_2 \\
        &= \norm{(\Pi_{I_*}^T Q_C)^\dagger \Pi_{I_*}^T Q_C R_C(U+\Delta U)_\epsilon^\dagger}_2 +  \norm{\Delta C (U+\Delta U)_\epsilon^\dagger}_2 \\
        &\leq \norm{(\Pi_{I_*}^T Q_C)^\dagger}_2 \norm{U (U+\Delta U)_\epsilon^\dagger}_2 + \frac{1}{\epsilon}\norm{\Delta C}_2
    \end{align*} where we use $(\Pi_{I_*}^T Q_C)^\dagger \Pi_{I_*}^T Q_C = I$ for the penultimate line.
    The result follows by noting that $\norm{U (U+\Delta U)_\epsilon^\dagger}_2$ simplifies to 
    \begin{align*}
    \norm{U (U+\Delta U)_\epsilon^\dagger}_2 = \norm{(U+\Delta U) (U+\Delta U)_\epsilon^\dagger - \Delta U (U+\Delta U)_\epsilon^\dagger}_2 \leq  1 + \frac{1}{\epsilon}\norm{\Delta U}_2.
    \end{align*}
\end{proof}

\begin{lemma}[Lemma \ref{lemma:NB2}]
    Under Assumption \ref{assumptions}, for any $\Delta C$ and $\Delta U$,
    \begin{equation}
        (C+\Delta C)(U+\Delta U)_\epsilon^\dagger R\Pi_J = C + E_*
    \end{equation} where 
    \begin{align*}
         \norm{E_*}_2 \leq \norm{(Q_C(I_*,:)^\dagger}_2 \left(\epsilon  + 2\norm{\Delta U}_2 + \frac{1}{\epsilon}\norm{\Delta U}_2^2\right) +  \norm{\Delta C}_2 \left(1+\frac{\norm{\Delta U}_2}{\epsilon}\right).
     \end{align*}
\end{lemma}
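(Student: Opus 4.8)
The plan is to mirror the proof of Theorem~\ref{thm:CAe}, after first noting that $R\Pi_J = A(I_*,:)\Pi_J = A(I_*,J) = U$, so the left-hand side is simply $(C+\Delta C)(U+\Delta U)_\epsilon^\dagger U$. I would take the thin SVD $U+\Delta U = [W_1,W_2]\diag(\Sigma_1,\Sigma_2)[V_1,V_2]^T$ with $\Sigma_2$ collecting the singular values of $U+\Delta U$ smaller than $\epsilon$, exactly as in Theorem~\ref{thm:CAe}. Then $(U+\Delta U)_\epsilon^\dagger(U+\Delta U) = V_1V_1^T = I-V_2V_2^T$, and writing $U = (U+\Delta U)-\Delta U$ gives
\[
(U+\Delta U)_\epsilon^\dagger U = I - V_2V_2^T - (U+\Delta U)_\epsilon^\dagger \Delta U .
\]
Left-multiplying by $C+\Delta C$ and subtracting $C$ identifies the error exactly as
\[
E_* = \Delta C - (C+\Delta C)V_2V_2^T - (C+\Delta C)(U+\Delta U)_\epsilon^\dagger \Delta U ,
\]
so the task reduces to bounding these three terms in the spectral norm.

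For the first two terms I would regroup, using $I-V_2V_2^T = V_1V_1^T$, as $\Delta C(I-V_2V_2^T) - CV_2V_2^T = \Delta C V_1V_1^T - CV_2V_2^T$; this regrouping is the small but essential bookkeeping step that keeps the coefficient of $\norm{\Delta C}_2$ equal to $1$ rather than $2$. Since $V_1$ has orthonormal columns, $\norm{\Delta C V_1V_1^T}_2 \le \norm{\Delta C}_2$. For $\norm{CV_2V_2^T}_2 = \norm{CV_2}_2$ I would reuse the QR/oblique-projector argument from the proof of Theorem~\ref{thm:CAe}: writing $C = Q_CR_C$ for the thin QR factorization, using $U = \Pi_{I_*}^TC$, and using $(\Pi_{I_*}^TQ_C)^\dagger\Pi_{I_*}^TQ_C = I$ (which holds since $Q_C(I_*,:)$ has full column rank under Assumption~\ref{assumptions}), one obtains $CV_2 = Q_C(\Pi_{I_*}^TQ_C)^\dagger U V_2$, hence $\norm{CV_2}_2 \le \norm{Q_C(I_*,:)^\dagger}_2\,\norm{U V_2}_2$. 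Finally $U V_2 = (U+\Delta U)V_2 - \Delta U V_2 = W_2\Sigma_2 - \Delta U V_2$, so $\norm{U V_2}_2 \le \norm{\Sigma_2}_2 + \norm{\Delta U}_2 \le \epsilon + \norm{\Delta U}_2$.

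For the third term I would factor $\norm{(C+\Delta C)(U+\Delta U)_\epsilon^\dagger \Delta U}_2 \le \norm{(C+\Delta C)(U+\Delta U)_\epsilon^\dagger}_2\,\norm{\Delta U}_2$ and invoke Lemma~\ref{lemma:NB1}, which bounds the first factor by $\norm{Q_C(I_*,:)^\dagger}_2\bigl(1+\frac{1}{\epsilon}\norm{\Delta U}_2\bigr)+\frac{1}{\epsilon}\norm{\Delta C}_2$. Adding the three bounds via the triangle inequality and collecting the terms proportional to $\norm{Q_C(I_*,:)^\dagger}_2$ (which give $\epsilon+2\norm{\Delta U}_2+\frac{1}{\epsilon}\norm{\Delta U}_2^2$) and those proportional to $\norm{\Delta C}_2$ (which give $1+\frac{1}{\epsilon}\norm{\Delta U}_2$) yields the claimed inequality. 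The only genuinely nontrivial step is the passage from $\norm{U V_2}_2$ to $\norm{CV_2}_2$ through the oblique-projector identity — precisely where the factor $\norm{Q_C(I_*,:)^\dagger}_2$ is produced — together with the regrouping of the $\Delta C$ contribution noted above; the rest is routine triangle-inequality and submultiplicativity estimates, just as in the proof of Lemma~\ref{lemma:NB1}.
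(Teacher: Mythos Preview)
Your proposal is correct and essentially identical to the paper's proof: both recognise $R\Pi_J=U$, take the thin SVD of $U+\Delta U$, use $(U+\Delta U)_\epsilon^\dagger U = I-V_2V_2^T-(U+\Delta U)_\epsilon^\dagger\Delta U$, bound $\norm{CV_2}_2$ via the oblique-projector identity $C = Q_C(\Pi_{I_*}^TQ_C)^\dagger U$, and invoke Lemma~\ref{lemma:NB1}. The only cosmetic difference is that the paper splits into $C(\cdot)$ and $\Delta C(\cdot)$ at the outset (applying Lemma~\ref{lemma:NB1} with zero $C$-perturbation in the first piece and bounding $\norm{\Delta C}_2\norm{(U+\Delta U)_\epsilon^\dagger U}_2$ directly in the second), whereas you keep $C+\Delta C$ together and regroup via $\Delta C V_1V_1^T - CV_2V_2^T$; the resulting bounds and their collection are the same.
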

\begin{proof}
    We divide the expression into two pieces:
    \begin{align*}
         (C+\Delta C)(U+\Delta U)_\epsilon^\dagger R\Pi_J  =  \underbrace{C(U+\Delta U)_\epsilon^\dagger R\Pi_J}_{(i)}+ \underbrace{\Delta C (U+\Delta U)_\epsilon^\dagger R \Pi_J}_{(ii)} 
    \end{align*} and treat them separately. Let the thin SVD of $U+\Delta U$ be 
    \begin{equation*}
        U+\Delta U = W\Sigma V^T = [W_1,W_2]\begin{bmatrix}
            \Sigma_1 & \\ & \Sigma_2
        \end{bmatrix}[V_1,V_2]^T
    \end{equation*}
     where $\Sigma_2$ contains the singular values of $U +\Delta U$ smaller than $\epsilon$ and $C = Q_CR_C$ be the thin QR decomposition of $C$. 
     
     We begin by examining the matrix (i).
     \begin{align*}
         C(U+\Delta U)_\epsilon^\dagger R\Pi_J &= C(U+\Delta U)_\epsilon^\dagger U = C(U+\Delta U)_\epsilon^\dagger (U+\Delta U) - C(U+\Delta U)_\epsilon^\dagger \Delta U \\
         &= CV_1V_1^T - C(U+\Delta U)_\epsilon^\dagger \Delta U = C - CV_2V_2^T - C(U+\Delta U)_\epsilon^\dagger \Delta U \\
         &= C + E_1
     \end{align*} where $E_1 = - CV_2V_2^T - C(U+\Delta U)_\epsilon^\dagger \Delta U$ satisfies
     \begin{align*}
         \norm{E_1}_2 &\leq \norm{CV_2V_2^T}_2+\norm{C(U+\Delta U)_\epsilon^\dagger \Delta U}_2 \\
         &\leq \norm{CV_2V_2^T}_2 + \norm{Q_C(I_*,:)^\dagger}_2 \left(1 + \frac{1}{\epsilon}\norm{\Delta U} \right)\norm{\Delta U}_2
     \end{align*} by Lemma \ref{lemma:NB1}. We bound $\norm{CV_2V_2^T}_2$ as in the final part of the proof of Theorem \ref{thm:CAe}.
     \begin{align*}
         \norm{CV_2V_2^T}_2 &= \norm{R_C V_2 V_2^T}_2 = \norm{(\Pi_{I_*}^TQ_C)^\dagger \Pi_{I_*}^TQ_C R_C V_2 V_2^T}_2 = \norm{(\Pi_{I_*}^TQ_C)^\dagger UV_2 V_2^T}_2 \\ &\leq \norm{(\Pi_{I_*}^TQ_C)^\dagger}_2 \norm{(U+\Delta U)V_2V_2^T-\Delta U V_2V_2^T}_2 \\
         &\leq \norm{(\Pi_{I_*}^TQ_C)^\dagger}_2 \left(\norm{W_2\Sigma_2}_2 + \norm{\Delta U}_2 \right) \\
         &\leq \norm{(\Pi_{I_*}^TQ_C)^\dagger}_2 \left(\epsilon  + \norm{\Delta U}_2 \right).
     \end{align*} Therefore, (i) can be bounded as 
     \begin{equation*}
         C(U+\Delta U)_\epsilon^\dagger R\Pi_J = C + E_1
     \end{equation*} where $\norm{E_1}_2 \leq \norm{(Q_C(I_*,:)^\dagger}_2 \left(\epsilon  + 2\norm{\Delta U}_2 + \frac{1}{\epsilon}\norm{\Delta U}_2^2\right)$. 

     Next, we bound the matrix (ii). Let $E_2 = \Delta C (U+\Delta U)_\epsilon^\dagger R \Pi_J$, then
     \begin{align*}
         \norm{E_2}_2 &= \norm{\Delta C (U+\Delta U)_\epsilon^\dagger R \Pi_J}_2 \\
         &\leq \norm{\Delta C}_2 \norm{(U+\Delta U)_\epsilon^\dagger U}_2 \\
         &\leq \norm{\Delta C}_2 \left( \norm{(U+\Delta U)_\epsilon^\dagger (U+\Delta U)}_2 + \norm{(U+\Delta U)_\epsilon^\dagger \Delta U}_2 \right) \\
         &\leq \norm{\Delta C}_2 \left(1+\frac{\norm{\Delta U}_2}{\epsilon}\right).
     \end{align*}

     Putting everything together and setting $E_* = E_1 + E_2$, we get the desired result:
     \begin{equation*} 
         (C+\Delta C)(U+\Delta U)_\epsilon^\dagger R\Pi_J = C + E_*
     \end{equation*} where 
     \begin{align*}
         \norm{E_*}_2 \leq \norm{(Q_C(I_*,:)^\dagger}_2 \left(\epsilon  + 2\norm{\Delta U}_2 + \frac{1}{\epsilon}\norm{\Delta U}_2^2\right) +  \norm{\Delta C}_2 \left(1+\frac{\norm{\Delta U}_2}{\epsilon}\right).
     \end{align*}
\end{proof}

\end{document}